\theoremstyle{plain}
\newtheorem{theorem}{Theorem}
\newtheorem{corollary}[theorem]{Corollary}
\newtheorem{lemma}[theorem]{Lemma}
\theoremstyle{definition}
\newtheorem{definition}[theorem]{Definition}
\newtheorem{algo}{Algorithm}
\DeclareMathOperator{\tr}{tr}
\newcommand{\myheader}[1]{\medskip \noindent \textbf{#1.} }
\newcommand{\R}{\mathbb R}
\newcommand{\tf}{t_{\mathrm{f}}}
\newcommand{\myU}{U}
\newcommand{\trace}{\text{trace}}
\newcommand{\svdu}[1]{{\text{SVD}_{#1}}}
\newcommand{\psvdu}[1]{\hat{p}\text{-}\svdu{#1}}
\newcommand{\tsvdu}[1]{\theta\text{-}\svdu{#1}}
\title{Optimal Experimental Design for Universal Differential Equations}
\author{Christoph Plate$^{1,2}$ \and Carl Julius Martensen$^1$ \and Sebastian Sager$^1$}
\date{
	$^1$Otto von Guericke University Magdeburg \\ \texttt{\{christoph.plate, carl.martensen, sager\}@ovgu.de}\\%
	$^2$Max Planck Institute for Dynamics of Complex Technical Systems Magdeburg\\
	\texttt{\{plate\}@mpi-magdeburg.mpg.de}\\%
	\today
}
\begin{document}
\maketitle

\begin{abstract}
\noindent
Complex dynamic systems are typically either modeled using expert knowledge in the form of differential equations or via data-driven universal approximation models such as artificial neural networks (ANN).
While the first approach has advantages with respect to interpretability, transparency, data-efficiency, and extrapolation, the second approach is able to learn completely unknown functional relations from data and may result in models that can be evaluated more efficiently.
To combine the complementary advantages, universal differential equations (UDE) have been suggested. They replace unknown terms in the differential equations with ANN.
Such hybrid models allow to both encode prior domain knowledge, such as first principles, and to learn unknown mechanisms from data.
Often, data for the training of UDE can only be obtained via costly experiments.
We consider optimal experimental design (OED) for planning of experiments and generating data needed to train UDE.
The number of weights in the embedded ANN usually leads to an overfitting of the regression problem. 
To make the OED problem tractable for optimization, we propose and compare dimension reduction methods that are based on lumping of weights and singular value decomposition of the Fisher information matrix (FIM), respectively.
They result in lower-dimensional variational differential equations, which are easier to solve and yield regular FIM.
Our numerical results showcase the advantages of OED for UDE, such as increased data-efficiency and better extrapolation properties.
\end{abstract}




\section{Introduction}
\label{sec:intro}

\myheader{Model identification and usage}
The engineering sciences have been successfully using concepts from modeling, simulation, and optimization for many decades. 
Depending on particular applications, optimization has been used for different model-based design and control purposes \cite{vassiliadis2021optimization}.
The focus was either on descent-based nonlinear programming for single-objective optimization \cite{Biegler2010},
derivative-free algorithms for multi-objective optimization \cite{debBook},
stable and efficient control concepts \cite{Diehl2002b},
or on mixed-integer programming, e.g., in superstructure optimization \cite{mencarelli2020review}.
In most parts of the extensive literature on process optimization, the underlying mathematical models have been derived from domain knowledge and consisted of systems of differential equations for transient processes, or of algebraic equations for (assumed) steady states. Shortcomings in the modeling process have been accounted for by methods from uncertainty quantification and robust optimization.
The emergence of powerful data-driven universal approximation models is about to bring a revolution to the field, with many opportunities for research breakthroughs in the intersection of engineering and machine learning \cite{Schweidtmann2021}.
Designing mathematical models and training them has matured over the last decades. Especially for particular machine learning tasks, such as the training of large language models or image processing models, impressive improvements could be achieved. The situation is different, though, in the context of engineering, due to comparatively sparse and expensive-to-observe data and challenging multi-scale modeling tasks with unknown functional relations. 
In addition, engineering simulation and optimization tasks are typically computationally expensive. Surrogate models for process optimization are surveyed with respect to accuracy and computational effort in \cite{Misener2023}. 

Relevant for many processes with partially unknown mechanisms is thus model identification \cite{Brunton2019-wp,camps2023discovering}, the symbolic reconstruction of equations from data.
A number of different frameworks have been considered in the literature, including grammar-based methods, sparse regression, neural networks, Gaussian process regression, Bayesian approaches, and symbolic regression \cite{Ghadami2022-fj,Brunton2019-wp,Dzeroski2008-ja,camps2023discovering,metzcar2024review}. Symbolic regression can, e.g., be based on the sparse identification of nonlinear dynamics (SINDy) algorithm \cite{Brunton2016discovering, mangan2016inferring}, genetic algorithms \cite{Reuter2022}, or mixed-integer optimization \cite{Cozad2018-lb,Kim2023-bt,Bertsimas2023-ge,Austel2017-xt}. Yet, there are many open research questions in model discovery and because of the numerical challenges of inferring symbolic expressions from data, often an intermediate training of a hybrid model is favorable \cite{Martensen2024}.

\myheader{Universal differential equations}
We think that combining data-driven surrogate models and differential equations is a very promising approach. The relationship between differential equations and artificial neural networks (ANN) has been studied in several papers. E.g., it is well known that residual ANN can be interpreted as explicit Euler discretizations of ordinary differential equations \cite{Haber2017StableArchitecturesDeep}. Also, several approaches have been suggested how the two concepts can be combined in hybrid models. In addition to structure-based methods, such as physics-informed, Hamiltonian, and Lagrangian neural networks (PINN, HNN, LNN respectively) \cite{Raissi2019}, hybrid models that combine a first-principle model with an embedded machine learning model are of particular interest. A notable example is the framework of universal differential equations (UDE) \cite{Rackauckas2020}. Unlike PINNs, HNNs and LNNs, this approach uses automatic differentiation through the underlying numerical solver of the differential equation, allowing efficient training of the surrogates. ANN are known to approximate continuous functions on compact sets to any desired accuracy (universal approximation theorem) if they are large enough and use discriminatory activation functions \cite{Cybenko1989, Hornik1991}. Consequently, embedding them in differential equation models results in a trainable model capable of learning aspects of the underlying dynamics, assuming sufficient data quality and an appropriate network architecture. Training these universal approximators within the differential equation entails a) differentiating the solution of the differential equation with respect to the neural network parameters, typically achieved via the adjoint method as discussed in \cite{Rackauckas2020}, and b) updating the parameters using this information, e.g., by employing a variant of steepest descent or stochastic gradient methods like \texttt{Adam} \cite{Kingma2014}.
To formally introduce UDE, assume we have a standard ordinary differential equation (ODE) initial value problem on a fixed time horizon $\mathcal{T} = [0, \tf]$ 
\begin{align}\label{dxdt}
	\dot x (t) = f(x(t),u(t),p), \quad x(0) = x_0(p)
\end{align}
with parameters $p \in \mathbb{R}^{n_p}$ and sufficiently smooth right-hand side function $f: \mathbb{R}^{n_x} \times \mathbb R^{n_u} \times \mathbb{R}^{n_p} \mapsto \mathbb{R}^{n_x}$ and controls $u \in \mathcal{U}$. 
Now, the right-hand side function $f$ may be comprised of three parts: First, a function $\hat f : \mathbb{R}^{n_x} \times \mathbb{R}^{n_u} \times \mathbb{R}^{n_{\hat p}} \mapsto \mathbb{R}^{n_{\hat f}}$, whose structure is known a-priori from domain knowledge (e.g., from first-principles) and which may contain parameters $\hat p \in \mathbb{R}^{n_{\hat p}}$, e.g., physical constants. 
Second, a universal approximator model ${\myU: \mathbb{R}^{n_x} \times \mathbb{R}^{n_\theta} \mapsto \mathbb{R}^{n_{U}}}$ with parameters $\theta \in \mathbb{R}^{n_\theta}$ that may capture unknown parts of the model. 
Third, a function $f_S : \mathbb{R}^{n_{\hat f}} \times \mathbb{R}^{n_U} \mapsto \mathbb{R}^{n_x}$ which represents the composition of the known and unknown parts of the UDE. 
With $p = (\hat p, \theta)$, the initial values $x(0) = x_0(\hat p)$ are independent of $\theta$ and the right-hand side is

\begin{align}\label{f_hat}
	f(x(t),u(t),p) = f_S\left(\hat{f}(x(t),u(t),\hat{p}), \myU(x(t), \theta)\right)
\end{align} 
i.e., the function $\hat f$ is complemented by an ANN, e.g., as a replacement of a submodel.
The ANN itself has $J \in \mathbb{N}$ layers. As a reminder, in the simplest case, each of these layers consists of a linear transformation composed with a (typically nonlinear) activation function that is applied elementwise, i.e., 
\begin{align}
	x^{(j)} = s^{(j)}\left( W_j x^{(j-1)} + b_j  \right), \quad j \in [J].
\end{align}
Here, $x^{(0)} = x(t)$ is the input to the feed-forward ANN, $W_j \in \mathbb{R}^{d_j\times d_{j-1}}, b_j \in \mathbb{R}^{d_j}$ are the weights and biases, $s^{(j)}$ the activation function and $d_j \in \mathbb{N}$ the dimension of the $j$-th layer, respectively. We denote with $\theta^{(j)} = \text{vec}(W_j, b_j) \in \mathbb{R}^{d_{j}\cdot (d_{j-1}+1)}$ the weights and biases of the $j$-th layer stacked together as one vector and with  $\theta = \cup_{j \in [J]} \theta^{(j)} \in \mathbb{R}^{n_\theta}$ the weights and biases of all layers. 
The prediction function $\myU$ is given by
\begin{align} \label{eq_Utheta}
	\myU(x(t), \theta) = x^{(J)}.
\end{align}
Note that this formal definition also allows, without loss of generality, to consider several ANNs at once by stacking the different output vectors in a slightly modified definition \eqref{eq_Utheta}.

The parameters $p$ of our model comprise a) the weights and biases $\theta \in \mathbb{R}^{n_\theta}$ of the embedded ANN and b) the model parameters $\hat p$ of the first-principle model $\hat f$.
Note that a generalization towards partial differential algebraic equations on the one hand, and towards more involved ANN topologies on the other hand can be done in a similar way. We restrict ourselves to the case of UDE composed of ODE and feed-forward ANN mainly in the interest of notational simplicity.

UDEs as a hybrid model betweeen differential equations and data-driven universal approximators as suggested in \cite{Rackauckas2020,morGoyB23,Kamp2023} have been applied in areas as diverse as
chemical engineering \cite{Martensen2023a},
glacier ice flow modeling \cite{Bolibar2023a},
modeling of COVID-19 regional transmission \cite{Campos2023},
neuroscience \cite{ElGazzar2024},
climate modeling \cite{Gelbrecht2021,Ramadhan2023},
hydrology \cite{Hoge2022}, and
pharmacology \cite{Valderrama2024}.
Using the general definition~\eqref{f_hat} from above, UDEs contain the important special cases of ODEs (i.e., no embedded ANN) and of neural ODEs \cite{Chen2018b} (i.e., no domain knowledge in $\hat f$) as often used for theoretical studies, e.g., to investigate lifted approaches \cite{Dupont2019AugmentedNeuralODEs} or connections to optimal control theory and the turnpike phenomenon \cite{Ruiz2023}.

\myheader{Active learning and optimal experimental design}
Experimental training data may be sparse and difficult to obtain. Designing information-rich experiments has been studied for the two extreme cases of UDEs mentioned above: the training of ANNs and the parameter estimation in differential equations. In the first case, the machine learning community uses the approaches and terminology of \textit{active learning}, in the second case the statistical approach \textit{optimal experimental design} is often preferred.
Active learning is a large and very active research field. 
While it is intuitively clear that regions of the training domain where the model behaves highly nonlinear should be preferred for selecting training samples, it is challenging to design algorithms that do this automatically.
There are several contributions on how to select training samples for ANN or other surrogate models, see, e.g., \cite{cohn1994improving,settles2009active,Eason2014} for a general introduction to active learning and further references, \cite{ren2021survey} for a survey focussing on deep learning, \cite{prince2004does} for a critical study of the effectiveness of active learning, \cite{fang2017learning} for connections to reinforcement learning, and \cite{di2024active} for connections to Bayesian Optimization.
Also, in the context of model order reduction the concept of active learning has been applied successfully \cite{morBenGW15,morGoyB23,morGoyB24,zhuang2023active}. 

Learning model parameters for differential equations is traditionally related to the statistical approach of optimal experimental design (OED).
OED is a systematic approach for obtaining useful data for parameter estimation, model identification, and model discrimination. 
The estimated model parameters are random variables and are hence endowed with a confidence region. 
The size of this confidence region depends on how the process was controlled and when and what was measured. 
OED optimizes controls and sampling decisions in the sense of minimizing the size of this confidence region, as discussed in several textbooks \cite{Fedorov1972a,Atkinson1992,Pukelsheim1993,Kitsos2013}.
See, e.g. \cite{Koerkel2002,Sager2013,Schenkendorf2018,Wang2022,ratze2023optimal,bubel2024sequential}, for an extension of the concept to dynamic systems (ODEs) and applications in chemical engineering.

The online version of OED is conceptually closely related to (online) active learning and data assimilation.
In sequential OED, experimental designs are calculated, experimental data is obtained, and model parameters are estimated iteratively \cite{Koerkel1999,Kreutz2009}.
Sequential OED has been extended by online OED in which the experiment is directly re-designed when a new measurement is taken \cite{Stigter2006, Galvanin2009,Barz2013, Qian2014, Lemoine2016}.
OED has also been applied in the context of dual control, in which sequentially optimal control, experimental design, and parameter estimation problems are solved \cite{Jost2017} or exploration versus exploitation is balanced using policy gradients \cite{shen2023bayesian}.
%
While some software solutions for OED of dynamic systems have been published, e.g., \cite{Martensen2024c, Wang2022}, there are no general purpose methods available to treat complex differential equation models. Furthermore, OED for the concurrent estimation of model parameters of an UDE and the weight vectors of embedded submodels is completely open. 
While there are some contributions for the training of ANN using OED \cite{Cohn1996,Crombecq2011,Martens2015}, 
our paper is to our knowledge the first that proposes OED for UDE, especially with the ansatz to formulate and solve OED as a specifically structured mixed-integer optimal control problem, as suggested in \cite{Sager2013}.

\myheader{Ill-posed inverse problems and dimension reduction}
Ill-posed OED problems have been discussed in the literature \cite{barz2015nonlinear,xu1998truncated}. A variety of approaches have been proposed to overcome the issue of structural and practical non-identifiability, which corresponds to non-invertible or ill-conditioned matrices. 
Popular are truncated singular value decomposition (SVD) and column subset selection \cite{eswar2024bayesian}, which can be seen as special cases of model order reduction \cite{morBenGQetal21}. They are based on identifying lower-dimensional approximations of relevant matrices.
A particular challenge is the determination of the threshold value for truncating SVD \cite{falini2022review} with interesting suggestions such as the L-curve approach \cite{xu1998truncated}.
We will have a look at truncated SVD in the context of UDEs, where, due to the typical overparametrization of ANN, the ill-posedness is not the exception, but the typical situation.


\myheader{Information gain}
There are two different types of decisions that influence the impact of observations on the uncertainty of a follow-up parameter estimation or ANN training problem. First, experimental degrees of freedom that specify the setup of the experiment. We shall refer to these degrees of freedom as \textit{controls}. Second, the decision what and when to measure. We shall refer to these decisions as \textit{sampling}. Measuring as much as possible provides a maximum of information about unknown parameters. But typically measurements or data processing are costly. Therefore, the maximum amount of measurements is limited or measurements are penalized in the objective function via associated costs.
In previous work, via studying the necessary conditions of optimality of OED problems with respect to sampling decisions, the concept of \textit{global information gain} was introduced \cite{Sager2013}. It allows an a posteriori analysis and interpretation of information-rich time intervals.
So far, it has never been studied in the context of dimension reduction approaches.

\myheader{Contributions and outline}
We see several contributions to the literature, to our knowledge all presented for the first time.
First, we provide a self-contained, compact summary of fundamental approaches to OED and UDE, respectively, and the formal definition of a fusion of these concepts.
Second, we describe algorithmic implementations of OED for UDE and the obtained results for several benchmark problems, focussing among others on the interplay of mechanistic model parameters and ANN weights, on the individual impact of controls and sampling on a posteriori uncertainty, or on the question how the data can be used for the training of UDE.
Third, we propose a new approach to dimension reduction for UDE models that is based on lumping the sensitivities of differential states with respect to ANN weights and biases. This new approach does not require costly singular value decompositions, sometimes performs competetively, has a transparent interpretation, and might be exploited in the future in specific settings such as the training of sparse ANN.
Fourth, we propose to use the global information gain function as a new criterion to select the truncation hyperparameter in truncated SVD. This contribution is independent of UDE and applicable to ill-posed OED in general.

The paper is organized as follows. 
We start with some definitions and the formulation of the OED problem for ODE as an optimal control problem in Section~\ref{sec:oed}. 
In Section~\ref{sec:methodology} we apply this setting to the case of UDE and discuss ways to reduce the computational complexity. 
Numerical results for benchmark problems are presented in Section~\ref{sec:results} that illustrate the advantages of this approach when compared to equidistant sampling of longitudinal data.
We conclude with a discussion in Section~\ref{sec:discussion}.

\section{Optimal experimental design as an optimal control problem}
\label{sec:oed}

For models described by ODEs, the OED problem can be formulated as a specifically structured mixed-integer optimal control problem (MIOCP) \cite{Sager2013}. 
A variety of approaches have been suggested for OED, e.g., formulations based on
the variance-covariance matrix of the linearized model or robust formulations \cite{Koerkel2004},
sigma points or other approaches allowing to calculate approximations of global sensitivities \cite{Schenkendorf2018},
polynomial chaos \cite{streif2014optimal},
Gaussian process surrogates \cite{olofsson2018design},
adaptive discretizations \cite{schmid2024adaptive},
nonlinear preconditioners \cite{mommer2015nonlinear},
transport maps \cite{koval2024tractable},
or Bayesian optimization \cite{greenhill2020bayesian}.
While we think that the proposed methodology for UDE of the following section can be applied to most of these approaches in a straightforward way, we focus here on the linearized variant of OED.

In this approach, we add the so-called sensitivities $G = \mathrm{d}x / \mathrm{d} p$ of the states $x$ with respect to the model parameters $p$ to the system \eqref{dxdt} of differential equations.
The variational differential equations (VDE) for $G = \mathrm{d}x / \mathrm{d} p: [0, \tf] \mapsto \R^{n_x \times n_p}$ can be derived by taking derivatives of the system solution 
$$x(t) = x_0 + \displaystyle \int_{0}^{t} f(x(\tau), u(\tau), p) \; \mathrm{d} \tau$$ 
with respect to time $t$ and parameters $p$, resulting in the matrix-valued ODE initial value problem
\begin{align} \label{eq_VDE}
\dot{G}(t) & = f_x(x(t), u(t), p)G(t) + f_p(x(t),u(t),p), \quad \quad G(0) = \frac{\partial x_0}{\partial p}.
\end{align}
Here, $f_x$ and $f_p$ denote the partial derivatives of $f$ with respect to $x$ and $p$. 
Note that usually only a subset of all parameters is included here, while others are considered to be known and fixed. Without loss of generality, such fixed constants can be considered as part of the function $f$.
Based on the sensitivities $G$, also the FIM $F(\tf)$ can be modeled in the control problem.
The inverse $F(\tf)^{-1}$ is the variance-covariance matrix of the linearized least squares parameter estimation problem.
The goal of OED is to reduce the volume of the ellipsoid associated with the variance-covariance matrix. Since matrices can not be totally ordered, the experimental design community defined a variety of scalar objective functions $\phi: \R^{n_p \times n_p} \mapsto \R$. 
Typical examples are 
the trace (A-criterion and weighted sum of half-axes), $\phi_A(F(\tf)) = \frac{1}{n_p} \tr(F(\tf)^{-1})$,
the determinant (D-criterion and proxy for the volume), $\phi_D(F(\tf)) = \det(F(\tf)^{-1})^{\frac{1}{n_p}}$, and
the maximal eigenvalue (E-criterion and maximal half-axis), $\phi_E(F(\tf)) = \max \{ \lambda : \lambda \text{ is eigenvalue of } F(\tf)^{-1} \}$.
The degrees of freedom of the OED problem involve initial values $x_0$, control functions $u$ to excite the system, and sampling functions (when and what to measure) $w$.
The OED problem for the ODE \eqref{dxdt} formulated as a MIOCP is

\begin{align}
	\begin{array}{clcll}
		\displaystyle \min_{x, x_0, G, F, z, w, u} & \phi(F(\tf)) & \\[1.5ex]
		\mbox{s.t.} & \dot{x}(t) & = & f(x(t), u(t), p) &\\
		& \dot{G}(t) & = & f_x(x(t), u(t), p)G(t) + f_p(x(t),u(t),p), \\
		& \dot{F}(t) & = & \sum_{i=1}^{n_y} w_i(t) (h^i_x(x(t)) G(t))^\top (h^i_x(x(t)) G(t)), \\
		& \dot{z}(t) & = & w(t),\\
		& x(0) & = & x_0, \quad G(0) = \frac{\partial x_0}{\partial p}, \quad F(0) = 0, \quad z(0) = 0,\\
		& u(t) & \in & \mathcal{U},\\
		& w(t) & \in & \mathcal{W},\\
		& z_i(\tf)   & \leq & M_i,
	\end{array}
	\tag{OED}
	\label{OED}
\end{align}
with (differential) states $x\left(t\right) : \mathcal{T} \mapsto \mathbb{R}^{n_x}$ and their initial condition $x_0 \in \mathbb{R}^{n_x}$, model parameters $p \in \mathbb{R}^{n_p}$, time $t \in \mathcal{T}$, and controls $u\left(t\right) : \mathcal{T} \mapsto \mathbb{R}^{n_u}$. 
In addition to \eqref{dxdt} we also require an observed function $y = h(x)$ with $h : \mathbb{R}^{n_x} \mapsto \mathbb{R}^{n_y}$ and $y: \mathcal T \mapsto \mathbb{R}^{n_y}$.
The first and second constraint in \eqref{OED} denote the time evolution of the dynamical system and of the sensitivities $G : \mathcal{T} \mapsto \mathbb{R}^{n_x \times n_p}$ of $x$
with respect to $p$, respectively. 
The evolution of the symmetric matrix $F : \mathcal{T} \mapsto \mathbb{R}^{n_p \times n_p}$ is given by the weighted sum of observability Gramians $h^i_x\left(x(t)\right) G(t),~ i=1, \dots, n_y$ for each observed function of states. The weights $w_i\left(t\right) \in \{0, 1\}, ~ i = 1, \dots, n_y$ are the (binary) sampling decisions, 
where $w_i(t) = 1$ denotes the decision to perform a measurement at time $t$. The objective $\phi(F(\tf))$ of Mayer type is a suitable OED criterion as discussed above. 
In this formulation, upper bounds $M_i$ are provided as maximum amounts (in continuous time) of measurements. 
A detailed discussion of discrete-time and continuous-time measurements in OED, alternative formulations penalizing measurements in the objective function (with a hyperparameter that models the trade-off between information gain and experimental costs of measurements), and a detailed analysis of solution structures is given in \cite{Sager2013}.

The special features of \eqref{OED} with respect to standard optimal control problems, such as the non-dependency of some differential states ($x$) on others ($G$ and $F$), the inverse in the objective function, the integrality of sampling functions, or the positive-semidefiniteness of the right hand-side of $\dot{F}$, can and should be exploited in efficient algorithms and software \cite{Koerkel2004,Sager2013,Wang2022,Martensen2024c}.

A pretty straightforward result will become useful later, when the dimension of sensitivities shall be reduced.
\begin{lemma} \label{lem:sensitivitiesA}
Let a well-posed VDE \eqref{eq_VDE} with $(x,u,p)$ be given and let $G(\cdot)$ be a solution.
Let $A \in \R^{n_p \times n_A}$ be a matrix. Then the solution $G_s: [0, \tf] \mapsto \R^{n_x \times n_A}$ of the VDE
\begin{align} \label{eq_VDEA}
\dot{G_A}(t) & = f_x(x(t), u(t), p) G_A(t) + f_p(x(t),u(t),p) \; A, \quad \quad G_A(0) = \frac{\partial x_0}{\partial p} A
\end{align}
exists uniquely and is given by
\begin{align}
  G_A(t) &= G(t) \; A. \label{eq_GA}
\end{align}
\end{lemma}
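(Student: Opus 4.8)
The plan is to verify directly that the candidate $G_A(t) := G(t)\,A$ solves the initial value problem \eqref{eq_VDEA}, and then to invoke the well-posedness of \eqref{eq_VDE} to conclude that it is in fact \emph{the} unique solution. First I would use that $A$ is a constant matrix, so that differentiation commutes with right-multiplication by $A$, i.e.\ $\frac{\mathrm{d}}{\mathrm{d}t}\bigl(G(t)A\bigr) = \dot G(t)\,A$. Substituting the equation \eqref{eq_VDE} satisfied by $G$ then gives
\begin{align*}
\frac{\mathrm{d}}{\mathrm{d}t}\bigl(G(t)A\bigr)
&= \dot G(t)\,A
= \bigl(f_x(x(t),u(t),p)\,G(t) + f_p(x(t),u(t),p)\bigr)A \\
&= f_x(x(t),u(t),p)\bigl(G(t)A\bigr) + f_p(x(t),u(t),p)\,A,
\end{align*}
which is exactly the right-hand side of \eqref{eq_VDEA} evaluated at $G_A = G(t)A$. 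For the initial condition, $G_A(0) = G(0)\,A = \frac{\partial x_0}{\partial p}\,A$, matching the prescribed value. Hence $t \mapsto G(t)A$ is a solution of \eqref{eq_VDEA}, which already proves \eqref{eq_GA} once uniqueness is established.

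For uniqueness I would observe that \eqref{eq_VDEA} is again a linear matrix-valued initial value problem with the \emph{same} homogeneous coefficient $t \mapsto f_x(x(t),u(t),p)$ as \eqref{eq_VDE}, and with inhomogeneity $t \mapsto f_p(x(t),u(t),p)\,A$, which inherits the continuity (or integrability) of $f_p$ along the fixed trajectory $(x,u,p)$. Thus \eqref{eq_VDEA} has exactly the structure covered by the assumed well-posedness of \eqref{eq_VDE}: treating the problem column by column, each column of $G_A$ solves a vector linear ODE to which the standard existence-and-uniqueness theory (Picard--Lindel\"of, or Carath\'eodory under weaker regularity) applies. Consequently the solution of \eqref{eq_VDEA} exists and is unique, and by the computation above it must coincide with $G(t)A$.

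I do not anticipate any genuine obstacle here; the statement is essentially a one-line substitution plus linearity. The only point worth stating carefully is what ``well-posed VDE'' is taken to mean — namely existence and uniqueness for linear (matrix) ODEs with the given coefficient frozen along $(x,u,p)$ — so that the uniqueness part of the argument does not implicitly demand more regularity of $f$ than has already been assumed in \eqref{dxdt}–\eqref{eq_VDE}.
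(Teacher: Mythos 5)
Your proposal is correct and follows essentially the same route as the paper: verify by direct substitution (using that right-multiplication by the constant matrix $A$ commutes with differentiation) that $G(t)A$ solves \eqref{eq_VDEA} with the correct initial value, and obtain unique existence from Picard--Lindel\"of, since multiplying the inhomogeneity by $A$ does not affect the regularity guaranteed by the assumed well-posedness. Your write-up is merely a bit more explicit about the initial condition and the column-by-column reduction.
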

\begin{proof}
Unique existence follows from the Picard-Lindelöf theorem, multiplication with $A$ does not affect the Lipschitz continuity of $f_x$ and $f_p$ implied by the assumed well-posedness.
That $G_A(t) = G(t) \; A$ is a solution of \eqref{eq_VDEA} can be seen by multiplying \eqref{eq_VDE} with $A$ from the right hand side and using $\dot{G_A} = \dot{(G A)} = \dot{G} A$.
\end{proof}

\section{The extension towards universal differential equations}
\label{sec:methodology}


We consider OED problems of type \eqref{OED} for UDE of type \eqref{f_hat}.
The combined parameter estimation and training problem involves model parameters $\hat{p} \in \mathbb{R}^{n_{\hat{p}}}$ and weights and biases $\theta \in \mathbb{R}^{n_\theta}$ of the embedded ANN.
Qualitatively, they can both be captured in one large vector $p \in \R^{n_p}$ without any changes to the setting of \eqref{OED}.
Quantitatively though, the number $n_\theta$ of weights $\theta$ is typically very large in comparison to the number of model parameters $\hat p$. 
This poses two main issues: 
First, also the number of variational differential equations for the sensitivities $G(\cdot)$ is very large, resulting in a drastic increase of computation time.
Second, ANN are often overparametrized. This structural and practical non-identifiability of the weights leads to singular Fisher information matrices (FIM), violating a major assumption of OED contrasting it to most methods from active learning which are typically based on some kind of space-filling sampling of the training domain. 
We are interested in experimental designs for the estimation of both kinds of parameters, $\hat p$ and $\theta$.
Therefore, the challenge is to find ways to reduce the dimension of $G$ and hence of the FIM $F(\tf)$ such that the approximation of the FIM is invertible, accurate, and comparatively fast-to-evaluate.
%

\subsection{Dimension reduction methods}

In OED optimization problems, all free parameters $p$ are fixed to an initial guess $\bar{p}$ that serves as a linearization point of the nonlinear model. The follow-up parameter estimation of $p$ with new experimental data is only accounted for by taking sensitivities with respect to $p$ into account via the VDE \eqref{eq_VDE}. 
This implies that the general approach is still valid, if model evaluations based on $\bar p$ are combined with a reduced number or approximations of sensitivities.
We propose and investigate several approaches to do this and hence overcome the issue of overfitting in the context of OED. To be able to distinguish different settings and approaches conveniently in the following, we introduce the compact notation $w-u-a$ with options
\begin{align} 
w & \in \left\{ w^*, w0 \right\}, \label{wua_w} \\
u & \in \left\{ u^*, u0 \right\}, \label{wua_u} \\
a & \in \left\{ c, \svdu{n_s}, \psvdu{n_s}, o, l, ll, \tsvdu{n_s} \right\}. \label{wua_a} 
\end{align}
The $w$ refers to the sampling function in \eqref{OED}. For $w^*$, the sampling decisions when to measure have been optimized, for $w0$ an equidistant grid has been used to collect measurements.
The $u$ refers to the control functions in \eqref{OED}. For $u^*$, they have been degrees of freedom, for $u0$ the particular choice of $u(t) = 0 \; \forall \; t \in \mathcal{T}$ was evaluated.
The $a$ specifies either the $a=c$ complete (and usually ill-posed) use of all sensitivities for all $n_\theta$ weights and biases or a particular dimension reduction approach. These options for $a$ are explained in more detail in the remainder of this subsection.
An example notation is $w^*$-$u0$-$\svdu{3}$, an experimental design that uses optimized samplings, no controls, and a singular value decomposition with 3 singular values. The approach $a$ will also be used as a subscript for sensitivities $G$ and FIM $F$.

\myheader{Singular value decomposition ($a = \svdu{n_s}$ and $a = \psvdu{n_s}$)}
Motivated by reduced-order modeling approaches \cite{morBenGQetal21} and previous application in OED \cite{barz2015nonlinear}, we propose to use the truncated SVD of the FIM to select the most relevant weights and biases. The basic idea is to compute the SVD of the FIM using the VDE \eqref{eq_VDE} for $G_c =\mathrm{d}x / \mathrm{d} \theta: [0, \tf] \mapsto \R^{n_x \times n_\theta}$ and then truncate the SVD with the $n_s$ largest singular values. The relevant weights and biases can be determined by identifying the nonzero elements in the corresponding singular vectors. Only these enter the VDE in \eqref{OED}, leading to a significant complexity reduction for $n_s \ll n_p$ with sensitivities $G_{\svdu{n_s}}: \mathcal{T} \mapsto R^{n_x \times n_s}$ and $F_{\svdu{n_s}}(\tf) \in \mathbb{R}^{n_s \times n_s}$.

We investigate two cases: one in which all free model parameters, weights, and biases are used for the truncated SVD, and one in which the model parameters $\hat{p}$ are kept in addition to the SVD of the block matrix for all ANN weights and biases. We refer to the first approach as $a = \svdu{n_s}$, the second one as $a = \psvdu{n_s}$.

\myheader{Outer parameter for the ANN ($a = o$)}
We introduce a dummy parameter $p_{o}=1.0$ to which the ANN is formally multiplied. We consider
\begin{align}\label{f_hato}
	f(x(t),u(t), (p, p_o)) = f_S\left(\hat{f}(x(t),u(t),\hat{p}), p_o \cdot \myU(x(t), \theta)\right)
\end{align} 
which is formally equivalent to \eqref{f_hat}, but allows to evaluate sensitivities with respect to the additional parameter $p_o$.
Thus, $p_o$ can be interpreted as a proxy for capturing the overall influence of the ANN, similar to a model parameter multiplied to a known and fixed algebraic term in the right hand side. While the sensitivities with respect to this artificial parameter are different in dimension and value from the sensitivities with respect to the weights of the ANN, resulting experimental designs might still be a good choice.
As a major advantage, for one embedded ANN this approach reduces the dimension of the variational differential equations $G_o : \mathcal{T} \mapsto R^{n_x \times 1}$ and of the FIM $F_o(\tf) \in \mathbb{R}^{1\times 1}$ drastically.

\myheader{Lumping of all weights ($a=l$)}
Similar to $a = o$, we introduce a parameter $p_{l}=1.0$ for the embedded ANN. Now, however, the parameter is multiplied ``inside'' the ANN to all ${n_\theta}$ weights and biases, i.e., 
\begin{align}\label{f_hatl}
	f(x(t),u(t), (p, p_l)) = f_S\left(\hat{f}(x(t),u(t),\hat{p}), \myU(x(t), \theta_l)\right)
\end{align} 
for
		\begin{align} \label{eq_thetal}
			\theta_l &= \theta \cdot p_l =: \Xi_l \cdot p_l
		\end{align}
with $\Xi_l := \theta \in \R^{n_\theta}$. Thus, $p_l$ can be interpreted as a proxy for capturing the overall influence of all weights and biases as it is propagated through the network $\myU$ during inference. The dimension reduction is similar to the approach $a=o$ above with $G_l : \mathcal{T} \mapsto R^{n_x \times 1}$ and $F_l(\tf) \in \mathbb{R}^{1\times 1}$.

\myheader{Layerwise lumping of weights ($a=ll$)}
Taking this idea a step further, we introduce scaling parameters for subsets of weights and biases. A natural heuristic choice for such subsets are the weights and biases of each layer $j \in [J]$. Formally this can be done by introducing a vector of parameters $p_{ll} = [1.0,\dots,1.0] \in \mathbb R^J$ with elements that are formally multiplied to the weights and biases of the individual layers of the ANN,
		\begin{align} \label{eq_thetall}
			\theta_{ll} &= \left[\theta \odot \mathbbm{1}_{\theta_i \in  \theta^{(1)} \forall i \in [{n_\theta}]}, \dots, \theta \odot \mathbbm{1}_{ \theta_i \in \theta^{(J)} \forall i \in [{n_\theta}]}  \right] \cdot p_{ll} =: \Xi_{ll} \cdot p_{ll},
		\end{align}
		where $\odot$ denotes the Hadamard product of element-wise multiplication and $\Xi_{ll} \in \mathbb R^{{n_\theta} \times J}$. The entries of $\Xi_{ll}$ are hence given by
		\begin{align*}
			\Xi_{ll,i,j}=\begin{cases}
				\theta_i, & \text{if $\theta_i$ belongs to layer $j$, for $i \in [{n_\theta}], j \in [J]$}\\
				0, & \text{otherwise}
			 \end{cases}		
		\end{align*}
This approach yields sensitivities $G_{ll}(\cdot) \in \mathbb{R}^{n_x \times J}$ and a FIM $F_{ll}(\tf) \in \mathbb{R}^{J \times J}$.

\myheader{Weighted singular value decomposition ($a = \tsvdu{n_s}$)}
The ideas from above can also be combined. As an alternative to $a=ll$ we identify subsets of weights for artificial scaling via a truncated SVD, i.e., by choosing the indices of positive entries in a column of the decomposition matrix. As in other SVD approaches, the dimension is reduced to $n_s$.

Note that also a combination of $a = \tsvdu{n_s}$ and $a = \psvdu{n_s}$ is possible. We do not discuss it further, because the numerical results were almost identical to $a = \psvdu{n_s}$.

\myheader{Illustration for prototypical example}
Figure~\ref{fig:lem1} illustrates the sensitivities $G_c, G_o, G_l, G_{ll}, G_{\svdu{4}}$, and $G_{\tsvdu{4}}$ for an ANN $\myU(x(t), \theta)$ embedded in the Lotka Volterra equations 
$x_1(t) =  x_1(t) - \myU(x(t), \theta) - \hat p_2 u(t) x_1(t)$ and $\dot x_2(t) = -x_2(t) + \myU(x(t), \theta) - \hat p_4 u(t) x_2(t)$, see also Section~\ref{sec_lotka}.
The ANN $\myU(x(t), \theta)$ for the interaction term in $n_x = 2$ differential states has $J=4$ layers, $n_\theta = 81$ weights, and different nonlinear activation functions.
Shown are sensitivities for the $n_s=4$ largest singular values.
The sensitivities thus map $\mathcal T$ to $\R^{2\times81}, \R^{2 \times 1}, \R^{2 \times 1},\R^{2 \times 4}$, $\R^{2 \times 4}$, and $\R^{2 \times 4}$, respectively. 
The different dimensions indicate the potential of dimension reduction in terms of computational speedup.
\begin{figure}[h!!!]
	\centering
	\includegraphics[width=.8\linewidth]{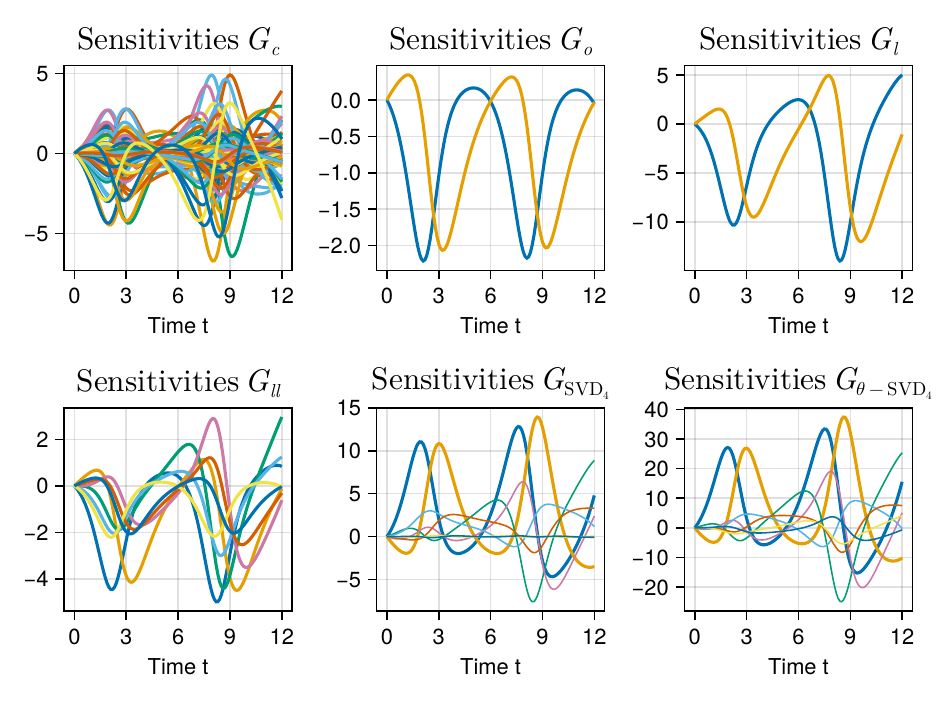}
	\caption{Illustration of sensitivities for ANN embedded in the Lotka-Volterra equations. Although they have different dimensions, the time intervals of interest (those with high absolute values) are similar.}
	\label{fig:lem1}
\end{figure}

\begin{figure}[t]
	\centering
	\includegraphics[width=.72\linewidth]{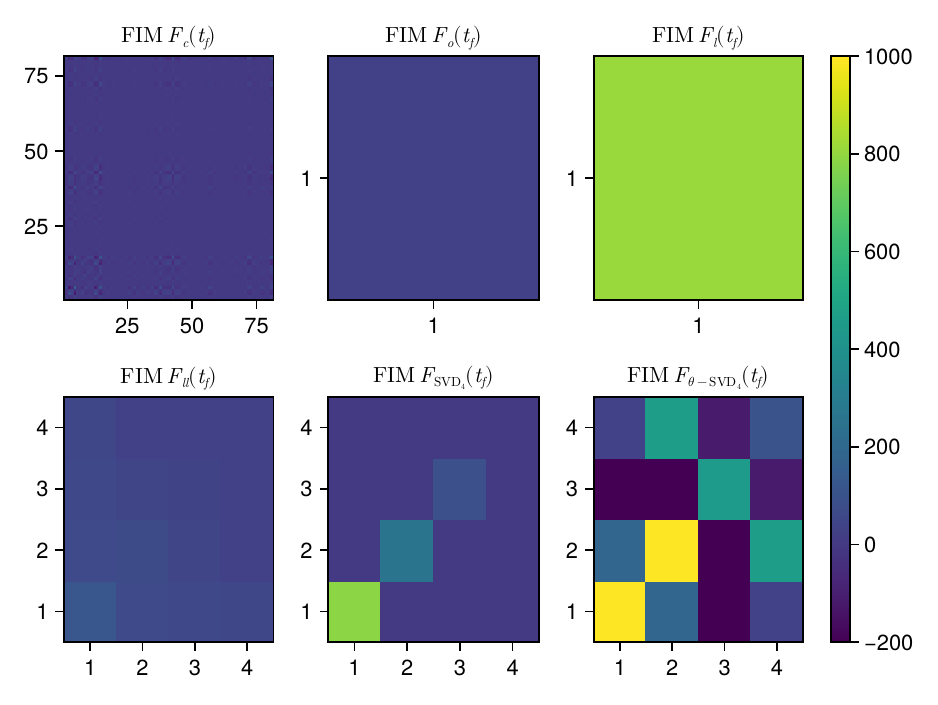}
	\caption{Comparison of FIM $F(\tf)$ corresponding to the sensitities from Figure~\ref{fig:lem1}. The FIM $F_c(\tf)$ for all $2 n_\theta$ sensitivities is indefinite, while all dimension reduction approaches lead to positive definite FIM.}
	\label{fig:F_tf_indefinite}
\end{figure}
Figure \ref{fig:F_tf_indefinite} shows the corresponding FIM. Here one observes the potential of dimension reduction in terms of regularity of the FIM. 

\subsection{Calculation of sensitivities and Fisher information matrix}

After the conceptual introduction of the different approaches to reducing the dimension, we have a closer look at how the sensitivies can be calculated and how they are related.
For the block matrix sensitivities $G(t) = \left[G_{\hat p}(t) \; G_{c}(t)\right]$ of the differential states $x$ with respect to all parameters $p = (\hat p, \theta)$ application of the general VDE \eqref{eq_VDE} and of the particular UDE structure \eqref{f_hat} yields
\begin{align} 
 \frac{\mathrm{d}}{\mathrm{d} t} \left[G_{\hat p}(t) \; G_{c}(t)\right] &= \frac{\partial f}{\partial x}(x(t),u(t),p) \; \left[G_{\hat p}(t) \; G_{c}(t)\right] + \frac{\partial f}{\partial p}(x(t),u(t),p) \label{eq_Gca} \\ 
			   &= \left(\frac{\partial f_S}{\partial \hat f} \frac{\partial \hat f}{\partial x} + \frac{\partial f_S}{\partial \myU} \frac{\partial \myU}{\partial x} \right) \; \left[G_{\hat p}(t) \; G_{c}(t)\right] + \left[\frac{\partial f_S}{\partial \hat f}\frac{\partial \hat f}{\partial \hat p} ~ \frac{\partial f_S}{\partial \myU}\frac{\partial \myU}{\partial \theta} \right] \label{eq_Gcb}
\end{align}
with initial values $G_{\hat p}(0) = \mathrm{d} x(0) / \mathrm{d} {\hat p} \in \mathbb R^{n_x \times n_{\hat p}}$ and $G_c(0) = 0 \in \mathbb R^{n_x \times n_\theta}$. Note that we omitted arguments in \eqref{eq_Gcb} for notational simplicity.
Formulation \eqref{eq_Gcb} generalizes pure first-principle models with $\partial_{\myU} f_S = 0, \partial_{\hat f} f_S = 1$ and neural ordinary differential equations with $\partial_{\myU} f_S = 1, \partial_{\hat f} f_S = 0$. 

Integrating the third differential equation in \eqref{OED} for the particular case of \eqref{f_hat} gives the FIM
\begin{align}
F(\tf) &= 
  \begin{pmatrix}
  F_{\hat p \hat p}(\tf) & F_{\hat p c}(\tf) \\
  F_{\hat p c}^T(\tf) & F_{c c}(\tf) 
  \end{pmatrix} := \\
 & 
  \begin{pmatrix}
     \displaystyle \int_0^{\tf} \sum_{i=1}^{n_y} w_i(t) 
     G_{\hat p}^T(t) h^{iT}_x(x(t)) h^i_x(x(t)) G_{\hat p}(t) \; \mathrm{d}t
  & 
     \displaystyle \int_0^{\tf} \sum_{i=1}^{n_y} w_i(t) 
     G_{\hat p}^T(t) h^{iT}_x(x(t)) h^i_x(x(t)) G_{c}(t) \; \mathrm{d}t
  \\
     \displaystyle \int_0^{\tf} \sum_{i=1}^{n_y} w_i(t) 
     G_{c}^T(t) h^{iT}_x(x(t)) h^i_x(x(t)) G_{\hat p}(t) \; \mathrm{d}t
  & 
     \displaystyle \int_0^{\tf} \sum_{i=1}^{n_y} w_i(t) 
     G_{c}^T(t) h^{iT}_x(x(t)) h^i_x(x(t)) G_{c}(t) \; \mathrm{d}t
  \end{pmatrix} \label{eq_Fhybrid}
\end{align}

Using the notation from above, we obtain the following result for the weight lumping approaches.
\begin{lemma}
	\label{lem:sensitivitiesLumped}
Let $\myU : \mathbb R^{n_x} \times \mathbb{R}^{n_\theta} \mapsto \mathbb R^{n_{\hat{x}}}$ be an ANN with weights $\theta \in \mathbb R^{n_\theta}$. 
Let $\Xi_l$ and $\Xi_{ll}$ be defined as in \eqref{eq_thetal} and \eqref{eq_thetall}, respectively, and
$\mathbbm{1}_J$ be the all ones vector of dimension $J$.
Then for $t$ almost everywhere in $\mathcal T$
	\begin{align}
		G_l(t) &= G_c(t) \cdot \Xi_l, \label{eq_lem1a} \\
		G_{ll}(t) &= G_c(t) \cdot \Xi_{ll},\label{eq_lem1b} \\
		G_{l}(t) &= G_{ll}(t) \cdot \mathbbm{1}_{J}. \label{eq_lem1c}
	\end{align}
\end{lemma}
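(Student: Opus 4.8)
The plan is to reduce all three identities to Lemma~\ref{lem:sensitivitiesA} by observing that each artificial scaling parameter enters the ANN only through an affine-linear reparametrization of the weight vector $\theta$. The first thing I would make precise is the linearization point: in the corresponding OED problems the scalings are fixed to $p_l = 1$ and $p_{ll} = \mathbbm{1}_J$, and at these values $\theta_l = \Xi_l \cdot 1 = \theta$ by \eqref{eq_thetal}, while $\theta_{ll} = \Xi_{ll}\,\mathbbm{1}_J = \theta$ by \eqref{eq_thetall}, the latter because every index $i \in [n_\theta]$ belongs to exactly one layer, so the $i$-th entry of $\Xi_{ll}\,\mathbbm{1}_J$ equals $\sum_{j\in[J]}\Xi_{ll,i,j} = \theta_i$. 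Consequently the lumped right-hand sides \eqref{f_hatl} (and its layerwise analogue built from \eqref{eq_thetall}), evaluated along the nominal trajectory, coincide with the complete UDE right-hand side \eqref{f_hat}; hence the nominal state $x(\cdot)$, the control $u(\cdot)$, and the state-Jacobian $f_x(x(\cdot),u(\cdot),p)$ that enter the VDEs for $G_c$, $G_l$, and $G_{ll}$ are one and the same.

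Next I would differentiate the lumped right-hand sides with respect to the new parameters. Since $\theta_l = \Xi_l p_l$ and $\theta_{ll} = \Xi_{ll} p_{ll}$ are linear in $p_l$ and $p_{ll}$, the chain rule applied to \eqref{f_hatl} gives $\partial f/\partial p_l = \frac{\partial f_S}{\partial \myU}\frac{\partial \myU}{\partial \theta}\,\Xi_l = (\partial f/\partial\theta)\,\Xi_l$ and, analogously, $\partial f/\partial p_{ll} = (\partial f/\partial\theta)\,\Xi_{ll}$, both evaluated at the linearization point where $\theta_l = \theta_{ll} = \theta$. Therefore the VDE for the $\theta$-block $G_c$ — the lower-right block in \eqref{eq_Gca}--\eqref{eq_Gcb} — is precisely the base VDE of Lemma~\ref{lem:sensitivitiesA}, and the VDEs governing $G_l$ and $G_{ll}$ are its right-multiplied versions \eqref{eq_VDEA} with $A = \Xi_l$ and $A = \Xi_{ll}$, respectively; the initial values agree as well, since $x_0$ depends only on $\hat p$ so $G_c(0) = 0$ and $0\cdot\Xi_l = 0\cdot\Xi_{ll} = 0$. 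Lemma~\ref{lem:sensitivitiesA} then delivers \eqref{eq_lem1a} and \eqref{eq_lem1b} directly. Finally \eqref{eq_lem1c} is obtained by combining these with the identity $\Xi_{ll}\,\mathbbm{1}_J = \Xi_l$ noted above: $G_{ll}(t)\,\mathbbm{1}_J = G_c(t)\,\Xi_{ll}\,\mathbbm{1}_J = G_c(t)\,\Xi_l = G_l(t)$.

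I do not expect a genuine obstacle here; the argument is essentially bookkeeping around the chain rule. The one point requiring care — and the reason for the ``for $t$ almost everywhere in $\mathcal T$'' qualifier — is regularity: if the activation functions are not everywhere differentiable (or $u$ is merely measurable), then $\partial\myU/\partial\theta$ and hence $f_p$ exist only for a.e.\ $t$, so the VDEs and the chain-rule identities above must be read in the Carathéodory sense and the conclusions hold a.e. A secondary technicality is that Lemma~\ref{lem:sensitivitiesA} is phrased for the full-parameter VDE while here it is applied to the $\theta$-sub-block; this is harmless, because that sub-block satisfies a VDE of exactly the same form (inhomogeneity $f_\theta$, zero initial value), or one may instead invoke the lemma on the full parameter vector $p = (\hat p,\theta)$ with $A = \left[\begin{smallmatrix}0\\\Xi_l\end{smallmatrix}\right]$ resp.\ $\left[\begin{smallmatrix}0\\\Xi_{ll}\end{smallmatrix}\right]$ and retain the rows corresponding to $\theta$.
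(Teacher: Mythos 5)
Your proposal is correct and follows essentially the same route as the paper's proof: apply the chain rule at the linearization points $p_l=1$, $p_{ll}=\mathbbm{1}_J$ to get $\partial \myU/\partial p_l = (\partial \myU/\partial\theta)\,\Xi_l$ and $\partial \myU/\partial p_{ll} = (\partial \myU/\partial\theta)\,\Xi_{ll}$, invoke Lemma~\ref{lem:sensitivitiesA} with $A=\Xi_l$ resp.\ $A=\Xi_{ll}$ (initial values matching since $G_c(0)=0$), and deduce \eqref{eq_lem1c} from $\Xi_l=\Xi_{ll}\,\mathbbm{1}_J$. Your added remarks on the a.e.\ regularity and on applying Lemma~\ref{lem:sensitivitiesA} to the $\theta$-sub-block are careful elaborations of points the paper leaves implicit, not a different argument.
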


\begin{proof}
We start by looking at the VDE \eqref{eq_Gcb} derived above.
 We observe that $f_x$, $\frac{\partial f_S}{\partial \hat f}\frac{\partial \hat f}{\partial \hat p}$ and $\frac{\partial f_S}{\partial \myU}$ are independent of the particular approach $a=c$, $a=l$, and $a=ll$. 
Application of the chain rule for the setting $p_{l} = 1.0$ and $\theta_l = \theta \cdot p_{l}$ yields
\begin{align*}
	\frac{\partial \myU}{\partial p_{l}}(x(t),\theta_l) 
	 &= \frac{\partial \myU}{\partial \theta}(x(t),\theta_l) \cdot  \frac{\partial \theta_l}{\partial p_{l}} \\
	 &= \frac{\partial \myU}{\partial \theta}(x(t),\theta) \cdot \theta \\
	 &= \frac{\partial \myU}{\partial \theta}(x(t),\theta) \cdot \Xi_l.
\end{align*}
Therefore and with the trivial property $G_l(0) = 0 = G_c(0) \cdot \Xi_l$, we can apply Lemma~\ref{lem:sensitivitiesA} with the choice $A = \Xi_l = \theta \in \mathbb R^{n_\theta \times 1}$ and obtain equation \eqref{eq_lem1a}.

Equation \eqref{eq_lem1b} follows similarly for the setting $p_{ll} = [1.0,\dots,1.0] \in \mathbb R^J$ and $\theta_{ll} = \Xi_{ll} \cdot p_{ll}$ via
\begin{align*}
	\frac{\partial \myU}{\partial p_{ll}}(x(t),\theta_{ll}) 
	 &= \frac{\partial \myU}{\partial \theta}(x(t),\theta_{ll}) \cdot \frac{\partial \theta_{ll}}{\partial p_{ll}} = \frac{\partial \myU}{\partial \theta}(x(t),\theta) \cdot \Xi_{ll}
\end{align*}
with $\Xi_{ll} \in \mathbb R^{n_x \times J}$.
Equation \eqref{eq_lem1c} follows from $\Xi_l = \Xi_{ll} \mathbbm{1}_{J}$.
\end{proof}
The relations between the corresponding FIM follow directly.
\begin{corollary} \label{cor:FisherLumped}
With the notation from above for approach $a \in \{c, l, ll \}$ we have
\begin{align}
F_l(\tf) &= 
 \begin{pmatrix}
 I & 0 \\
 0 & \Xi_l
 \end{pmatrix}^T 
 \; F(\tf) \;
 \begin{pmatrix}
 I & 0 \\
 0 & \Xi_l
 \end{pmatrix}
\label{cor_Fisherl} \\
F_{ll}(\tf) &= 
 \begin{pmatrix}
 I & 0 \\
 0 & \Xi_{ll}
 \end{pmatrix}^T 
 \; F(\tf) \;
 \begin{pmatrix}
 I & 0 \\
 0 & \Xi_{ll}
 \end{pmatrix}
\label{cor_Fisherll}
\end{align}
\end{corollary}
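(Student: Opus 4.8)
The plan is to obtain both identities directly from Lemma~\ref{lem:sensitivitiesLumped} and the definition \eqref{eq_Fhybrid} of the FIM, with essentially no computation beyond a block factorization. The key observation to state up front is that, for $a\in\{l,ll\}$, the reduced model is linearized about the \emph{same} trajectory $x(\cdot)$ as for $a=c$: the scaling parameters are fixed at $p_l=1.0$ and $p_{ll}=\mathbbm{1}_J$, so $\theta_l=\theta$ and $\theta_{ll}$ reproduces $\theta$, and consequently $f$, the observation function $h$, and in particular the Jacobians $h^i_x(x(t))$ appearing in \eqref{eq_Fhybrid} are identical across the three approaches. Thus the only change in passing from $a=c$ to $a\in\{l,ll\}$ is in the $\theta$-block of the sensitivity matrix.

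First I would write the sensitivity of the reduced model with respect to its parameters $(\hat p, p_a)$ as a block row and pull out a constant matrix. Using $G(t)=[\,G_{\hat p}(t)\;G_c(t)\,]$ together with $G_l(t)=G_c(t)\,\Xi_l$ and $G_{ll}(t)=G_c(t)\,\Xi_{ll}$ from Lemma~\ref{lem:sensitivitiesLumped},
\begin{align*}
	[\,G_{\hat p}(t)\;G_a(t)\,] \;=\; [\,G_{\hat p}(t)\;G_c(t)\,]\,T_a \;=\; G(t)\,T_a,
	\qquad
	T_a := \begin{pmatrix} I & 0 \\ 0 & \Xi_a \end{pmatrix},
\end{align*}
where $\Xi_l\in\R^{n_\theta\times 1}$ for $a=l$ and $\Xi_{ll}\in\R^{n_\theta\times J}$ for $a=ll$, and $I$ is the $n_{\hat p}\times n_{\hat p}$ identity. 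Note $T_a$ does not depend on $t$.

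Second, I would integrate the third differential equation of \eqref{OED} for the reduced model, which is exactly \eqref{eq_Fhybrid} with $G_c$ replaced by $G_a$, and substitute the factorization:
\begin{align*}
	F_a(\tf) &= \int_0^{\tf}\sum_{i=1}^{n_y} w_i(t)\,[\,G_{\hat p}\;G_a\,]^T h^{iT}_x h^i_x\,[\,G_{\hat p}\;G_a\,]\,\mathrm{d}t \\
	&= T_a^T\left(\int_0^{\tf}\sum_{i=1}^{n_y} w_i(t)\,G^T h^{iT}_x h^i_x\,G\,\mathrm{d}t\right)T_a \;=\; T_a^T\,F(\tf)\,T_a,
\end{align*}
where the constant $T_a$ was pulled out of the integral and \eqref{eq_Fhybrid} identifies the bracketed term as $F(\tf)$. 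Taking $a=l$ gives \eqref{cor_Fisherl} and $a=ll$ gives \eqref{cor_Fisherll}. The proof is pure bookkeeping; the only point needing care — which I would flag explicitly — is that $G$ and the $h^i_x(x(t))$ in \eqref{eq_Fhybrid} are literally the same objects for all three approaches (a consequence of fixing the scaling parameters to ones and of Lemma~\ref{lem:sensitivitiesLumped}), so that the block partition of $F(\tf)$ aligns with the block structure $T_a=\mathrm{blockdiag}(I,\Xi_a)$; there is no genuine obstacle beyond this.
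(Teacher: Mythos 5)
Your proposal is correct and follows essentially the same route as the paper's proof: both rest on Lemma~\ref{lem:sensitivitiesLumped} together with the FIM definition \eqref{eq_Fhybrid}, the only cosmetic difference being that you factor the block-row sensitivity as $G(t)\,T_a$ and pull the constant $T_a$ out of the integral, whereas the paper multiplies out the block matrices and compares with inserting \eqref{eq_lem1a} into the FIM definition. Your explicit remark that $x(\cdot)$ and $h^i_x(x(t))$ are unchanged because the scaling parameters are fixed at one is a useful clarification but not a different argument.
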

\begin{proof}
Follows from multiplication using $F(\tf)$ from \eqref{eq_Fhybrid} as
\begin{align*}
 \begin{pmatrix}
 I & 0 \\
 0 & \Xi_l
 \end{pmatrix}^T 
 \; F(\tf) \;
 \begin{pmatrix}
 I & 0 \\
 0 & \Xi_l
 \end{pmatrix}
&=
 \begin{pmatrix}
 I & 0 \\
 0 & \Xi_l
 \end{pmatrix}^T 
 \; \begin{pmatrix}
  F_{\hat p \hat p}(\tf) & F_{\hat p c}(\tf) \\
  F_{\hat p c}^T(\tf) & F_{c c}(\tf)
 \end{pmatrix} \;
 \begin{pmatrix}
 I & 0 \\
 0 & \Xi_l
 \end{pmatrix}
 =
 \begin{pmatrix}
  F_{\hat p \hat p}(\tf) & F_{\hat p c}(\tf) \Xi_l \\
  \Xi_l^T F_{\hat p c}^T(\tf) & \Xi_l^T F_{c c}(\tf) \Xi_l
 \end{pmatrix}
\end{align*}
 and comparing the result to inserting (\ref{eq_lem1a}) in the definition of the FIM. The argument is identical for \eqref{cor_Fisherll}.
\end{proof}

We now consider approximations of sensitivities and FIM in the SVD approach.
\begin{lemma} \label{lem:sensitivitiesSVD}
Let $\myU : \mathbb R^{n_x} \times \mathbb{R}^{n_\theta} \mapsto \mathbb R^{n_{\hat{x}}}$ be an ANN with weights $\theta \in \mathbb R^{n_\theta}$. 
Then there exists for any number $1 \le n_s \le n_\theta$ of positive singular values of $F_{c c}(\tf)$ a matrix $V \in \mathbb R^{n_\theta \times n_s}$ such that 
\begin{align}
F_\svdu{n_s}(\tf) &= 
 \begin{pmatrix}
 I & 0 \\
 0 & V
 \end{pmatrix}^T 
 \; F(\tf) \;
 \begin{pmatrix}
 I & 0 \\
 0 & V
 \end{pmatrix}
= 
 \begin{pmatrix}
 F_{\hat p \hat p}(\tf) & F_{\hat p c}(\tf) V\\
 V^T F_{\hat p c}^T(\tf) & D
 \end{pmatrix}
 \label{eq_FisherSVD} 
\end{align}
where $D = V^T F_{c c}(\tf) V \in \R^{n_s \times n_s}$ is a diagonal, positive definite matrix with eigenvalues of $F_{c c}(\tf)$ on the main diagonal.
Additionaly, we have for $t$ almost everywhere in $\mathcal T$
	\begin{align}
		G_\svdu{n_s}(t) &= G_{c}(t) \cdot V. \label{eq_sensitivitySVD} 
	\end{align}
\end{lemma}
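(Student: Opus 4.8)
The plan is to construct $V$ directly from the spectral decomposition of $F_{cc}(\tf)$. First I would note that, by its definition in \eqref{eq_Fhybrid}, the block $F_{cc}(\tf)=\int_0^{\tf}\sum_{i}w_i(t)\,(h^i_x(x(t))G_c(t))^\top(h^i_x(x(t))G_c(t))\,\mathrm dt$ is symmetric and positive semidefinite, so its singular value decomposition can be taken to be an orthogonal eigendecomposition $F_{cc}(\tf)=\sum_{k=1}^{n_\theta}\sigma_k v_k v_k^\top$ with orthonormal $v_k$ and $\sigma_1\ge\dots\ge\sigma_{n_\theta}\ge 0$, where by hypothesis $\sigma_1,\dots,\sigma_{n_s}>0$. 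Setting $V:=[\,v_1\ \dots\ v_{n_s}\,]\in\mathbb R^{n_\theta\times n_s}$ then gives $V^\top V=I_{n_s}$ and $D:=V^\top F_{cc}(\tf)V=\mathrm{diag}(\sigma_1,\dots,\sigma_{n_s})$, which is diagonal, positive definite, and carries the $n_s$ largest eigenvalues of $F_{cc}(\tf)$ on its diagonal. This already pins down the lower-right block of the asserted identity.

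Next I would establish the sensitivity identity \eqref{eq_sensitivitySVD}, mirroring the proof of Lemma~\ref{lem:sensitivitiesLumped}. The truncated-SVD reduction amounts to restricting the admissible perturbations of $\theta$ to the subspace $\mathrm{range}(V)$, i.e.\ to replacing, in the $\theta$-block of the VDE \eqref{eq_Gcb}, the inhomogeneity $\tfrac{\partial f_S}{\partial\myU}\tfrac{\partial\myU}{\partial\theta}$ by $\tfrac{\partial f_S}{\partial\myU}\tfrac{\partial\myU}{\partial\theta}\,V$ (by the chain rule), while leaving the initial value $G_c(0)=0=0\cdot V$ untouched and $f_x$, $\tfrac{\partial f_S}{\partial\myU}$ unchanged. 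Lemma~\ref{lem:sensitivitiesA} applied to the well-posed VDE for $G_c$ with the choice $A=V$ then yields $G_{\svdu{n_s}}(t)=G_c(t)\,V$ for $t$ almost everywhere in $\mathcal T$.

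Finally, for the Fisher matrix I would insert the reduced block sensitivity $[\,G_{\hat p}(t)\ \ G_c(t)V\,]$ into the FIM definition \eqref{eq_Fhybrid} — equivalently, apply the congruence transformation by the block-diagonal matrix with blocks $I$ and $V$ to $F(\tf)$, exactly as in the proof of Corollary~\ref{cor:FisherLumped} — and read off the four blocks $F_{\hat p\hat p}(\tf)$, $F_{\hat p c}(\tf)V$, $V^\top F_{\hat p c}^\top(\tf)$, and $V^\top F_{cc}(\tf)V$. Substituting $V^\top F_{cc}(\tf)V=D$ from the first paragraph gives \eqref{eq_FisherSVD}.

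The calculations — the chain rule and the block/congruence multiplications — are routine. The one step deserving care is the first one: it is only because $F_{cc}(\tf)$ is symmetric positive semidefinite that its SVD may be chosen as an orthogonal eigendecomposition, which is what makes $V^\top F_{cc}(\tf)V$ genuinely diagonal with eigenvalues of $F_{cc}(\tf)$ on the diagonal; one also uses, implicitly, well-posedness of the underlying VDE in order to invoke Lemma~\ref{lem:sensitivitiesA}.
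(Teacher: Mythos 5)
Your proposal is correct and follows essentially the same route as the paper: exploit symmetry and positive semidefiniteness of $F_{cc}(\tf)$ to take the SVD as an orthogonal eigendecomposition, truncate to the columns for the $n_s$ largest singular values to get the diagonal positive definite block $D=V^\top F_{cc}(\tf)V$, identify the reduced sensitivities via Lemma~\ref{lem:sensitivitiesA} with $A=V$, and read off the block structure of $F_\svdu{n_s}(\tf)$ by the congruence transformation as in Corollary~\ref{cor:FisherLumped}. The only cosmetic difference is that you spell out the block multiplication for the full FIM a bit more explicitly than the paper does.
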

\begin{proof}
We consider the decomposition of the lower right submatrix $F_{c c}(\tf)$ of $F(\tf)$
\begin{align}
	F_{c c}(\tf) &= \int_{0}^{\tf} \sum_{i=1}^{n_y} (h_x^i(x(t))G_c(t))^T (h_x^i(x(t))G_c(t)) \; \mathrm{d} t = \hat U \hat D {\hat V}^T
	\label{eq:F_svd}
\end{align}
with $\hat U, \hat V \in \mathbb R^{n_\theta \times n_\theta}$ unitary matrices and $\hat D \in \mathbb R^{n_\theta \times n_\theta}$ diagonal containing the singular values of $F_{c c}(\tf)$.  
As $F_{c c}(\tf)$ is symmetric and positive semidefinite by construction, $\hat U = \hat V$ can be chosen and singular values and eigenvalues of $F_{c c}(\tf)$ coincide \cite[Corollary 2.5.11]{horn2012matrix}. We obtain
\begin{align}
			\begin{split}
			\hat D 	&= {\hat V}^T F_{cc}(\tf) \hat V \\
			  	&= {\hat V}^T \sum_{i=1}^{n_y} \int_{0}^{\tf} (h_x^i(x(t)) G(t))^T (h_x^i(x(t))  G(t))  \; \mathrm{d} t \; \hat V \\
				&= \sum_{i=1}^{n_y} \int_{0}^{\tf} (h_x^i(x(t))  G(t) \hat V)^T (h_x^i(x(t)) G(t) \hat V) \; \mathrm{d} t
			\end{split}
			\label{eq:Dhat}
\end{align}
		Truncating the singular value decomposition \eqref{eq:F_svd} and replacing $\hat V$ in \eqref{eq:Dhat} by $V \in \mathbb R^{n_\theta \times n_s}$, where $V$ is given by the columns of $\hat V$ corresponding to the $n_s$ largest singular values of $F_{cc}(\tf)$, we can obtain a lower-dimensional, diagonal, and positive definite approximation $D \in \mathbb{R}^{n_s \times n_s}$ of $F_{cc}(\tf)$:
		\begin{align}
			\begin{split}
			 D &= \sum_{i=1}^{n_y} \int_{0}^{\tf} (h_x^i(x(t)) G_c(t) V)^T (h_x^i(x(t)) G_c(t) V) \; \mathrm{d} t
			\end{split}
			\label{eq:D}
		\end{align}
		
The term $G_c(t) V$ can be interpreted as reduced sensitivities $G_\svdu{n_s}(t) \in \mathbb{R}^{n_x \times n_s}$. Property \eqref{eq_sensitivitySVD} follows from Lemma~\ref{lem:sensitivitiesA}, similarly to the proof argument in Lemma~\ref{lem:sensitivitiesLumped}.
\end{proof}
The approximation of $F_{cc}(\tf)$ with a diagonal matrix containing the $n_s$ largest eigenvalues is particularly interesting, because the A-, D-, and E-criteria for the objective $\phi(F(\tf))$ of \eqref{OED} mentioned in Section~\ref{sec:oed} can be formulated in terms of eigenvalues. This follows from basic linear algebra results, such as the trace being the sum of all eigenvalues, the determinant being the product of all eigenvalues, and the eigenvalues of an inverse matrix $A^{-1}$ being the inverse of those of the matrix $A$.
Also, singular value decompositions are known to provide the best rank-$n_s$ approximation of a matrix in the Frobenius norm \cite{Eckart1936}.
As $(w,u,x,G,F,z)$ change during optimization, for fixed matrices $V$ the expression $(\sum_{i=1}^{n_y} \int_{0}^{\tf} (h_x^i(x(t)) G_c(t) V)^T (h_x^i(x(t)) G_c(t) V) \; \mathrm{d} t)^{-1}$ is not necessarily diagonal any more, though.

\subsection{Accuracy of the reduction approaches} \label{ref_accuracy}

A global error analysis is difficult due to the nonlinearity of \eqref{OED}. 
We start by looking at the objective function $\phi$ for fixed controls $u$ and sampling decisions $w$.
Obvisously, the values of $\phi(F_{ll}(\tf))$ and of
\begin{align*}
\phi(F_l(\tf)) =
\phi\left(
 \begin{pmatrix}
  F_{\hat p \hat p}(\tf) & F_{\hat p c}(\tf) \Xi_l \\
  \Xi_l^T F_{\hat p c}^T(\tf) & \Xi_l^T F_{c c}(\tf) \Xi_l
 \end{pmatrix}
\right)
\end{align*}
depend heavily on the weights $\theta$ of the ANN.
Even for the simpler case $n_{\hat p} = 0$ it is not clear how the scalar $\phi(\Xi_l^T F_{c c} \; \Xi_l) = \Xi_l^T F_{c c} \; \Xi_l$ and the value $\phi(F_{cc})$ are related.
The situation is clearer for the SVD approach. Here the decompositions 
$F(\tf) = F_{c c}(\tf) = \hat V \hat D \hat V^T = \sum_{i=1}^{n_\theta} \lambda_i v_i v_i^T$
and
$F_\svdu{n_s}(\tf) = V D V^T = \sum_{i=1}^{n_s} \lambda_i v_i v_i^T$
with eigenvalues $\lambda_i$ assumed to be positive can be used to derive
\begin{align}
 \phi_A(F(\tf)) - \phi_A(F_\svdu{n_s}(\tf)) &= \frac{1}{n_\theta} \sum_{i=1}^{n_\theta} \frac{1}{\lambda_i} - \frac{1}{n_s} \sum_{i=1}^{n_s} \frac{1}{\lambda_i} \label{eq_delta_phi_A} \\ 
 \phi_D(F(\tf)) - \phi_D(F_\svdu{n_s}(\tf)) &= \prod_{i=1}^{n_\theta} \frac{1}{\lambda_i^{1 / n_\theta}} - \prod_{i=1}^{n_s} \frac{1}{\lambda_i^{1 / n_s}} \label{eq_delta_phi_D} \\
 \phi_E(F(\tf)) - \phi_E(F_\svdu{n_s}(\tf)) &= \max_{1 \le i \le n_\theta} \frac{1}{\lambda_i} - \max_{1 \le i \le n_s} \frac{1}{\lambda_i} = \frac{1}{\lambda_{n_\theta}} - \frac{1}{\lambda_{n_s}} \ge 0 \label{eq_delta_phi_E}
\end{align}

As described in the literature, e.g., in \cite{barz2015nonlinear}, and plausible from the equations (\ref{eq_delta_phi_A}-\ref{eq_delta_phi_E}), the objective function $\phi_E$ is determined only by the smallest singular value $\lambda_{n_s}$, while the trace $\phi_A$ as the sum of the inverse singular values is dominated by several of the smallest eigenvalues, and the determinant $\phi_D$ is sensitive to the larger singular values. 
Given the specific situation of overparameterized ANN weights, this encourages the use of $\phi_D$ rather than $\phi_A$ or $\phi_E$.

These estimations, however, are based on identical $u^*$ and $w^*$, which is typically not the case due to the modification of the optimization problem. Already, the question how $\phi(F_\svdu{n_s}(\tf))$ changes with $w$ is nontrivial. Here, error bounds for the truncated singular value decomposition \cite{vu2021perturbation} or for eigenvalues of sums of Hermitian matrices as investigated by Knutson and Tao \cite{knutson2001honeycombs} might be an interesting line of research, yet beyond the scope of this paper.
We are also interested in the sensitivity of the resulting experiment as parameterized via $u$ and $w$.
Given the difficulty of this task, we focus on the sensitivity of $w^*$ with an analysis based on local necessary conditions of optimality. 
This is facilitated by the fact that in the continuous formulation well-posed OED control problems have bang-bang solutions in $w$, i.e., we have $w^*_i(t) \in \{0, 1\}$ for $t$ almost everywhere.
In \cite{Sager2013}, application of Pontryagin's maximum principle to \eqref{OED} resulted in the concept of information gain.

\begin{definition} {\bf{(Local and global information gain)}} \\%
The maps $P^i: \mathcal T \mapsto \in \R^{n_p \times n_p}$ for $i \in [n_y]$ defined with matrices
$$P^i(t) := \left( h_x^i(x(t)) G(t) \right)^T \left( h_x^i(x(t)) G(t) \right)
$$
are called {\emph{local information gain}}. Note that all $P^i(t)$ are positive semi-definite, and positive definite if the matrix $h_x^i(x(t)) G(t))$ has full rank $n_p$.

If $F^{-1}(\tf)$ exists, we call the matrices $\Pi: \mathcal T \mapsto \in \R^{n_p \times n_p}$ defined by
$$\Pi^i(t) := F^{-1}(\tf) P^i(t) F^{-1}(\tf)  \in \R^{n_p \times n_p}$$
the {\emph{global information gain}} of measurement function $i$.
\label{DInformationGain}
\end{definition}

\begin{lemma}{\bf{(Minimize trace of covariance matrix)}}
Let $\phi = \phi_A$ be the objective function of \eqref{OED}, let $w^*(\cdot)$ be an optimal control function, and let $\mu^*$ be the vector of Lagrange multipliers of the constraints $z_i(\tf) \le M_i$. 
If $w_i^*(t) = 1$, then
$$\frac{1}{n_p} \trace \left( \Pi^i(t) \right) \ge \mu_i^*.$$
\label{TCovarianceTrace}
\end{lemma}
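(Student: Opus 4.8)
The plan is to apply Pontryagin's maximum principle to the optimal control problem \eqref{OED} with $\phi = \phi_A$ and to examine the switching function associated with the bang-bang control $w$. First I would set up the Hamiltonian of \eqref{OED}, introducing adjoint variables for each differential state: a matrix-valued adjoint $\Lambda_F$ for the $\dot F$ equation, an adjoint $\lambda_z$ for $\dot z = w$, and adjoints for $\dot x$ and $\dot G$ (which will not matter for the $w$-dependence since the right-hand sides of $\dot x$ and $\dot G$ do not involve $w$). The only terms in the Hamiltonian depending on $w_i(t)$ are $\langle \Lambda_F(t), w_i(t)\, P^i(t)\rangle$ coming from the $\dot F$ dynamics and $\lambda_{z,i}(t)\, w_i(t)$ from $\dot z$, where I am writing $P^i(t) = (h^i_x(x(t))G(t))^\top(h^i_x(x(t))G(t))$ as in Definition~\ref{DInformationGain}.

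Next I would identify the adjoint boundary/transversality conditions. Since $\phi_A(F(\tf)) = \frac{1}{n_p}\tr(F(\tf)^{-1})$ enters only through the terminal cost, the adjoint for $F$ satisfies a terminal condition $\Lambda_F(\tf) = \partial \phi_A / \partial F(\tf)$; using the identity $\partial\, \tr(F^{-1}) / \partial F = -F^{-\top} = -F^{-1}$ (as $F$ is symmetric), this gives $\Lambda_F(\tf) = -\frac{1}{n_p}F^{-1}(\tf)$. Because $F$ does not appear on the right-hand side of any differential equation, the adjoint equation $\dot\Lambda_F = -\partial H/\partial F$ reduces to $\dot\Lambda_F(t) = 0$, so $\Lambda_F(t) \equiv -\frac{1}{n_p}F^{-1}(\tf)$ for all $t$. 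Similarly $z$ does not appear in any right-hand side, so $\lambda_z$ is constant; the constraint $z_i(\tf)\le M_i$ contributes its multiplier $\mu_i^* \ge 0$ to the transversality condition, giving $\lambda_{z,i}(t) \equiv \mu_i^*$ (with a sign I would fix carefully by the chosen Hamiltonian convention).

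Then I would write out the switching function. The contribution of $w_i$ to the (to-be-minimized) Hamiltonian is
\begin{align*}
\sigma_i(t) := \langle \Lambda_F(t), P^i(t)\rangle + \lambda_{z,i}(t)
= -\frac{1}{n_p}\tr\!\left(F^{-1}(\tf) P^i(t)\right) + \mu_i^*.
\end{align*}
Pontryagin's principle dictates that $w_i^*(t) = 1$ can only occur when $\sigma_i(t) \le 0$ (the control is chosen to minimize the Hamiltonian, subject to $w_i \in \{0,1\}$ via $\mathcal W$). Hence $w_i^*(t) = 1$ forces $\frac{1}{n_p}\tr(F^{-1}(\tf) P^i(t)) \ge \mu_i^*$. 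Finally, I would rewrite the left-hand side using $F^{-1}(\tf) P^i(t) = F^{-1}(\tf) P^i(t) F^{-1}(\tf) \cdot F(\tf)$ together with the cyclic property of the trace — or, more directly, since $\tr(F^{-1} P^i) = \tr(F^{-1} P^i F^{-1} F)$ and one can symmetrize — to recognize $\tr(F^{-1}(\tf)P^i(t)) = \tr(\Pi^i(t) F(\tf))$... but in fact the cleaner route is: by the cyclic property $\tr(F^{-1}P^i) = \tr(F^{-1/2}P^i F^{-1/2})$, and since we want $\tr(\Pi^i) = \tr(F^{-1}P^iF^{-1})$, I should double-check the exact criterion; the statement as written matches the A-criterion derivative, so I would reconcile this by noting that for $\phi_A$ the relevant quantity is indeed $\tr(\Pi^i(t))$ if one differentiates $\phi_A$ with respect to the \emph{information} contributed rather than $F$ directly — i.e. a chain-rule through $F^{-1}$.

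**The main obstacle** I anticipate is getting the transversality and sign conventions exactly right so that the quantity appearing is $\tr(\Pi^i(t)) = \tr(F^{-1}(\tf)P^i(t)F^{-1}(\tf))$ rather than $\tr(F^{-1}(\tf)P^i(t))$. The resolution is that the Mayer objective should be handled by introducing $\phi_A$ directly into the Hamiltonian's terminal cost and carrying the adjoint $\Lambda_F(t)\equiv \partial\phi_A/\partial F(\tf)$ through; differentiating $\phi_A = \frac1{n_p}\tr(F^{-1})$ gives $\partial\phi_A/\partial F(\tf) = -\frac1{n_p}F^{-2}(\tf) = -\frac1{n_p}F^{-1}(\tf)F^{-1}(\tf)$ (using symmetry of $F$), so that $\langle \Lambda_F(\tf), P^i(t)\rangle = -\frac1{n_p}\tr(F^{-1}(\tf)F^{-1}(\tf)P^i(t)) = -\frac1{n_p}\tr(F^{-1}(\tf)P^i(t)F^{-1}(\tf)) = -\frac1{n_p}\tr(\Pi^i(t))$ by cyclicity of the trace. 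That is precisely what is needed. I would then cite \cite{Sager2013} for the detailed verification of the maximum principle applicability (existence of multipliers, the bang-bang structure of $w^*$, constraint qualification for $z_i(\tf)\le M_i$), keeping the present proof at the level of extracting the stationarity condition for $w_i$ and simplifying.
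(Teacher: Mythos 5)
Your proposal is correct and follows essentially the same route as the paper: the paper's proof is exactly an application of Pontryagin's maximum principle to the special structure of \eqref{OED} (constant adjoints for $F$ and $z$, transversality giving $\Lambda_F(\tf)=\partial\phi_A/\partial F(\tf)$ and $\lambda_{z,i}\equiv\mu_i^*$) combined with the matrix derivative of the trace, with details deferred to \cite{Sager2013}. Note only that your intermediate claim $\partial \tr(F^{-1})/\partial F = -F^{-1}$ is a slip (it is $-F^{-2}$ for symmetric $F$), which you yourself correct in the final paragraph, where the chain rule through $F^{-1}$ produces the required quantity $\frac{1}{n_p}\tr\left(\Pi^i(t)\right)=\frac{1}{n_p}\tr\left(F^{-1}(\tf)P^i(t)F^{-1}(\tf)\right)$.
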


\begin{lemma}{\bf{(Minimize determinant of covariance matrix)}}
Let $\phi = \phi_D = (\det(F^{-1}(\tf)))^{\frac{1}{n_p}}$ be the objective function of \eqref{OED}, let $w^*(\cdot)$ be an optimal control function, and let $\mu^*$ be the vector of Lagrange multipliers of the constraints $z_i(\tf) \le M_i$. 
If $w_i^*(t) = 1$, then
$$(\det(F^{-1}(\tf) ))^{\frac{1}{n_p}} \; \sum_{k,l = 1}^{n_p} (F(\tf))_{kl} (\Pi^i(t))_{kl} \ge \mu_i^*.$$
\label{TCovarianceDet}
\end{lemma}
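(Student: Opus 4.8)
The plan is to apply Pontryagin's maximum principle to \eqref{OED} along the same lines as the proof of Lemma~\ref{TCovarianceTrace} (cf.\ \cite{Sager2013}); the choice $\phi=\phi_D$ enters only through the transversality condition for the adjoint of $F$. First I would introduce adjoint variables $\lambda_x,\lambda_G,\lambda_F,\lambda_z$ for the four state equations of \eqref{OED} (with $\lambda_F$ symmetric-matrix-valued and $\lambda_z$ vector-valued) and form the Hamiltonian
\begin{align*}
H = \lambda_x^\top f + \langle \lambda_G,\, f_x G + f_p\rangle + \Big\langle \lambda_F,\, \sum_{i=1}^{n_y} w_i\,(h^i_x(x) G)^\top (h^i_x(x) G)\Big\rangle + \lambda_z^\top w .
\end{align*}
The structural fact that makes the argument short is that neither $F$ nor $z$ appears on any right-hand side in \eqref{OED}, so $\dot\lambda_F=-\partial_F H=0$ and $\dot\lambda_z=-\partial_z H=0$; hence both adjoints are constant in time and equal to their terminal values. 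By the transversality conditions for the Mayer term $\phi_D(F(\tf))$ and the terminal constraints $z_i(\tf)\le M_i$ with multipliers $\mu^*\ge 0$ and complementary slackness, this gives $\lambda_F(t)\equiv \partial_F\phi_D(F(\tf))$ and $\lambda_z(t)\equiv \mu^*$ for all $t\in\mathcal{T}$.

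Next I would evaluate the matrix derivative of the objective: using Jacobi's formula $\partial_F\det F = \det(F)\,F^{-1}$ (valid since $F$ is symmetric) together with the chain rule,
\begin{align*}
\partial_F\phi_D(F) = \partial_F\big(\det(F)^{-1/n_p}\big) = -\tfrac{1}{n_p}\big(\det(F^{-1})\big)^{1/n_p} F^{-1}.
\end{align*}
Since $w$ enters $H$ affinely, optimality of $w^*$ reduces to a sign condition on the switching functions $\sigma_i(t)=\langle\lambda_F,P^i(t)\rangle+\mu_i^*$, with $P^i(t)=(h^i_x(x(t))G(t))^\top(h^i_x(x(t))G(t))$ the local information gain of Definition~\ref{DInformationGain}: invoking the bang-bang structure of $w^*$ established in \cite{Sager2013}, on the set $\{t:w^*_i(t)=1\}$ we must have $\sigma_i(t)\le 0$. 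Substituting the constant value of $\lambda_F$ and rearranging yields the asserted inequality, once the scalar $\langle F^{-1}(\tf),P^i(t)\rangle$ is rewritten via cyclicity of the trace and the definition $\Pi^i(t)=F^{-1}(\tf)P^i(t)F^{-1}(\tf)$ as
\begin{align*}
\big\langle F^{-1}(\tf),P^i(t)\big\rangle = \tr\big(F^{-1}(\tf)P^i(t)\big) = \tr\big(F(\tf)\,\Pi^i(t)\big) = \sum_{k,l=1}^{n_p}(F(\tf))_{kl}(\Pi^i(t))_{kl},
\end{align*}
where symmetry of $F(\tf)$ and $\Pi^i(t)$ is used in the last step.

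The main obstacle I anticipate is sign and normalization bookkeeping: keeping consistent the direction of optimization in the maximum principle, the sign of $\mu^*\ge 0$ arising from the inequality terminal constraint together with complementary slackness, and the sign and constant produced by $\partial_F\phi_D$, so that the final inequality points in the right direction and the scalar factor matches the statement and is consistent with Lemma~\ref{TCovarianceTrace}. Beyond that, one must justify working with the matrix-valued adjoint $\lambda_F$ on the space of symmetric matrices with a consistent trace inner product, and carefully invoke the bang-bang property of $w^*$ from \cite{Sager2013}, since it is precisely what turns the affine-in-$w$ structure of $H$ into the pointwise switching condition used above; the remaining steps are routine matrix calculus.
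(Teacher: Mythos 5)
Your proposal takes essentially the same route as the paper, whose proof simply invokes the matrix derivative formulas for the trace and determinant together with an analysis of Pontryagin's maximum principle for the structure of \eqref{OED} (details deferred to the cited reference); your adjoint/transversality/switching-function argument, with constant $\lambda_F$, $\lambda_z$ and the trace identity $\tr(F^{-1}(\tf)P^i(t)) = \sum_{k,l}(F(\tf))_{kl}(\Pi^i(t))_{kl}$, is exactly that analysis made explicit. The only bookkeeping point you flag resolves harmlessly: differentiating through the $n_p$-th root gives an extra factor $\frac{1}{n_p}$ on the left-hand side, which yields a bound at least as strong as the stated inequality (the paper's quoted formula is for $\det(A)$ itself, without the root).
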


The proofs follow from
$\frac{\partial \trace(A)}{\partial A} \Delta A = \trace( \Delta A )$ and
$\frac{\partial \det(A)}{\partial A} \Delta A = \det(A) \sum_{k,l = 1}^{n_p} A^{-1}_{kl} \Delta A_{kl}$
for symmetric, positive definite matrices $A \in \R^{n \times n}$ and from analysis of Pontryagin's maximum principle for the particular structures of \eqref{OED}, see \cite{Sager2013} for details and a similar result for $\phi_E$.

These results allow using the global information gains for an analysis of changes in $w^*$ for different approximations of sensitivities.
Of particular interest is the question of how the information gain changes with the number $n_s$ of singular values.
To apply Lemmata~\ref{TCovarianceTrace} and \ref{TCovarianceDet} for different values of $n_s$ in
\begin{align}
\Pi_\svdu{n_s}^i(t) := F_\svdu{n_s}^{-1}(\tf) \left( h_x^i(x(t)) G_\svdu{n_s}(t) \right)^T h_x^i(x(t)) G_\svdu{n_s}(t) F_\svdu{n_s}^{-1}(\tf), \label{eq_PISVD}
\end{align}
a re-scaling is convenient, though. Although the number of model parameters $n_p$ is considered in the OED criteria, e.g., 
$\phi_A(F(\tf)) = \frac{1}{n_p} \tr(F(\tf)^{-1})$ and
$\phi_D(F(\tf)) = \det(F(\tf)^{-1})^{\frac{1}{n_p}}$,
the objective function may change significantly when additional singular values are taken into account. 
It is well known that the Lagrange multiplier $\mu_i$ of the constraint $z_i(\tf) \le M_i$ represents the derivative of the objective function value with respect to small changes in $M_i$. In other words: how much more information would be obtained by measuring just a little bit more. As this value should be independent of the hyperparameter $n_s$ used to calculate the optimal design, it makes sense to define the scaling factor
\begin{align}
\gamma^{D}_{n_s} := \frac{\phi_D(F_{\svdu{n_t}}(\tf))}{\phi_D(F_{\svdu{n_s}}(\tf))} 
= \frac{\prod_{k=1}^{n_t} \frac{1}{\lambda_k}^{1/n_t}}{\prod_{k=1}^{n_s} \frac{1}{\lambda_k}^{1/n_s}}
= \prod_{k=1}^{n_s} \lambda_k^{\left(\frac{1}{n_s}-\frac{1}{n_t}\right)} \; \prod_{k=n_s+1}^{n_t} \lambda_k^{-\frac{1}{n_t}}
\end{align}
for the D-criterion and a fixed value $n_t > n_s$ of positive singular values.
However, as both sides of the inequalities in Lemmata~\ref{TCovarianceTrace} and \ref{TCovarianceDet} are multiplied, any scaling is valid for an analysis of the optimal sampling function.
Thus we can also use
\begin{align}
\gamma^{A}_{n_s} := 
\frac{n_s}{n_t}
\end{align}
for the linear A-criterion, which also provides a scaling that allows to infer the convergence of optimal samplings as $n_s \rightarrow n_t$. 
The main difference is that $\gamma^{A}_{n_s} \mu^*_i$ is going to converge towards the multiplier for $\phi_A(\svdu{n_t})$, while for all $n_s$ the value $\gamma^{D}_{n_s} \mu^*_i$ is an approximation of the multiplier for $\phi_D(\svdu{n_t})$.
With the scaling factors above we define the functions
\begin{align}
\Gamma^{D,i}_{n_s}(t) 
& := \gamma^{D}_{n_s} \; (\det(F_{\svdu{n_s}}(\tf)^{-1} ))^{\frac{1}{n_s}} \; \sum_{k,l = 1}^{n_s} (F_{\svdu{n_s}}(\tf))_{kl} (\Pi^i(t))_{kl} \label{eq_GammaDSVD}
\end{align}
and
\begin{align}
\Gamma^{A,i}_{n_s}(t) := \frac{\gamma^{A}_{n_s}}{n_s} \trace \left( \Pi_\svdu{n_s}^i(t) \right) \label{eq_GammaASVD}
\end{align}
and formulate the following result.
\begin{lemma}{\bf{(Global information gain for different numbers of singular values)}}
Let a SVD of the matrix $F(\tf)$ be given with decreasing singular values $\lambda_k > 0$ strictly positive for all $k \in [n_t]$. Let $1 \le n_s < n_t$. 
Then with the notation from above
\begin{align}
\Gamma^{D,i}_{n_t}(t) - \Gamma^{D,i}_{n_s}(t) = \phi_D(F_{\svdu{n_t}}) \displaystyle \sum_{k=n_s+1}^{n_t} P^i_{kk}(t) \lambda_k^{-1}
\end{align}
and
\begin{align}
\Gamma^{A,i}_{n_t}(t) - \Gamma^{A,i}_{n_s}(t) = \frac{1}{n_t} \displaystyle \sum_{k=n_s+1}^{n_t} P^i_{kk}(t) \lambda_k^{-2}.
\end{align}
\label{lem_convergence}
\end{lemma}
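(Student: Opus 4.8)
The plan is to express every quantity appearing in \eqref{eq_GammaDSVD} and \eqref{eq_GammaASVD} in the common eigenbasis of $F(\tf)=F_{cc}(\tf)$; this reduces all objects to explicit functions of the eigenvalues $\lambda_k$ and of the diagonal local information gains, after which both identities fall out by a telescoping cancellation. I would carry this out in three steps.

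\emph{Step 1 (diagonal normal form).} Fix one orthonormal eigenbasis $\hat V=[v_1,\dots,v_{n_\theta}]$ of $F_{cc}(\tf)$ ordered by the decreasing $\lambda_k$, and for $m\in\{n_s,n_t\}$ let $V_m$ collect its first $m$ columns, so that $V_{n_s}$ is the leading block of $V_{n_t}$. By Lemma~\ref{lem:sensitivitiesSVD}, $F_{\svdu{m}}(\tf)=V_m^\top F_{cc}(\tf)V_m=\mathrm{diag}(\lambda_1,\dots,\lambda_m)$ and $G_{\svdu{m}}(t)=G_c(t)V_m$ for $t$ almost everywhere. Consequently the reduced local information gain is $P^i_{\svdu{m}}(t)=(h^i_x(x(t))G_{\svdu{m}}(t))^\top(h^i_x(x(t))G_{\svdu{m}}(t))=V_m^\top P^i(t)V_m$, whose diagonal entries $P^i_{kk}(t)=\|h^i_x(x(t))G_c(t)v_k\|^2\ge 0$ are independent of $m$ for $k\le m$. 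Since $\lambda_k>0$ for $k\in[n_t]$, both $F_{\svdu{n_s}}(\tf)$ and $F_{\svdu{n_t}}(\tf)$ are invertible, and the reduced global information gain $\Pi^i_{\svdu{m}}(t)=F_{\svdu{m}}^{-1}(\tf)P^i_{\svdu{m}}(t)F_{\svdu{m}}^{-1}(\tf)$ — the object denoted $\Pi^i_{\svdu{n_s}}$ in \eqref{eq_GammaASVD}, and the sensible reading of $\Pi^i$ in \eqref{eq_GammaDSVD}, cf.\ Definition~\ref{DInformationGain} applied to the regular reduced FIM — has entries $(\Pi^i_{\svdu{m}}(t))_{kl}=P^i_{kl}(t)/(\lambda_k\lambda_l)$ in this basis.

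\emph{Step 2 (D-criterion).} The key point is that the scalar prefactor telescopes: $(\det(F_{\svdu{n_s}}(\tf)^{-1}))^{1/n_s}=\prod_{k=1}^{n_s}\lambda_k^{-1/n_s}=\phi_D(F_{\svdu{n_s}}(\tf))$, hence $\gamma^D_{n_s}\,(\det(F_{\svdu{n_s}}(\tf)^{-1}))^{1/n_s}=\phi_D(F_{\svdu{n_t}}(\tf))$, which does not depend on $n_s$. Because $F_{\svdu{n_s}}(\tf)$ is diagonal, only the diagonal entries $(\Pi^i(t))_{kk}=P^i_{kk}(t)/\lambda_k^2$ enter the bilinear term in \eqref{eq_GammaDSVD} (so it is immaterial from which truncation $\Pi^i$ is taken), giving $\sum_{k,l=1}^{n_s}(F_{\svdu{n_s}}(\tf))_{kl}(\Pi^i(t))_{kl}=\sum_{k=1}^{n_s}\lambda_k\,P^i_{kk}(t)/\lambda_k^2=\sum_{k=1}^{n_s}P^i_{kk}(t)/\lambda_k$, equivalently $\trace(F_{\svdu{n_s}}(\tf)\Pi^i_{\svdu{n_s}}(t))=\trace(P^i_{\svdu{n_s}}(t)F_{\svdu{n_s}}^{-1}(\tf))$. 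Therefore $\Gamma^{D,i}_{n_s}(t)=\phi_D(F_{\svdu{n_t}}(\tf))\sum_{k=1}^{n_s}P^i_{kk}(t)/\lambda_k$, and subtracting this for $n_s$ from the same expression for $n_t$ leaves exactly $\phi_D(F_{\svdu{n_t}})\sum_{k=n_s+1}^{n_t}P^i_{kk}(t)\lambda_k^{-1}$.

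\emph{Step 3 (A-criterion) and the main obstacle.} Here $\gamma^A_{n_s}/n_s=1/n_t$ is again $n_s$-independent, and $\trace(\Pi^i_{\svdu{n_s}}(t))=\sum_{k=1}^{n_s}(\Pi^i_{\svdu{n_s}}(t))_{kk}=\sum_{k=1}^{n_s}P^i_{kk}(t)/\lambda_k^2$, so $\Gamma^{A,i}_{n_s}(t)=\tfrac{1}{n_t}\sum_{k=1}^{n_s}P^i_{kk}(t)/\lambda_k^2$ and the difference telescopes to $\tfrac{1}{n_t}\sum_{k=n_s+1}^{n_t}P^i_{kk}(t)\lambda_k^{-2}$. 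The computations themselves are routine linear algebra; the one genuine subtlety — and the only place the statement could fail — is the bookkeeping of $\Pi^i$ and of the coefficients $P^i_{kk}(t)$ across the two truncations. One must read $\Pi^i$ in \eqref{eq_GammaDSVD} as the reduced global information gain $\Pi^i_{\svdu{n_s}}(t)$ of the \emph{regular} matrix $F_{\svdu{n_s}}(\tf)$, not as a literal instance of Definition~\ref{DInformationGain} for the possibly singular $F(\tf)$, and verify that the diagonal coefficient $P^i_{kk}(t)$ is genuinely the \emph{same} quantity whether extracted within the $n_s$- or the $n_t$-truncation; this holds precisely because all $F_{\svdu{m}}(\tf)$ are diagonalized by one fixed $\hat V$, so that restriction to the leading $m\times m$ block is consistent as $m$ increases. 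A minor point is that, inheriting the a.e.\ validity of $G_{\svdu{m}}(t)=G_c(t)V_m$ from Lemma~\ref{lem:sensitivitiesSVD}, the identities hold only for $t$ almost everywhere in $\mathcal T$.
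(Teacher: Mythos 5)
Your proof is correct and follows essentially the same route as the paper: cancel $\gamma^{D}_{n_s}$ against $\phi_D(F_{\svdu{n_s}}(\tf))$, use the diagonality of $F_{\svdu{n_s}}(\tf)$ together with $(\Pi^i(t))_{kl}=P^i_{kl}(t)/(\lambda_k\lambda_l)$ to collapse the bilinear sum to $\sum_{k=1}^{n_s}P^i_{kk}(t)\lambda_k^{-1}$ (and $\trace(\Pi^i_{\svdu{n_s}}(t))=\sum_{k=1}^{n_s}P^i_{kk}(t)\lambda_k^{-2}$ for the A-criterion), then subtract and telescope. Your additional bookkeeping --- reading $\Pi^i$ as the reduced global information gain of the regular truncated FIM, noting that the $P^i_{kk}(t)$ agree across truncations because all reductions use one fixed eigenbasis, and flagging the a.e.\ validity inherited from Lemma~\ref{lem:sensitivitiesSVD} --- simply makes explicit what the paper's proof leaves implicit.
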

\begin{proof}
First we observe that
\begin{align*}
\Gamma^{D,i}_{n_s}(t) 
& = \gamma^{D}_{n_s} \; \phi_D(F_{\svdu{n_s}}(\tf)) \; \sum_{k,l = 1}^{n_s} (F_{\svdu{n_s}}(\tf))_{kl} (\Pi(t))_{kl} \\
& = \frac{\phi_D(F_{\svdu{n_t}}(\tf))}{\phi_D(F_{\svdu{n_s}}(\tf))} \phi_D(F_{\svdu{n_s}}(\tf)) \; \sum_{k,l = 1}^{n_s} (F_{\svdu{n_s}}(\tf))_{kl} (\Pi(t))_{kl} \\
& = \phi_D(F_{\svdu{n_t}}(\tf)) \; \sum_{k,l = 1}^{n_s} (F_{\svdu{n_s}}(\tf))_{kl} \lambda_k^{-1} P^i_{kl}(t) \lambda_l^{-1} \\
& = \phi_D(F_{\svdu{n_t}}(\tf)) \; \sum_{k = 1}^{n_s} \lambda_k^{-1} P^i_{kk}(t).
\end{align*}
This readily gives the claimed result
\begin{align*}
\Gamma^{D,i}_{n_t}(t) - \Gamma^{D,i}_{n_s}(t) 
& = \phi_D(F_{\svdu{n_t}}) \displaystyle \sum_{k=n_s+1}^{n_t} P^i_{kk}(t) \lambda_k^{-1}.
\end{align*}
For the A-criterion we calculate
\begin{align*}
\Gamma^{A,i}_{n_t}(t) - \Gamma^{A,i}_{n_s}(t) 
& = \frac{\gamma^{A}_{n_t}}{n_t} \trace \left( \Pi_\svdu{n_t}^i(t) \right) - \frac{\gamma^{A}_{n_s}}{n_s} \trace \left( \Pi_\svdu{n_s}^i(t) \right) \\
& = \frac{1}{n_t} \displaystyle \sum_{k=n_t}^{n_t} P^i_{kk}(t) \lambda_k^{-2} - \frac{1}{n_t} \displaystyle \sum_{k=1}^{n_s} P^i_{kk}(t) \lambda_k^{-2} \\
& = \frac{1}{n_t} \displaystyle \sum_{k=n_s+1}^{n_t} P^i_{kk}(t) \lambda_k^{-2}
\end{align*}
which finishes the proof.
\end{proof}

Lemma~\ref{lem_convergence} with positive summands shows a monotonic convergence of the global information gain criteria, as $n_s$ increases. The higher order $\lambda_k^{-2}$ for the A-criterion is an additional indication that the D-criterion might be better suited for SVD approximations with small singular values.

The functions $\Gamma^{D,i}_{n_s}(t)$ and the dual variables $\gamma^D_{n_s} \mu_i$ can be useful for numerical insight and for heuristics. 
For example, small values of $\| \Gamma^{A,i}_{n_s} - \Gamma^{A,i}_{n_s+1} \|$ in an appropriate function space norm might provide an alternative to the L-curve truncation criterion, as it implies that the global information gain for $a=\svdu{n_s}$ and $a=\svdu{n_s+1}$ is similar and results hence in similar optimal samplings $w^*$.
One line of future research beyond the scope of this paper is the question of how often and when during an optimization new SVD should be calculated, given that the SVD accuracy depends on the current optimization iterate. A comparison of \eqref{eq_GammaDSVD} for different values of $n_s$ might also be a promising heuristic for this purpose.
Another line of future work is that the information gain functions might be used for penalizations in the training of overfitted ANNs in sequential (online) OED for UDE, resulting in weights $\theta$ that perform similarly well in the training, but are regularized in a way such that an optimal sampling design is closer to that of more accurate but computationally more expensive approaches.

\section{Numerical results}\label{sec:results}

We provide implementation details of the novel methods and study illustrative UDE benchmark problems.



\subsection{Implementation}

Our code is implemented in \texttt{Julia} \cite{bezanson2017julia}. It uses the packages \texttt{Lux} \cite{pal2023lux} for modeling, training, and evaluation of ANN and \texttt{DynamicOED.jl} \cite{Martensen2024c} for setting up the OED problems, e.g., augmenting the user-defined dynamical system by VDE for the sensitivities $G$ and the differential equations to evaluate the FIM $F(\tf)$, its inverse, and objective functions $\phi$. 
Several approaches for reducing computational effort have been implemented. For example, for fixed controls $u$ and initial values $x_0$, i.e. for OED problems where only the sampling decisions $w$ remain as degrees of freedom for the optimization, the states $x$ and the sensitivities $G$ are only computed once. 
The sampling decisions $w(t) \in \Omega = \{0,1\}^{n_y}$ can be relaxed to $w(t) \in \textrm{conv } \Omega = [0,1]^{n_y}$, allowing for an efficient solution via local nonlinear programming solvers such as \texttt{Ipopt} \cite{Waechter2006}. 
A theoretical justification for the relaxation was given in \cite{Sager2013}. Optimal solutions for relaxed sampling decisions on well-chosen time discretization grids typically already fulfill integrality due to a bang-bang property of $w$.



The implemented sequential procedure that mimics application of OED to obtain data from real experiments is identical and can be summarized as follows.
\begin{algo} {\bf{(General sequential evaluation procedure)}} \label{algo_sequential}%
\begin{enumerate}
\setlength\itemsep{-0.1em}
 \item Fix $\hat p$. Create synthetical data based on $\bar{f}(x, \hat p)$ and train $\myU(x, \theta)$ to obtain an initial guess $\bar \theta$
 \item Solve \eqref{OED} for $\bar p = (\hat p, \bar \theta)$ and obtain optimal controls and samplings $(u^*, w^*)$
 \item Create synthetical data based on the values of $w^*_i(t) \in [0,1]$ and on the observation functions $h^i(x^*(t))$  
 \item Solve a combined parameter estimation and ANN training problem to obtain updated values for $(\hat p, \theta)$
 \item Evaluate performance criterion
\end{enumerate}
\end{algo}

\noindent
In step 1., weights $\theta$ may already be available as the result of an initial training with existing real or synthetic data. In this study, we generate synthetic data from the ground truth model and perform initial training to obtain a function $\myU(x,\theta)$ such that the initial value problem \eqref{f_hat} has a unique solution. 

In some scenarios, the degrees of freedom in step 2. are restricted, e.g., to compare results for the cases where the control $u$ is fixed or the sampling is equidistant and not a degree of freedom. If not otherwise stated, we use the A-criterion $\phi_A$ for optimization.

In step 3. of Algorithm~\ref{algo_sequential}, we evaluate $h^i(x^*(t_j))$ at times $t_j$ determined as follows. In the direct approach we use to solve \eqref{OED}, $w(t)$ is discretized as $w_{i,j} \in [0,1]$ for $i \in n_y$ and $j \in [N]$  with $N \in \mathbb N$ being the number of discretization intervals. We normalize the optimal discretized solution $w^*_{i,j}$ by dividing by the corresponding bound $M_i$ on the measurements for each measurement function $h^i$, i.e.,
$\bar{w}^*_{i,j} = w^*_{i,j} / M_i, \; i \in [n_y], \, j \in [N]$ and interpret the elements of $\bar{w}^*$ as probabilities. For each measurement function we then sample the wanted number of measurement time points without replacement. A discrete measurement time point for measurement function $h^i$ is added at $\frac{t_{j-1}+t_j}{2}$ if $w_{i,j}$ was selected, where $t_{i-1}$ and $t_i$ are the start and end points of the $i$-th discretization interval, for $i \in [N]$.
The values $h^i(x^*(t_j))$ are then modified with normally distributed noise.

Concerning step 4., efficient algorithms for the concurrent estimation of model parameters and weights are an open research topic. For the small-scale illustrative hybrid benchmark examples in this paper, we apply an iterative heuristic procedure: we use the two different approaches alternatingly by fixing either $\hat p$ or $\theta$ to the most recent estimation, until the model parameters $\hat p$ do not change any more. This allows to use standard approaches for the estimation of $\hat p$ and the training of $\theta$, respectively. For all parameter estimation problems we used a standard Gauß-Newton algorithm, while for the training of the ANN we used the \texttt{Adam} algorithm \cite{Kingma2014}.
To compensate the stochastic nature of ANN training, we averaged over five training runs from the same initial $\theta$.

In step 5., the performance of a particular OED approach on the outcome of a follow-up parameter estimation / model training problem is evaluated. We did this using different approaches: a) by comparing parameter values and their uncertainty directly, b) by using the OED objective functions $\phi$ as an indicator for the uncertainty, c) based on simulations of the posterior distributions of the differential states $x$, and d) by comparing the absolute error between ANN $\myU(x, \theta)$ and the approximated ground-truth terms $\bar{f}(x, \hat p)$ of the differential equation models over a domain of interest $x \in \mathcal X$.

\subsection{Lotka-Volterra} \label{sec_lotka}

The Lotka-Volterra equations are commonly used to describe population dynamics of interacting species, typically in a predator-prey relationship. We will use an extended version involving a fishing control $u$ that was suggested to obtain a benchmark optimal control problem \cite{Sager2006}.
The ODE is given by
\begin{align}
	\begin{split}
		\dot x_1(t) &=  x_1(t) - \hat p_1 x_1(t) x_2(t) - \hat p_2 u(t) x_1(t), \\
		\dot x_2(t) &= -x_2(t) + \hat p_3 x_1(t) x_2(t) - \hat p_4 u(t) x_2(t), \\
		x(0)	&= x_0
	\end{split}
	\label{lotka}
\end{align}
with a time horizon $\mathcal T = [0, 12]$, model parameters $\hat p_1 = 1.0, \, \hat p_2 =0.4, \, \hat p_3=1.0, \, \hat p_4=0.2$, a control function $u(t) \in [0,1]$ and fixed initial values $x_0=(0.7,0.5)$. 
We assume that we can directly measure the states separately via $h^1(x(t)) = x_1(t), \; h^2(x(t))=x_2(t)$ with a limit of $M_1=M_2=4$ time units each.

To study hybrid modeling, we assume the interaction terms $x_1(t) x_2(t)$ to be unknown. We replace it in \eqref{lotka} with an ANN $\myU : \mathbb{R}^2 \times \mathbb{R}^{n_\theta} \mapsto \mathbb{R}$ and obtain
\begin{align}
	\begin{split}
		\dot x_1(t) &=  x_1(t) - \myU(x(t), \theta) - \hat p_2 u(t) x_1(t), \\
		\dot x_2(t) &= -x_2(t) + \myU(x(t), \theta) - \hat p_4 u(t) x_2(t), \\
		x(0)	&= x_0
	\end{split}
	\label{lotka_hybrid}
\end{align}

The ANN $\myU$ has two hidden layers with ten neurons each, using the \textit{tanh} activation function $\textrm{tanh}(x) = \frac{\exp(x)-\exp(-x)}{\exp(x)+\exp(-x)}$ in all hidden layers. The output layer is equipped with the \textit{softplus} function $\textrm{softplus}(x) = \ln(1+\exp(x))$. This way, the a-priori knowlegde that the correct term in the right-hand side is positive at all times for positive initial values $x_0$ can be easily incorporated. 
The total amount of weights and biases of this net is $n_\theta=151$.

In the following subsections we study OED problems based on (\ref{lotka}-\ref{lotka_hybrid}) with different combinations of model parameters $\hat p$ and weights $\theta$ by application of Algorithm~\ref{algo_sequential}.

\subsubsection{Estimating mechanistic model parameters} \label{subsec:oed_model_parameters}

We start by considering OED for the ODE \eqref{lotka} to obtain a basis for comparison. In particular, we shall investigate the complementary impact of control functions $u$ and sampling decisions $w$ on uncertainty quantification.
As in \cite{Sager2013}, we assume the model parameters $\hat p_2$ and $\hat p_4$ to be known and calculate experiments that provide rich information for estimating the model parameters $(\hat p_1, \hat p_3)$.
We consider four different OED problems in step 2. of Algorithm~\ref{algo_sequential}, combining a) equidistant and free sampling $w$ with b) fixed controls $u \equiv 0$ and optimal controls $u \equiv u^*$.
In all cases, 
in step 1. of Algorithm~\ref{algo_sequential} we sample $\hat p_i \sim \mathcal N(1.0, 0.25^2), ~ i \in \lbrace 1 ,3 \rbrace$,
in step 3. we select three measurement times in $\mathcal T$ per differential state, and add normally distributed noise with mean $0$ and variance $\sigma^2 = 0.1^2$. 

The results of the parameter estimation problems are given in Table \ref{tab:default_lotka}. 
\begin{table}[h!!!]
	\centering
	\begin{tabular}{c|cc}
	 	Setting     & $\hat p_1$ & $\hat p_3$ \\ \toprule 
		$w0$-$u0$   & $0.998 \pm 0.057$ & $0.980 \pm 0.082$ \\
    $w^*$-$u0$  & $1.005 \pm 0.037$ & $1.008 \pm 0.034$ \\ 
		$w0$-$u^*$  & $1.013 \pm 0.055$ & $1.003 \pm 0.004$ \\
    $w^*$-$u^*$ & $0.995 \pm 0.017$ & $1.004 \pm 0.006$ \\ \bottomrule
	\end{tabular}
	\caption{Comparison of parameter estimates and their uncertainty for the four considered OED problems that vary in constraints on sampling function $w$ and control function $u$, see (\ref{wua_w}--\ref{wua_u}) for definitions.}\label{tab:default_lotka}
\end{table}

While three equidistant measurements suffice to determine the two parameters with a maximum error of $2\%$, a significant improvement in accuracy and in the reduction of uncertainty can be observed when timing the measurements optimally or stimulating the system with controls $u^*$. The effect is even stronger when both are combined. 
The more accurate and less uncertain parameter estimates directly carry over to more accurate predictions in the differential states and the interaction terms, as shown in Figure~\ref{fig:trajectories_default_lotka}.

\begin{figure}[h!!!]
	\centering
	\begin{subfigure}{.24\linewidth}
		\includegraphics[width=\linewidth]{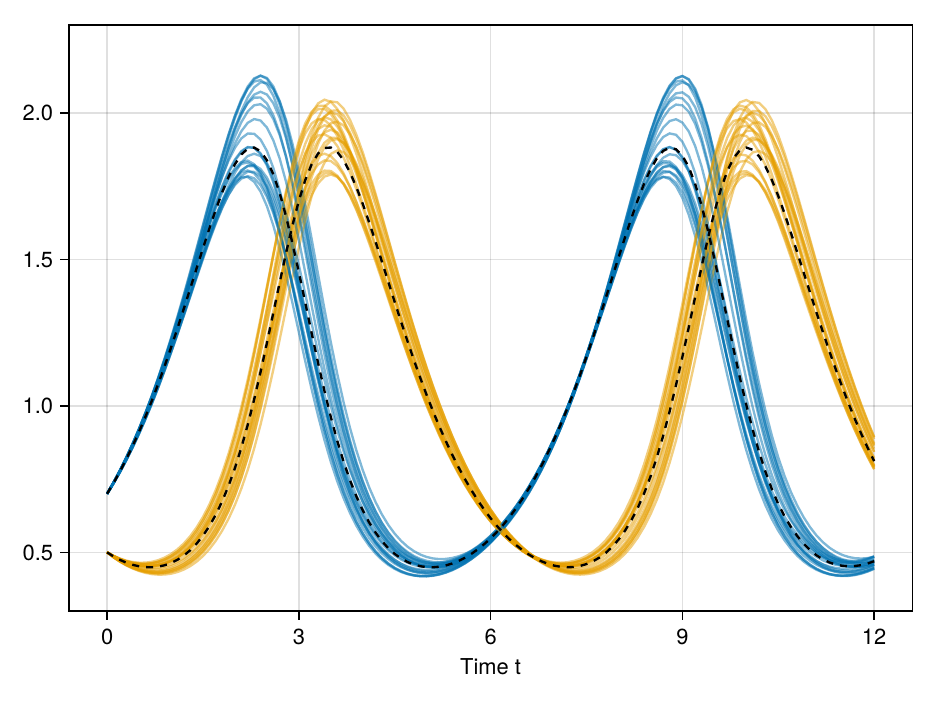}
	\end{subfigure}
	\begin{subfigure}{.24\linewidth}
		\includegraphics[width=\linewidth]{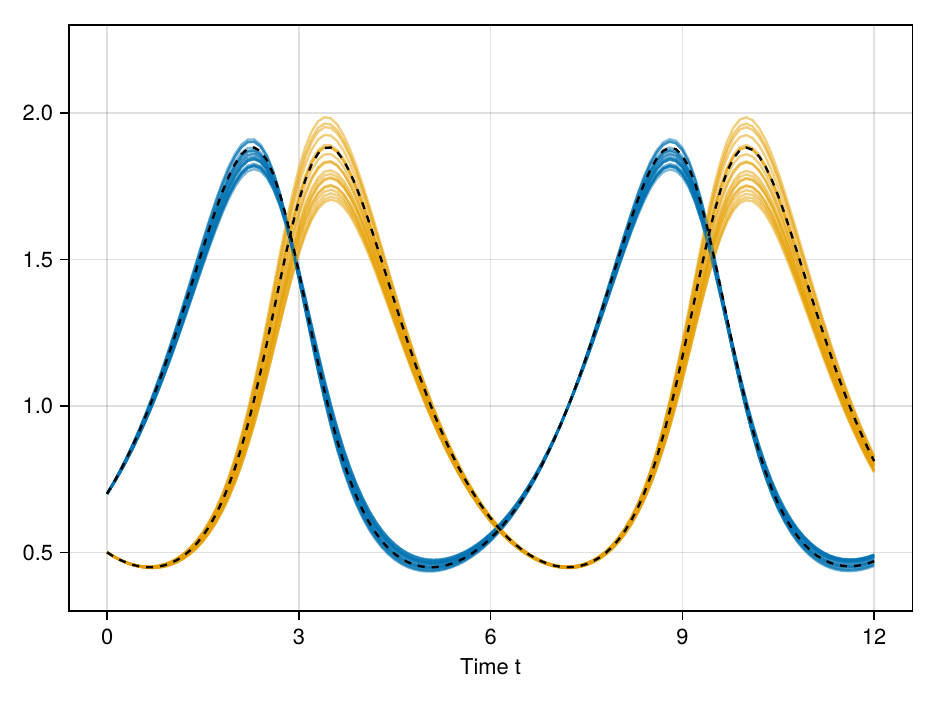}
	\end{subfigure}
	\begin{subfigure}{.24\linewidth}
		\includegraphics[width=\linewidth]{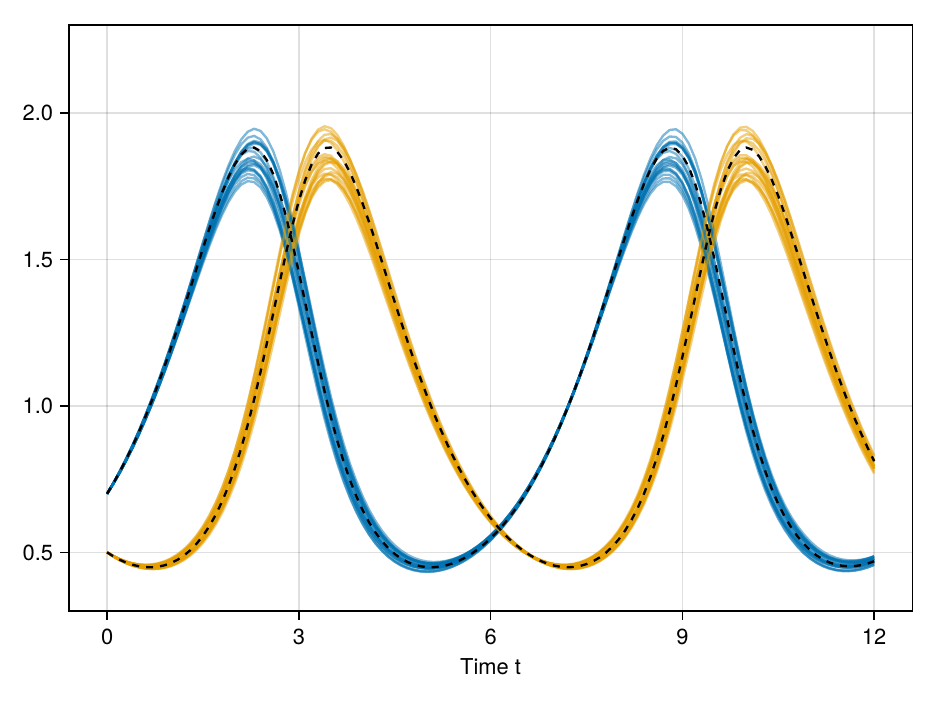}
	\end{subfigure}
	\begin{subfigure}{.24\linewidth}
		\includegraphics[width=\linewidth]{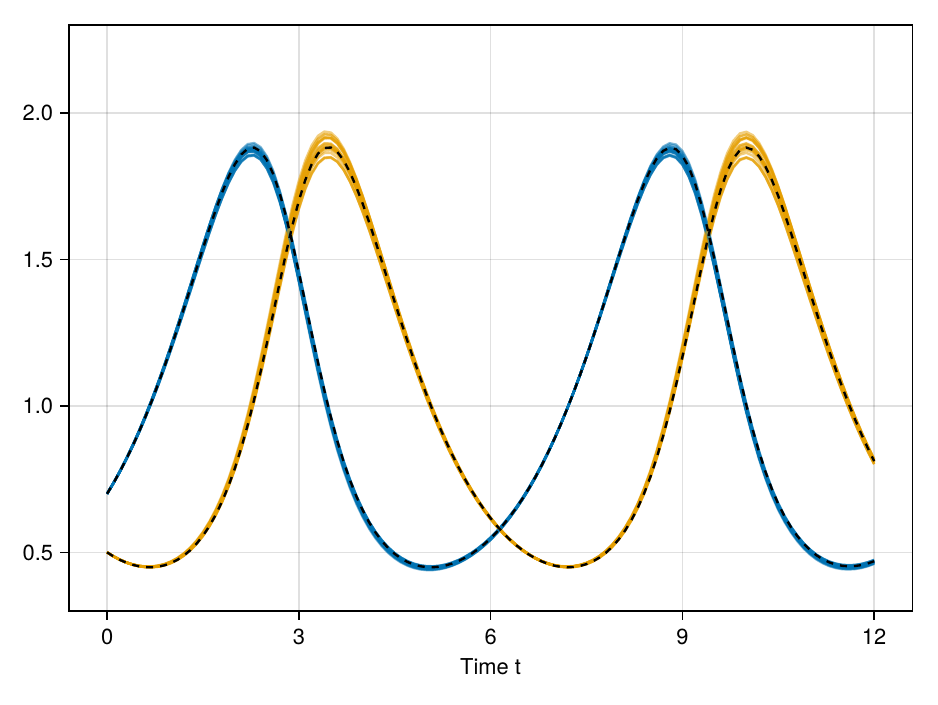}
	\end{subfigure}

	\begin{subfigure}{.24\linewidth}
		\includegraphics[width=\linewidth]{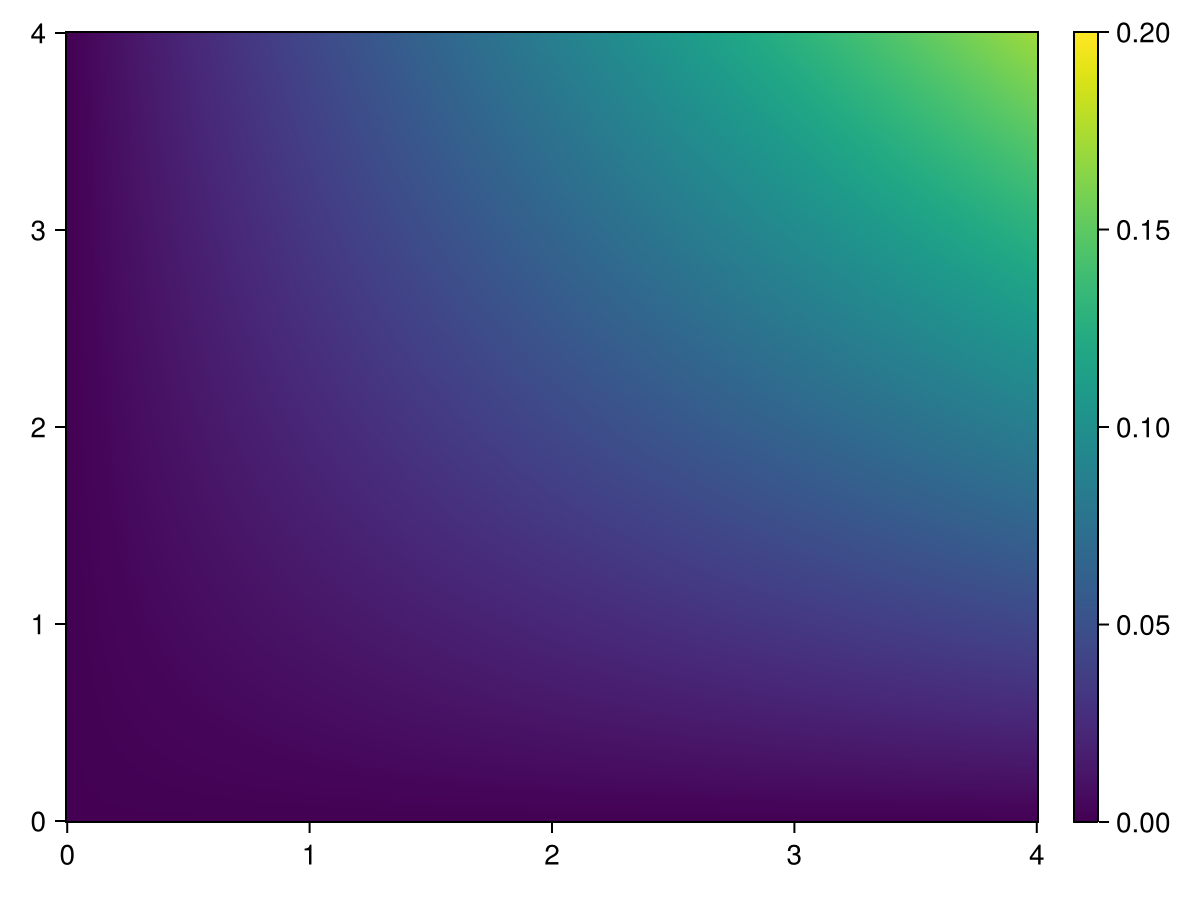}
		\subcaption{Setting $w0-u0$.}
	\end{subfigure}
	\begin{subfigure}{.24\linewidth}
		\includegraphics[width=\linewidth]{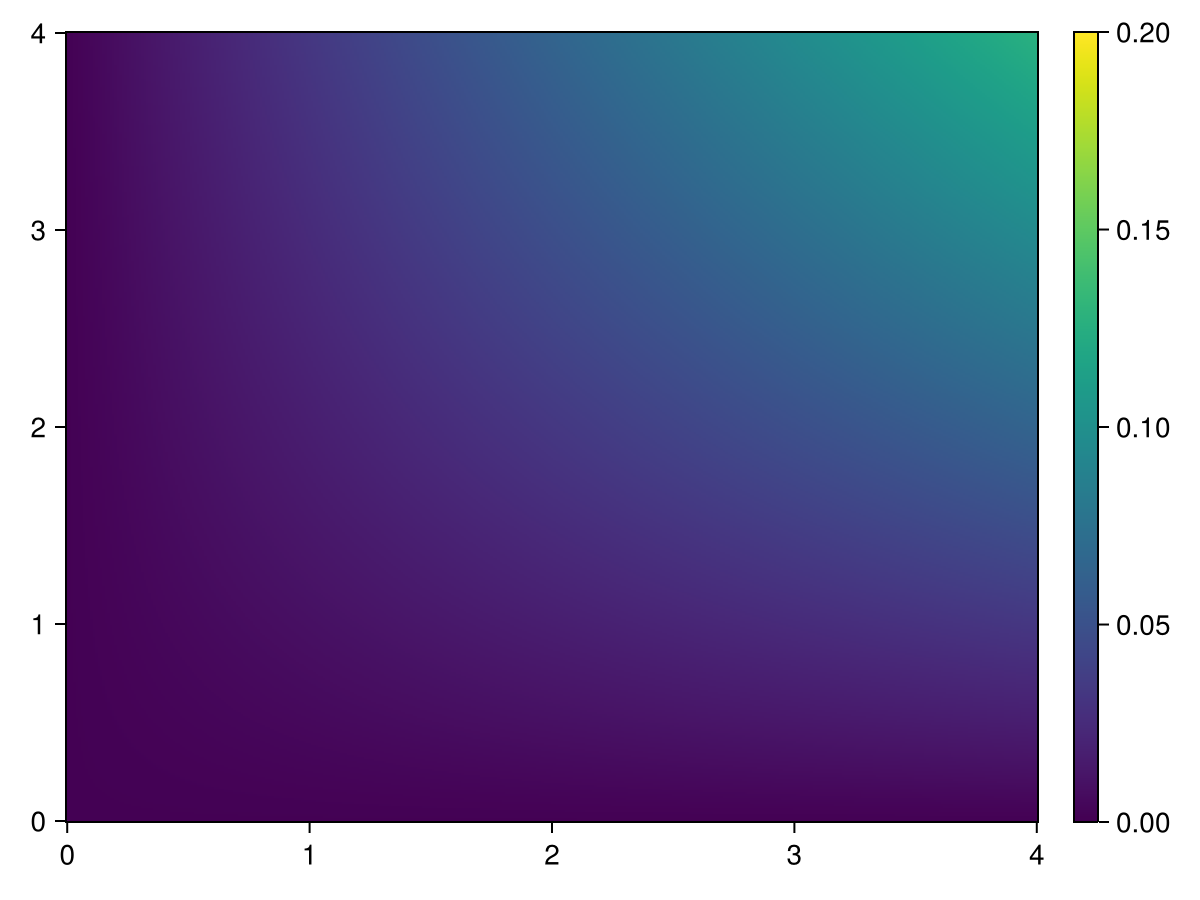}
		\subcaption{Setting $w0-u^*$.}
	\end{subfigure}
	\begin{subfigure}{.24\linewidth}
		\includegraphics[width=\linewidth]{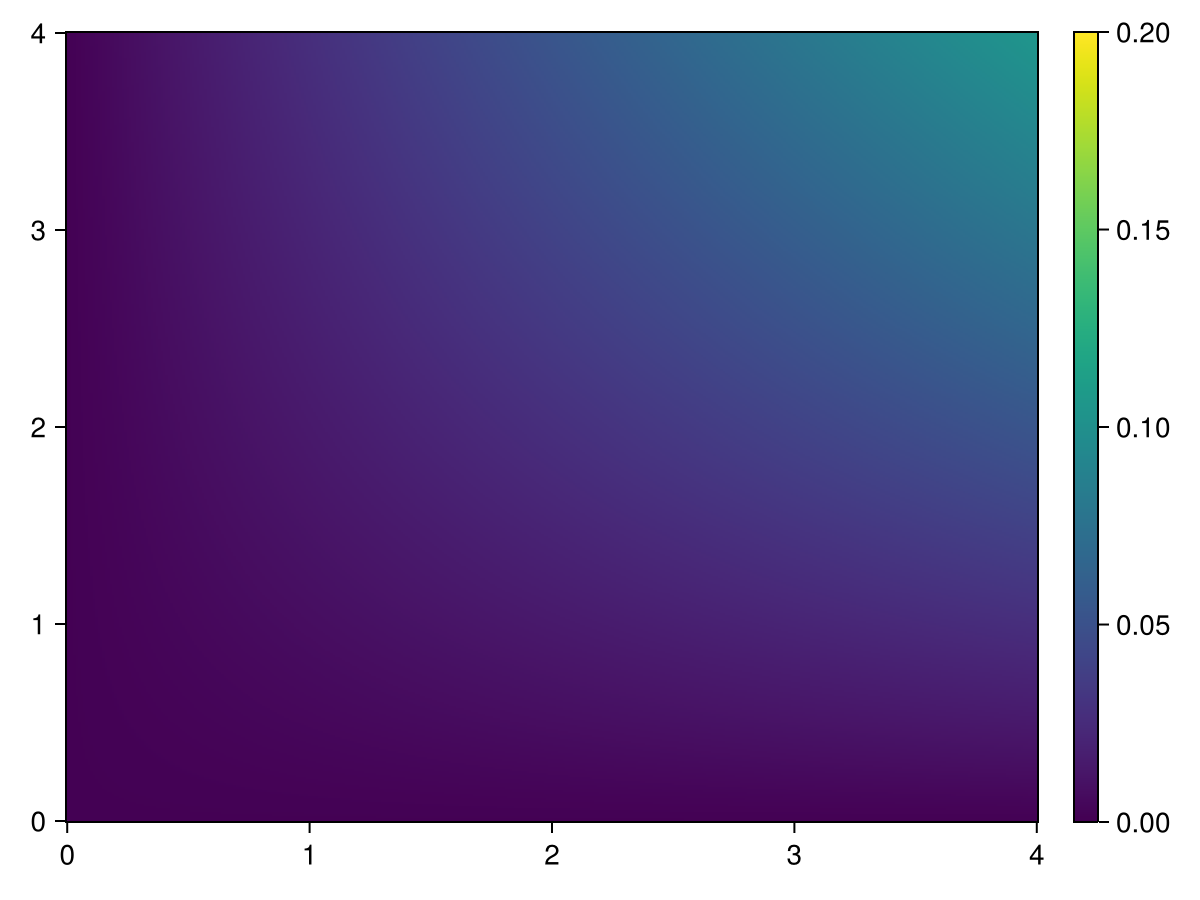}
		\subcaption{Setting $w^*-u0$}
	\end{subfigure}
	\begin{subfigure}{.24\linewidth}
		\includegraphics[width=\linewidth]{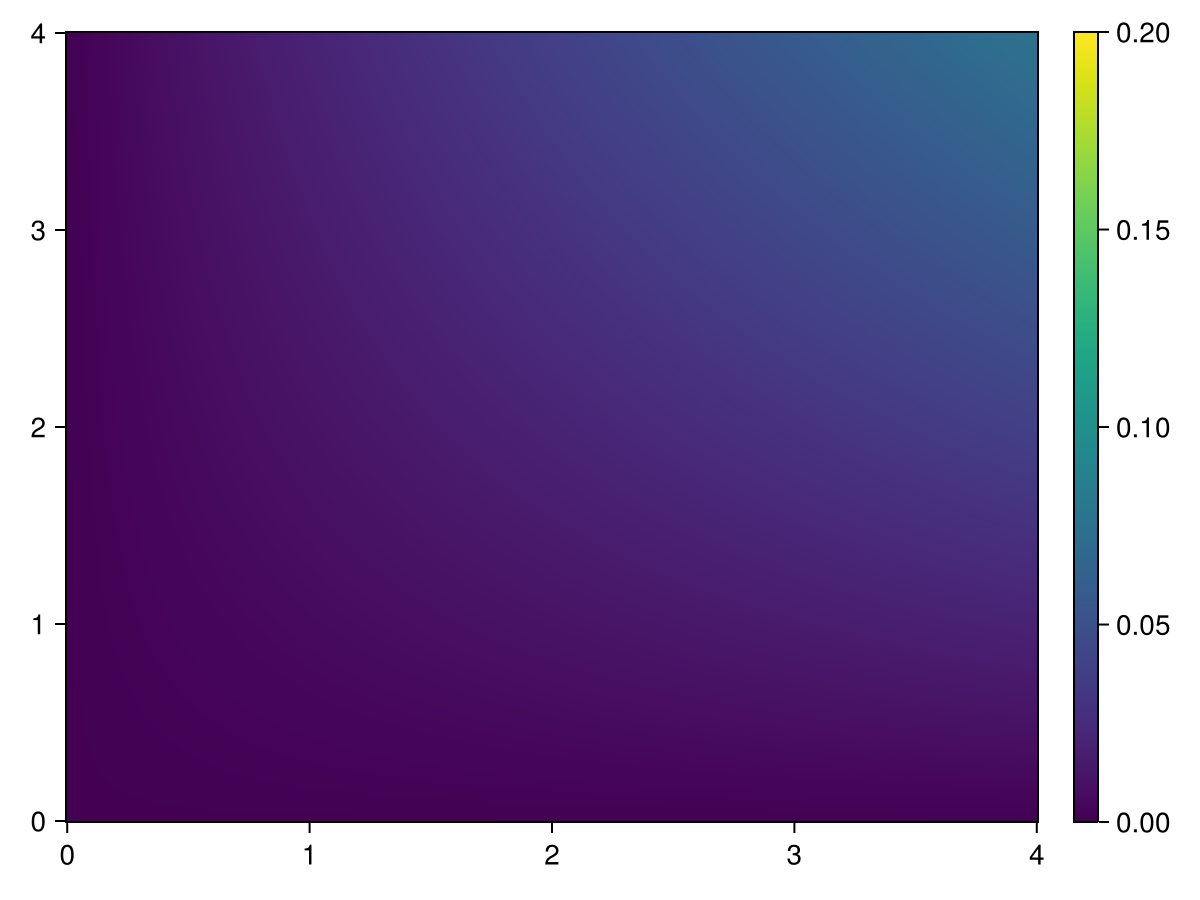}
		\subcaption{Setting $w^*-u^*$.}
	\end{subfigure}
	\caption{Top row: Monte-Carlo simulations of \eqref{lotka} using the parameter estimates and uncertainties from Table \ref{tab:default_lotka}. Bottom row: absolute errors of interaction term $\lvert \hat p_i \cdot x_1 x_2 - x_1 x_2\rvert$ averaged for $i=1,3$ on $\mathcal X = [0,4]^2$. Both rows (from left to right) show that solving \eqref{OED} reduces the uncertainty significantly.
	}
	\label{fig:trajectories_default_lotka}
\end{figure}

Figure~\ref{fig_gig_mechanistic} illustrates Lemma~\ref{TCovarianceTrace} and the concept of global information gain.
\begin{figure}[h!!!]
	\centering
 	\includegraphics[width=0.95\linewidth]{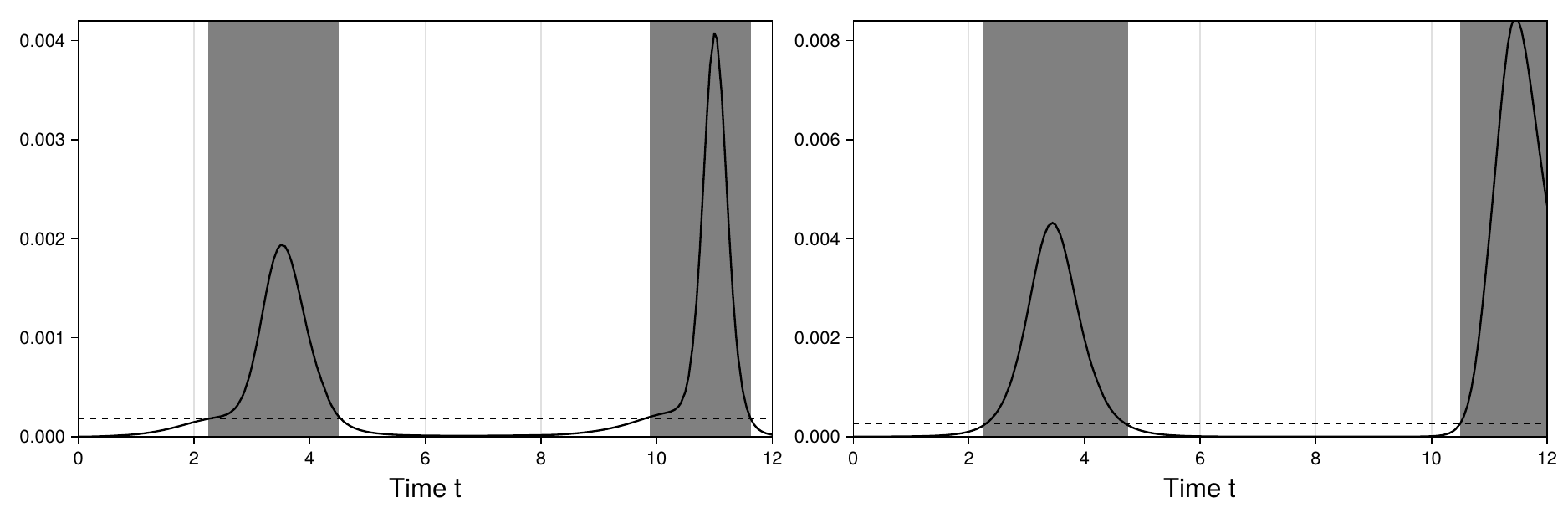}
	\caption{Visualization of global information gain for optimized $\phi_A$. The Lagrange multipliers $\mu^*$ of the constraints $z_i(\tf) \le M_1$ and the functions $\frac{1}{n_p} \trace \left( \Pi^i(t) \right)$ are plotted (dotted and solid, respectively) for the two measurement functions $h^i(x(t)) = x_i(t)$ together with optimal sampling functions $w_i^*(\cdot)$ for $i=1$ (left) and $i=2$ (right). As stated in Lemma~\ref{TCovarianceTrace}, if $w_i^*(t) = 1$ (gray background), then $\frac{1}{n_p} \trace \left( \Pi^i(t) \right) \ge \mu_i^*$.
}
	\label{fig_gig_mechanistic}
\end{figure}

\subsubsection{Training of ANN weights} 

Now we consider the UDE \eqref{lotka_hybrid}
for OED problems that differ in constraints on $(u,w)$ in step 2. of Algorithm~\ref{algo_sequential} as in Section~\ref{subsec:oed_model_parameters}, but also in the chosen approach $a$ in (\ref{wua_w}--\ref{wua_a}) for various numbers $n_s$ of singular values to approximate the FIM of the overparameterized ANN training problem.
%
We use a procedure similar to the one in Section~\ref{subsec:oed_model_parameters}, but increase the maximum number of measurements per differential state in step 3. to 10.
A prototypical solution of problem \eqref{OED} is shown in Figure~\ref{fig:solution_OED_SVD} for $w^*$-$u^*$-$\svdu{2}$.
\begin{figure}[h!!!]
	\centering
	\includegraphics[width=.8\linewidth]{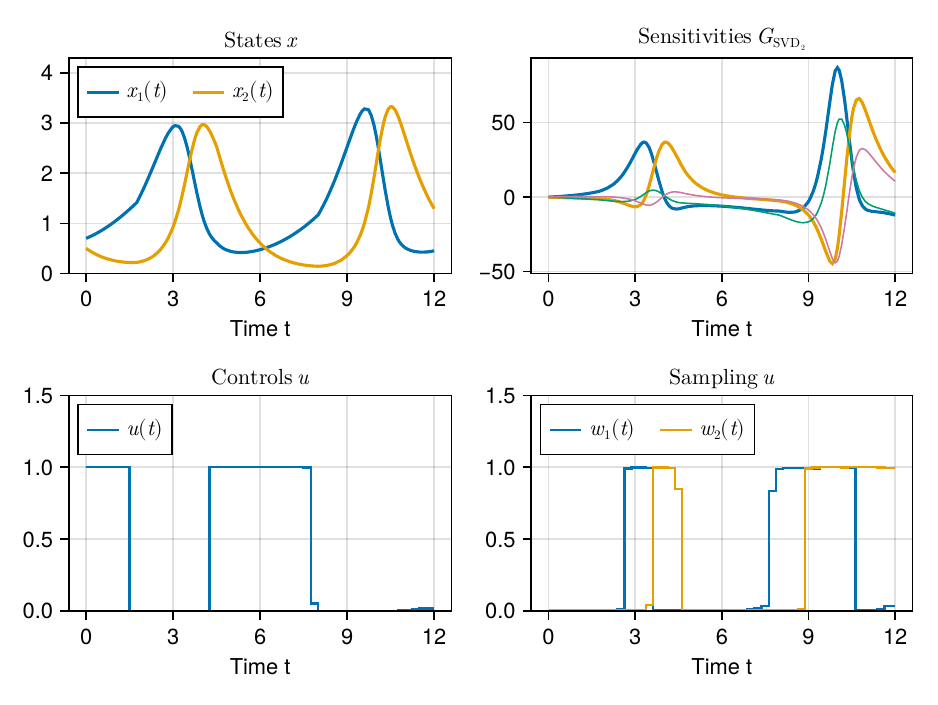}
	\caption{Solution of \eqref{OED} for \eqref{lotka_hybrid} with the embedded ANN using the $w^*$-$u^*$-$\svdu{2}$ approach.}
	\label{fig:solution_OED_SVD}
\end{figure}
As expected, the resulting optimal controls and states are qualitatively similar to those for the mechanistic ODE considered in Section~\ref{subsec:oed_model_parameters} and in \cite{Sager2013}. 
The optimal sampling $w^*$ consists of two time intervals each for both measurement functions, similar but not identical to the gray regions in Figure~\ref{fig_gig_mechanistic}.
The optimal control $u^*$ excites the differential states towards higher oscillations which go along with increased oscillations in the sensitivities. This way, more information can be collected in regions of the state-space where the model is highly sensitive towards the weights and biases $\theta$.

Looking in more detail into differences of the various approaches $a$, we can observe a decrease in uncertainty measured in the objective function $\phi_\svdu{2}$ if OED is applied, see Table~\ref{tab:criteria_with_controls}. The deviation of weights $\theta^*$ from the initial weights $\bar \theta$ is not a reliable indicator, though, because of the overparameterized ANN, and only shown for completeness.
\begin{table}[h!!!]
	\centering
	\begin{tabular}{r|cc || r|cc}
	 	Approach        & $\phi_\svdu{n_s}(w,u)$ & $\lvert \Delta\theta \rvert_2$  & Approach & $\phi_\svdu{n_s}(w,u)$ & $\lvert \Delta\theta \rvert_2$ \\ \toprule 
   $w0$-$u0$-$l$           & 5.25e-3 & 0.25 & $w0$-$u^*$-$l$         & 7.12e-4 & 1.91 \\ 
   $w^*$-$u0$-$o$          & 2.69e-3 & 0.83	& $w^*$-$u^*$-$o$         & 1.62e-4 & 1.31 \\
   $w^*$-$u0$-$l$          & 2.34e-3 & 0.73	& $w^*$-$u^*$-$l$         & 1.59e-4 & 2.31 \\
   $w^*$-$u0$-$ll$         & 2.40e-3 & 0.32	& $w^*$-$u^*$-$ll$        & 1.63e-4 & 1.85 \\
   $w^*$-$u0$-$\svdu{2}$   & 2.33-3  & 0.33	& $w^*$-$u^*$-$\svdu{2}$  & 1.53e-4 & 2.51 \\ 
   $w^*$-$u0$-$\tsvdu{2}$  & 1.69e-3 & 0.20	& $w^*$-$u^*$-$\tsvdu{2}$	& 1.61e-4 & 1.79 \\ \bottomrule
	\end{tabular}
	\caption{D-Criterion $\phi_D$ of the SVD-based OED approach and averaged deviation of trained weights $\theta^*$ from the initial weights $\bar \theta$. Compared are the OED problems corresponding to fixed and free controls $u$, and for different dimension reduction approaches, respectively.}
	\label{tab:criteria_with_controls}
\end{table}
There are two trends observeable in the table. First, taking measurements at optimized time points compared to equidistant measurements yields a noticeable reduction in the objective, regardless whether controls are optimized along with the sampling decision or not. Second, applying optimized controls $u^*$ reduces the objective for almost all strategies by approximately one order of magnitude.

Another evaluation criterium is the comparison of absolute errors of ground truth model and ANN output $\myU(x,\theta^*)$ over a domain of interest $\mathcal X$.
Figure~\ref{fig:absolute_errors} shows the increase in accuracy due to OED. One observes advantages for the SVD approach, and in general for all OED problems with additional degrees of freedom.
Of particular interest is the correspondence between OED solutions with optimal $u^*$ resulting in a larger training domain (see also Figure~\ref{fig:solution_OED_SVD} for the increased oscillations of $x$), and the increased accuracy for the extrapolated domain $\mathcal X = [0,4]^2$ (as compared to the originally considered state-space domain $\mathcal X = [0,2]^2$).

Finally, we have a look at the global information gain function. Figure~\ref{fig_gig} shows how the functions differ for selected reduced OED approaches $a$, resulting in different sampling intervals in which $w_1^*(t) = 1$.
\begin{figure}[h!!!]
	\centering
  \includegraphics[width=0.48\linewidth]{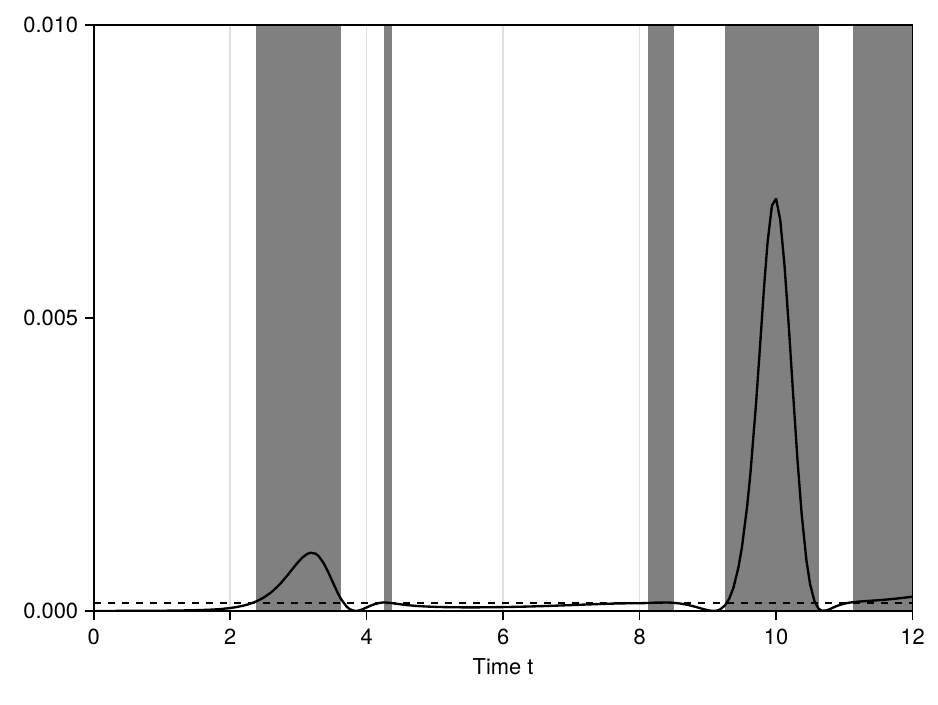}
  \includegraphics[width=0.48\linewidth]{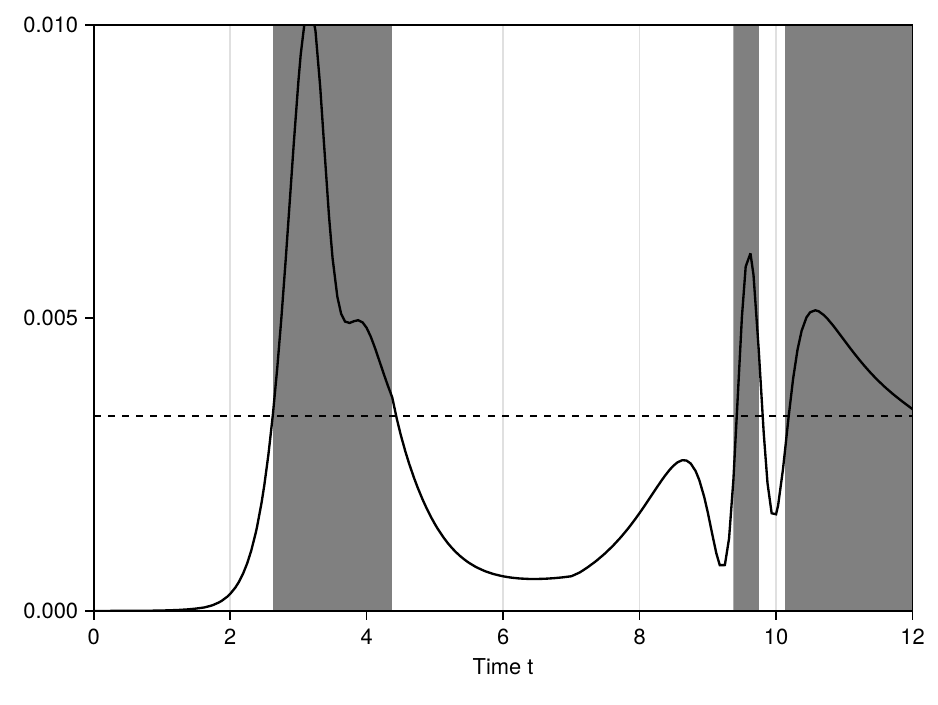}

  \includegraphics[width=0.48\linewidth]{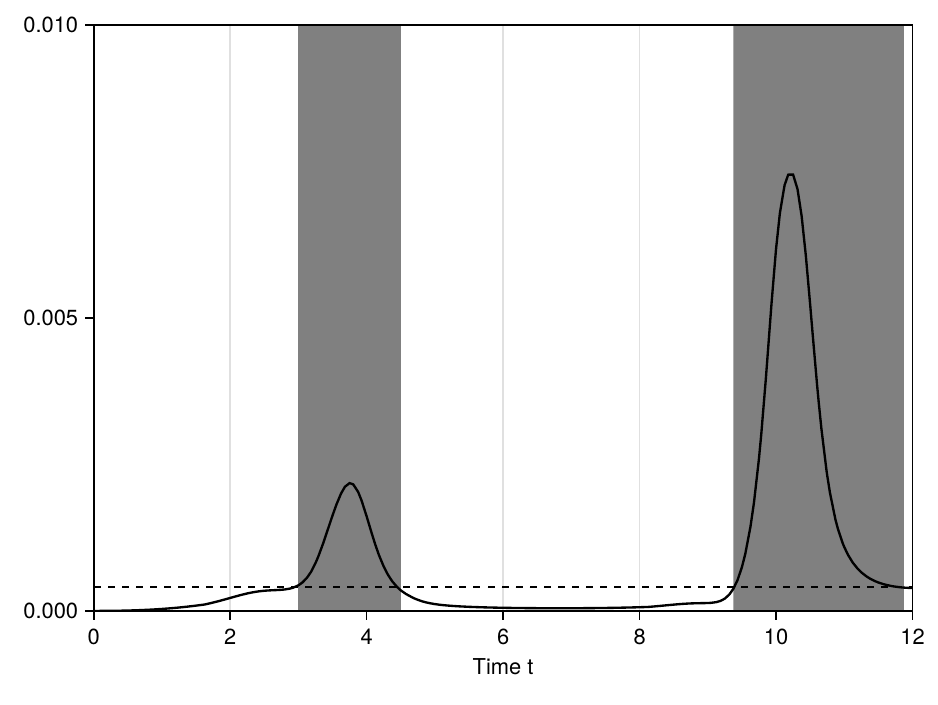}
  \includegraphics[width=0.48\linewidth]{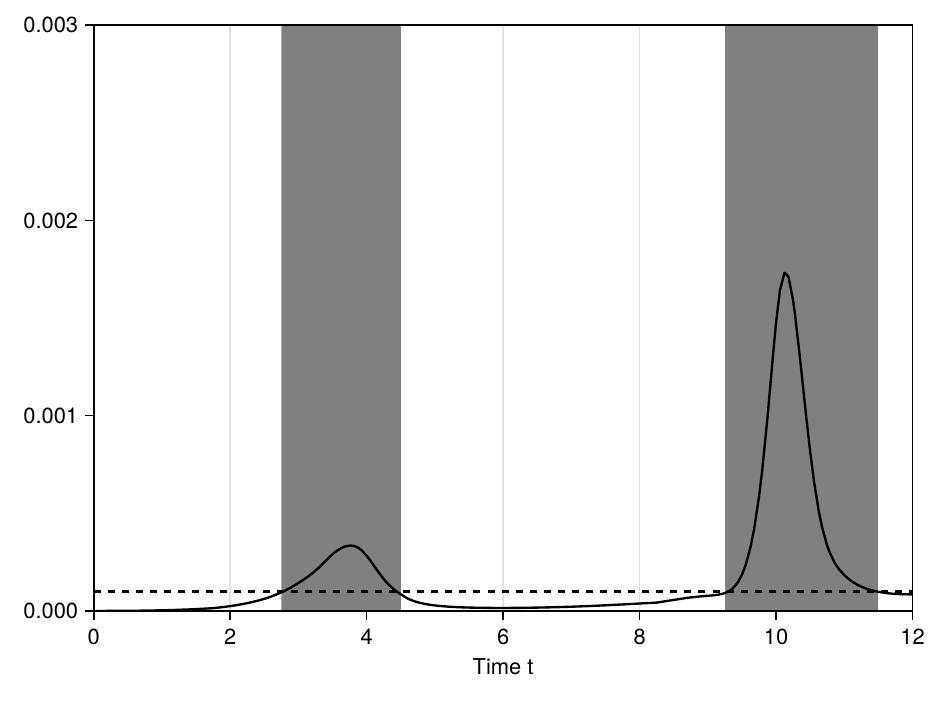}

	\caption{Visualization of Lemma~\ref{TCovarianceTrace} for $w^*$-$u^*$-$l$, $w^*$-$u^*$-$ll$, and $w^*$-$u^*$-$\svdu{7}$ and the A-criterion, and for Lemma~\ref{TCovarianceDet} and $w^*$-$u^*$-$\svdu{7}$ and the D-criterion, from top left to bottom right. Analogous to Figure~\ref{fig_gig_mechanistic} left, the functions $\frac{1}{n_p} \trace \left( \Pi^1(t) \right)$ and $(\det(F^{-1}(\tf) ))^{\frac{1}{n_p}} \; \sum_{i,j = 1}^{n_p} (F(\tf))_{ij} (\Pi^1(t))_{ij}$ (bottom right) in solid, the multipliers $\mu^*_1$ in dotted, and the sampling intervals in gray are shown for the first measurement function $h^1(x(t))=x_1(t)$. While the optimal samplings $w^*$ for $w^*$-$u^*$-$l$ and $w^*$-$u^*$-$ll$ (top row) differ from the optimal sampling in Figure~\ref{fig_gig_mechanistic} left, the $\svdu{7}$ samplings are similar, both for the A- and the D-criterion (bottom row).}
	\label{fig_gig}
\end{figure}

\begin{figure}[t]
	\centering
    \begin{subfigure}{.3\linewidth}
		\includegraphics[width=\linewidth]{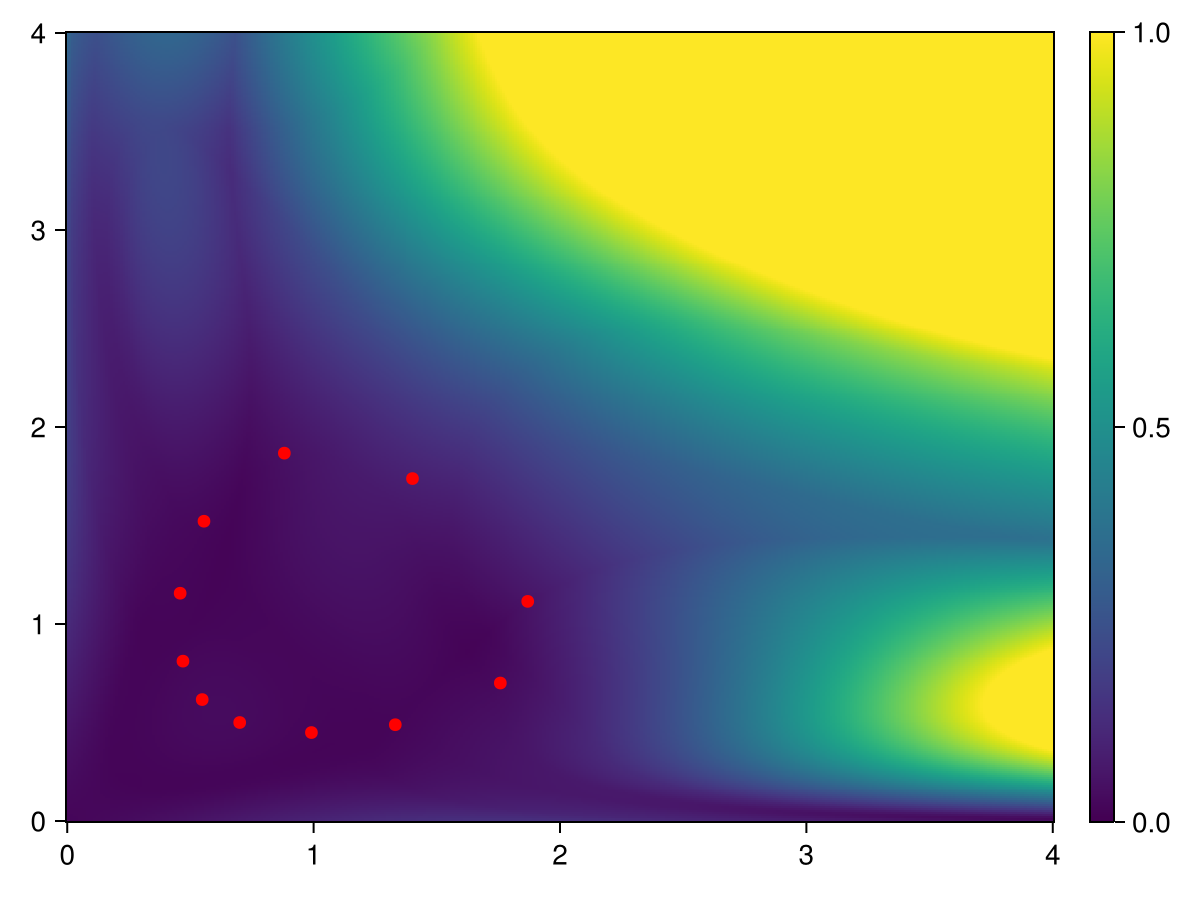}
		\subcaption{$w0$-$u0$-$l$, $\delta = 0.52$}
	\end{subfigure}
	\begin{subfigure}{.3\linewidth}
		\includegraphics[width=\linewidth]{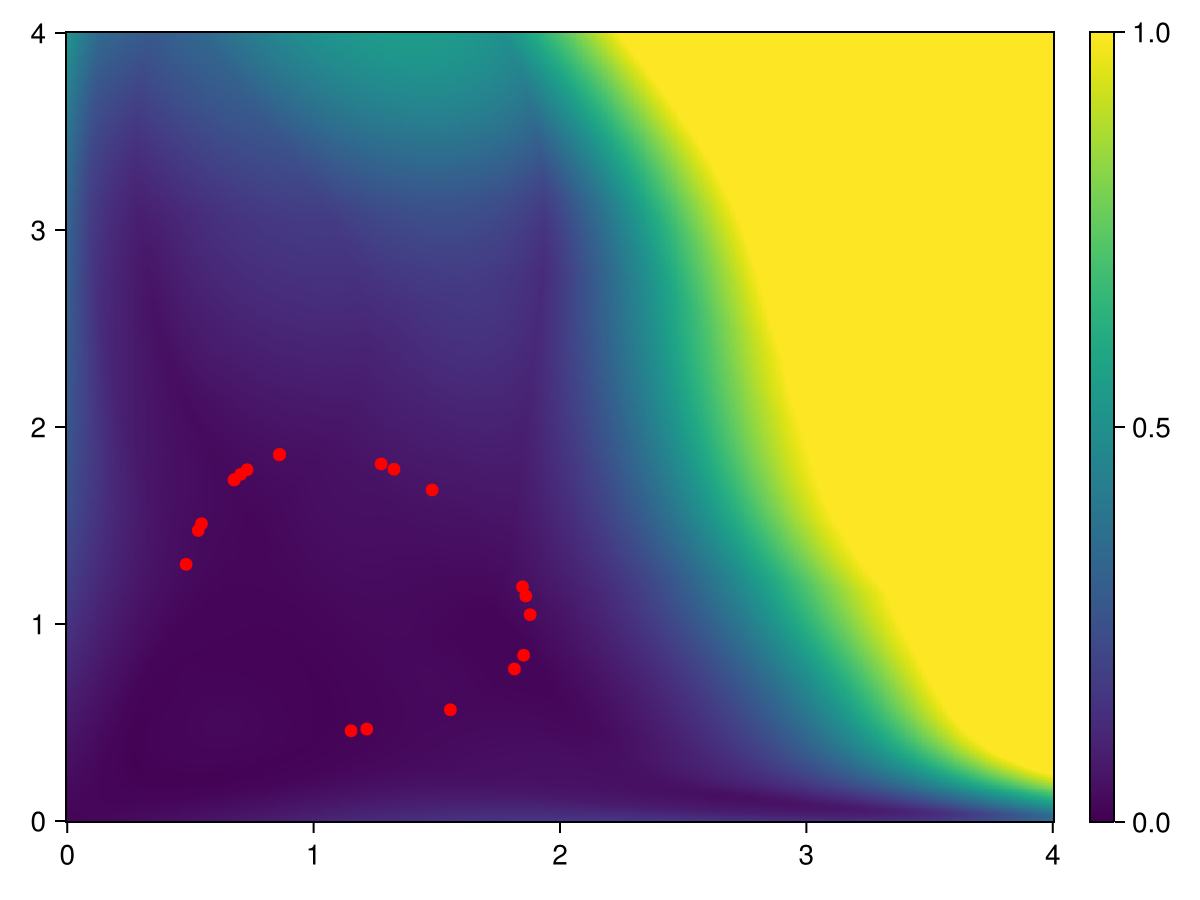}
		\subcaption{$w^*$-$u0$-$o$, $\delta = 0.52$}
	\end{subfigure}
	\begin{subfigure}{.3\linewidth}
		\includegraphics[width=\linewidth]{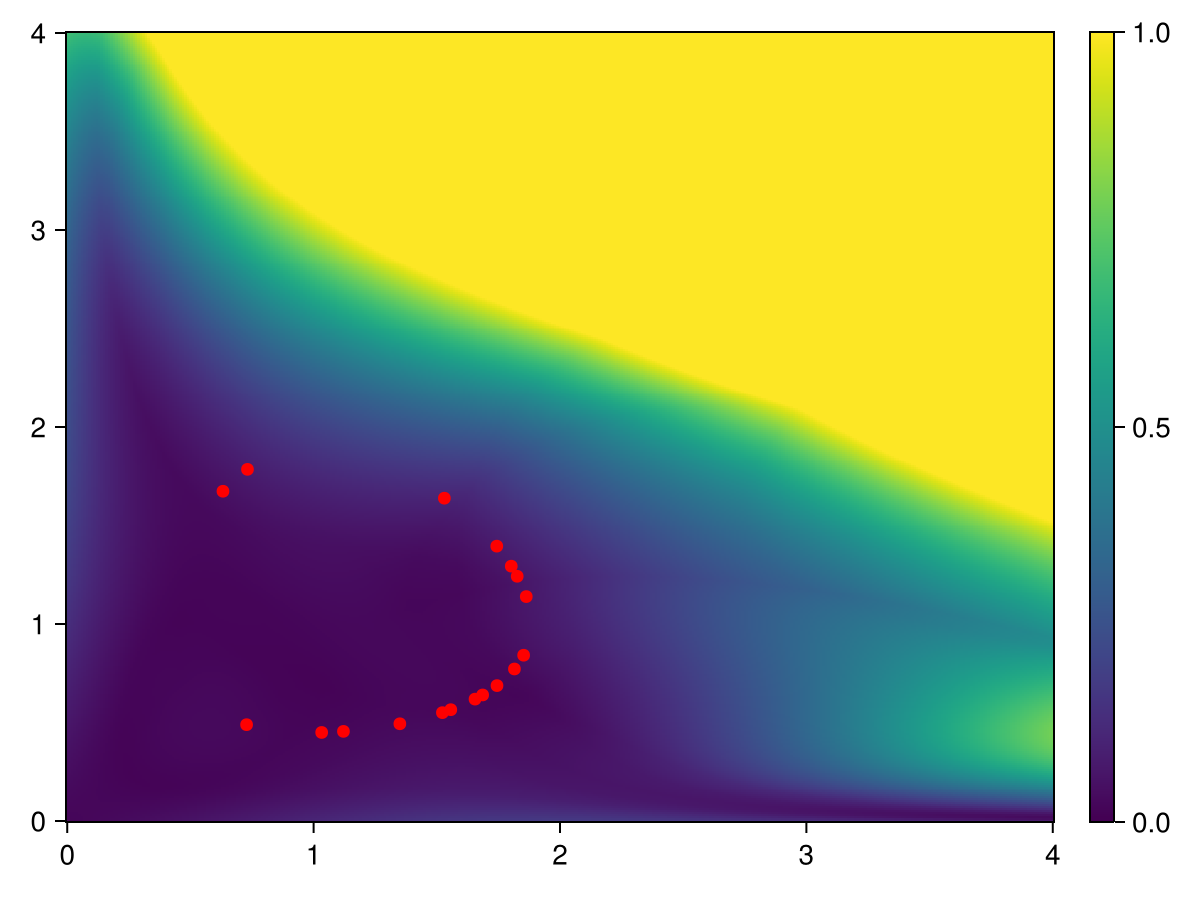}
		\subcaption{$w^*$-$u0$-$l$, $\delta = 1.11$}
	\end{subfigure}

	\begin{subfigure}{.3\linewidth}
		\includegraphics[width=\linewidth]{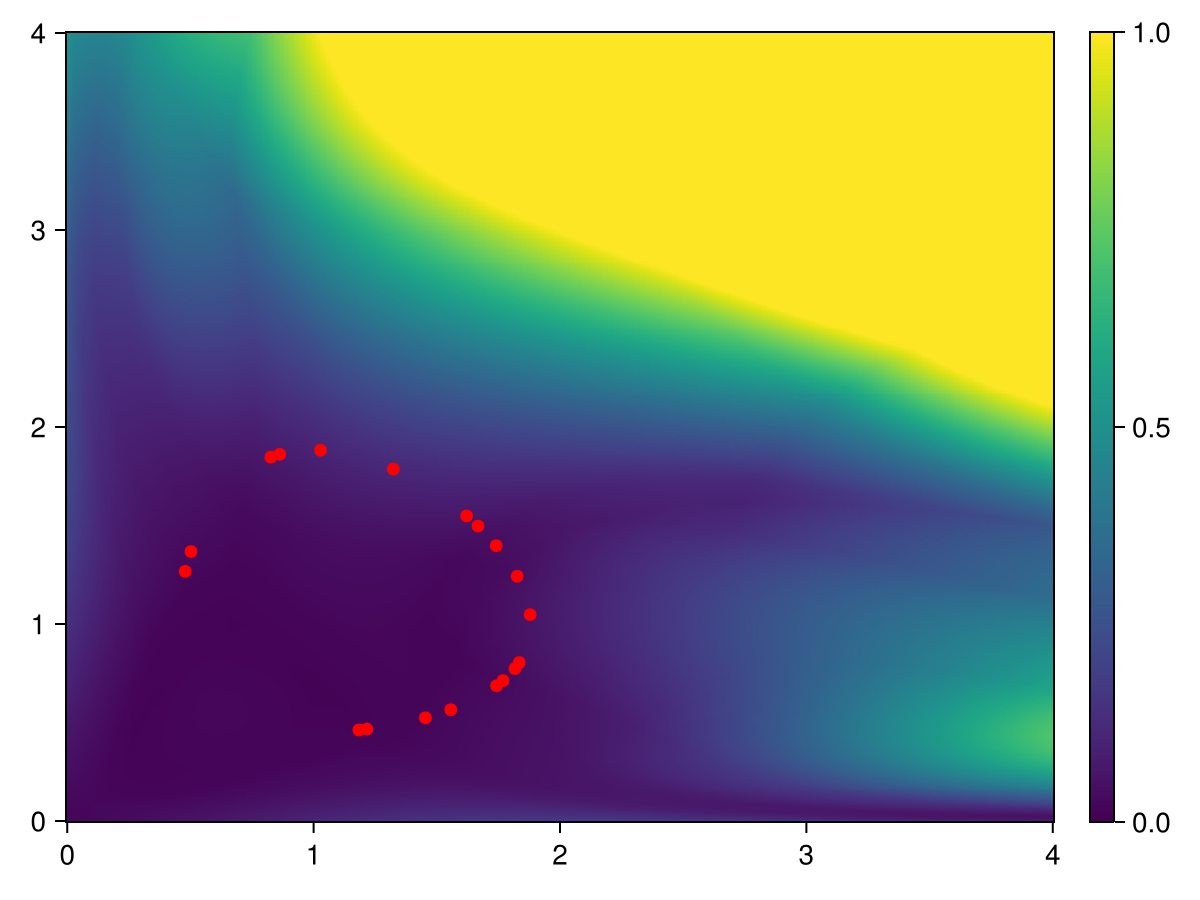}
		\subcaption{$w^*$-$u0$-$ll$, $\delta = 0.64$}
	\end{subfigure}
	\begin{subfigure}{.3\linewidth}
		\includegraphics[width=\linewidth]{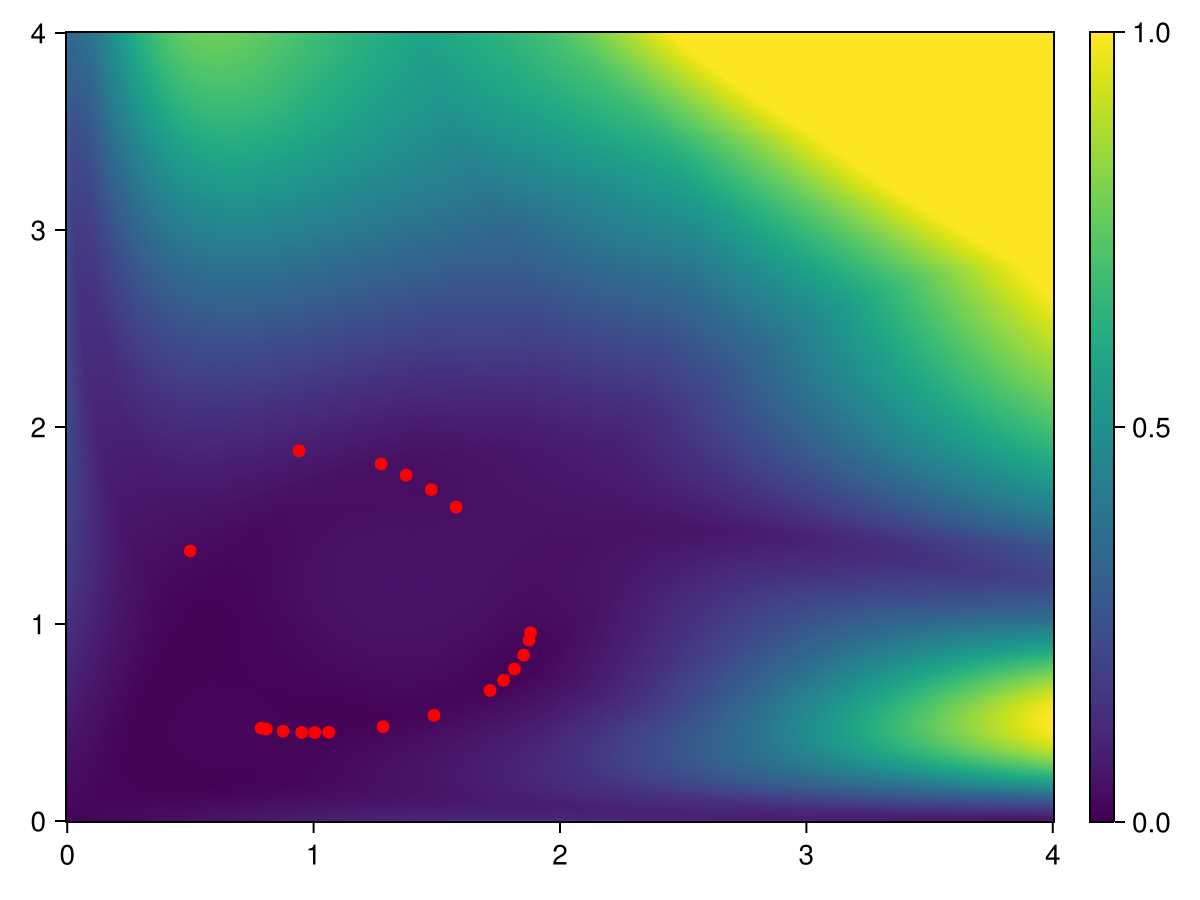}
		\subcaption{$w^*$-$u0$-$\svdu{2}$, $\delta = 0.37$}
	\end{subfigure}
	\begin{subfigure}{.3\linewidth}
		\includegraphics[width=\linewidth]{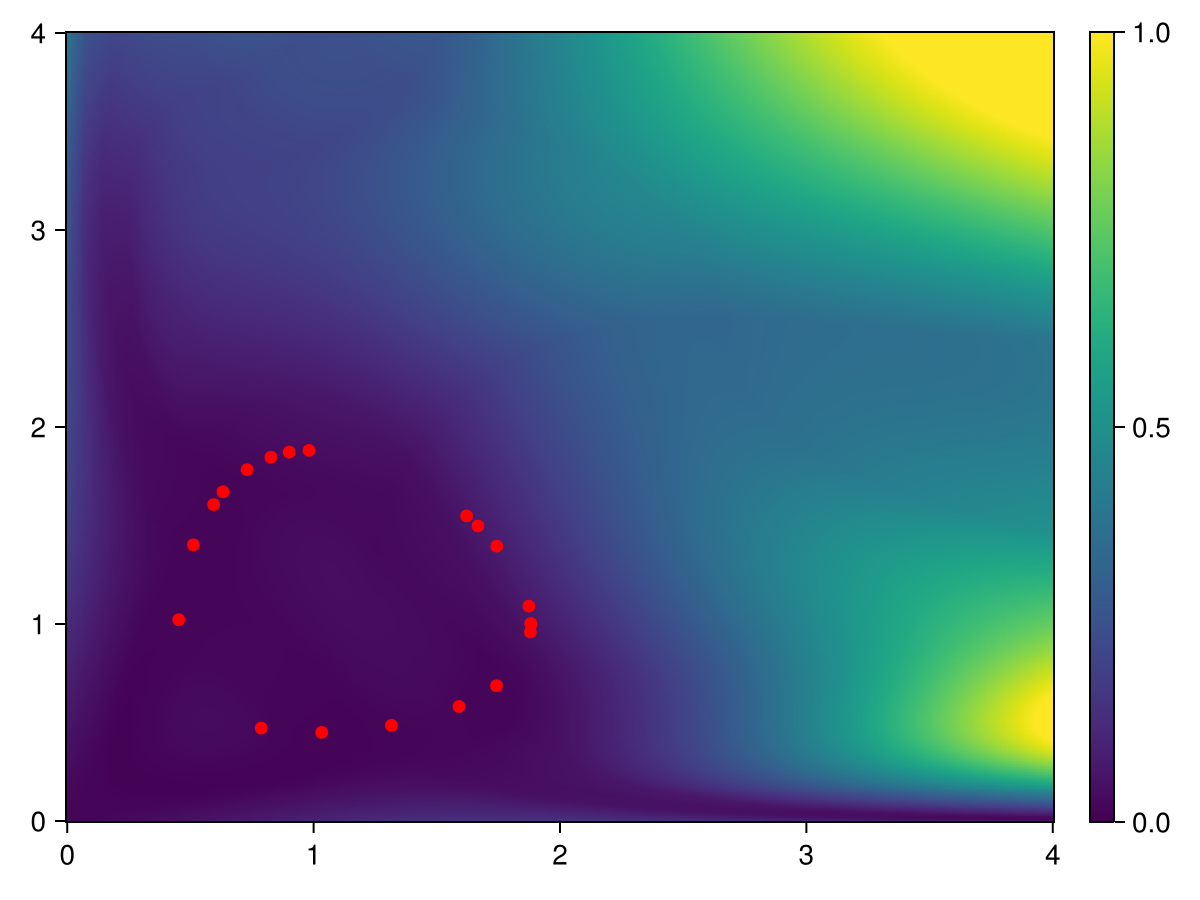}
		\subcaption{$w^*$-$u0$-$\tsvdu{2}$, $\delta = 0.30$}
	\end{subfigure}

	\begin{subfigure}{.3\linewidth}
		\includegraphics[width=\linewidth]{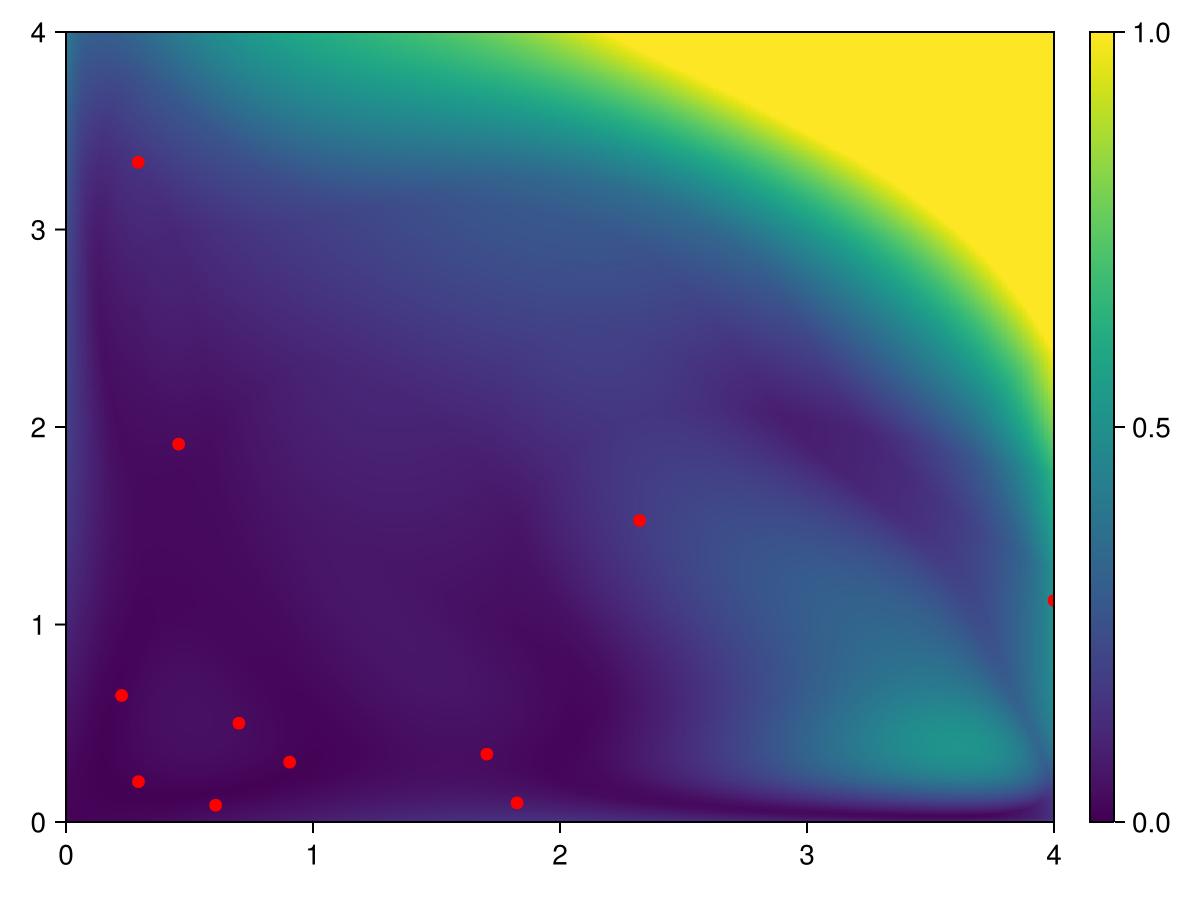}
		\subcaption{$w0$-$u^*$-$l$, $\delta = 0.32$}
	\end{subfigure}
	\begin{subfigure}{.3\linewidth}
		\includegraphics[width=\linewidth]{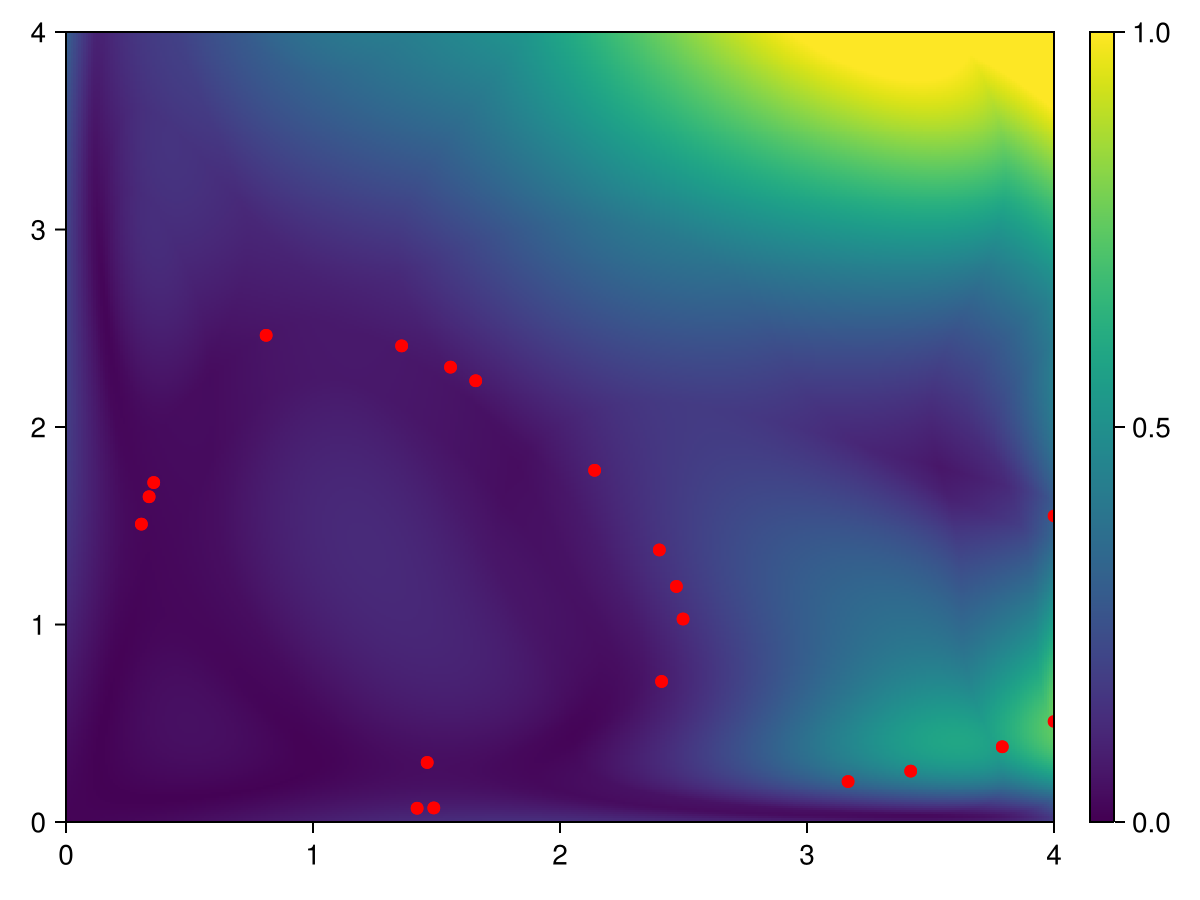}
		\subcaption{$w^*$-$u^*$-$o$, $\delta = 0.24$}
	\end{subfigure}
	\begin{subfigure}{.3\linewidth}
		\includegraphics[width=\linewidth]{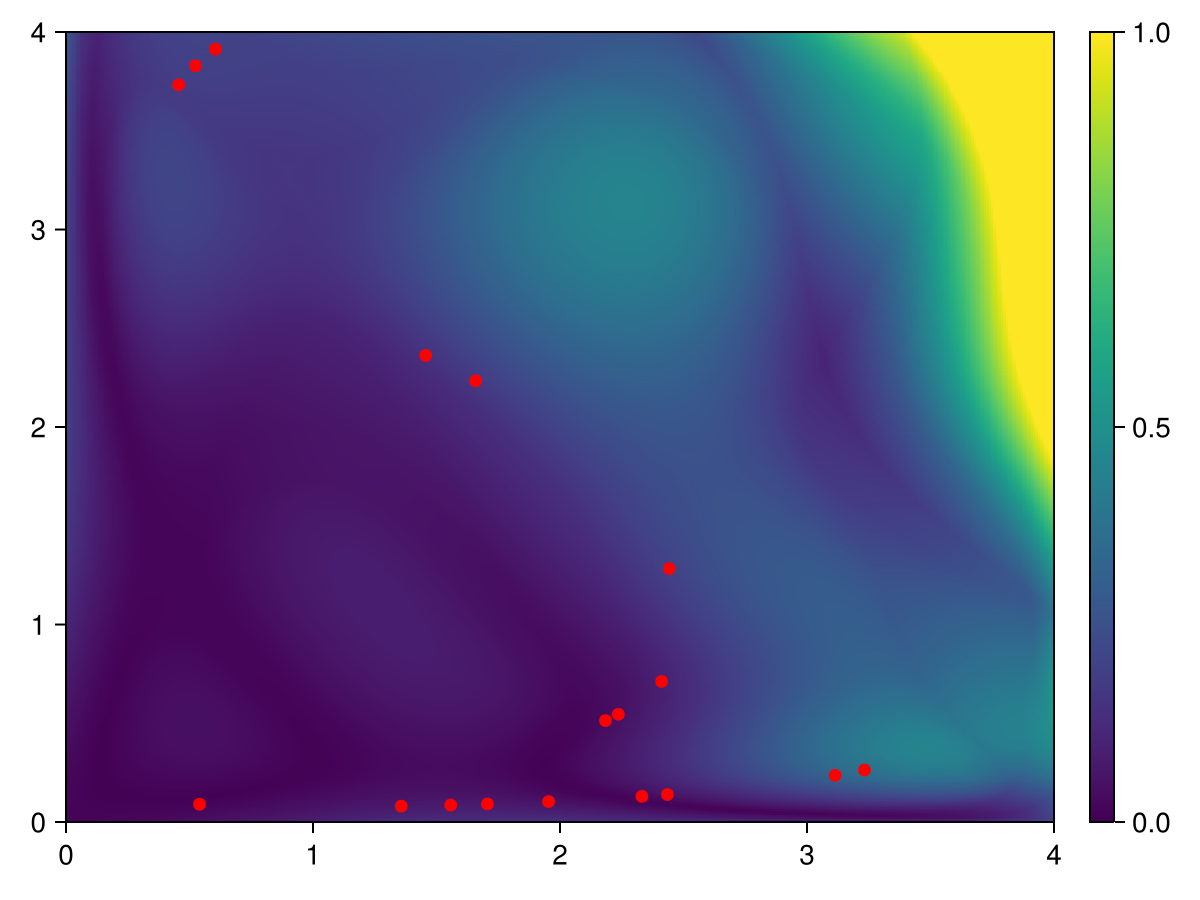}
		\subcaption{$w^*$-$u^*$-$l$, $\delta = 0.24$}
	\end{subfigure}
	
	\begin{subfigure}{.3\linewidth}
		\includegraphics[width=\linewidth]{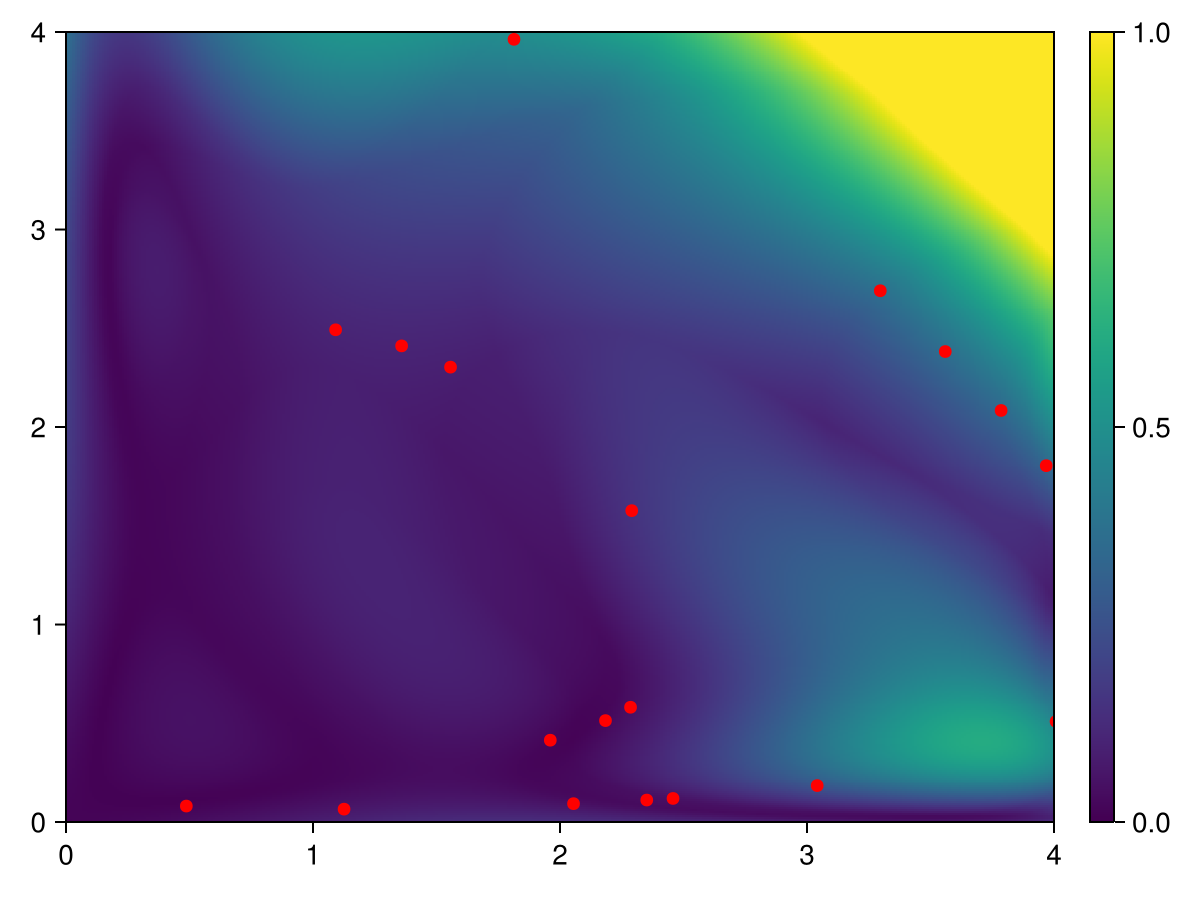}
		\subcaption{$w^*$-$u^*$-$ll$, $\delta = 0.25$}
	\end{subfigure}
	\begin{subfigure}{.3\linewidth}
		\includegraphics[width=\linewidth]{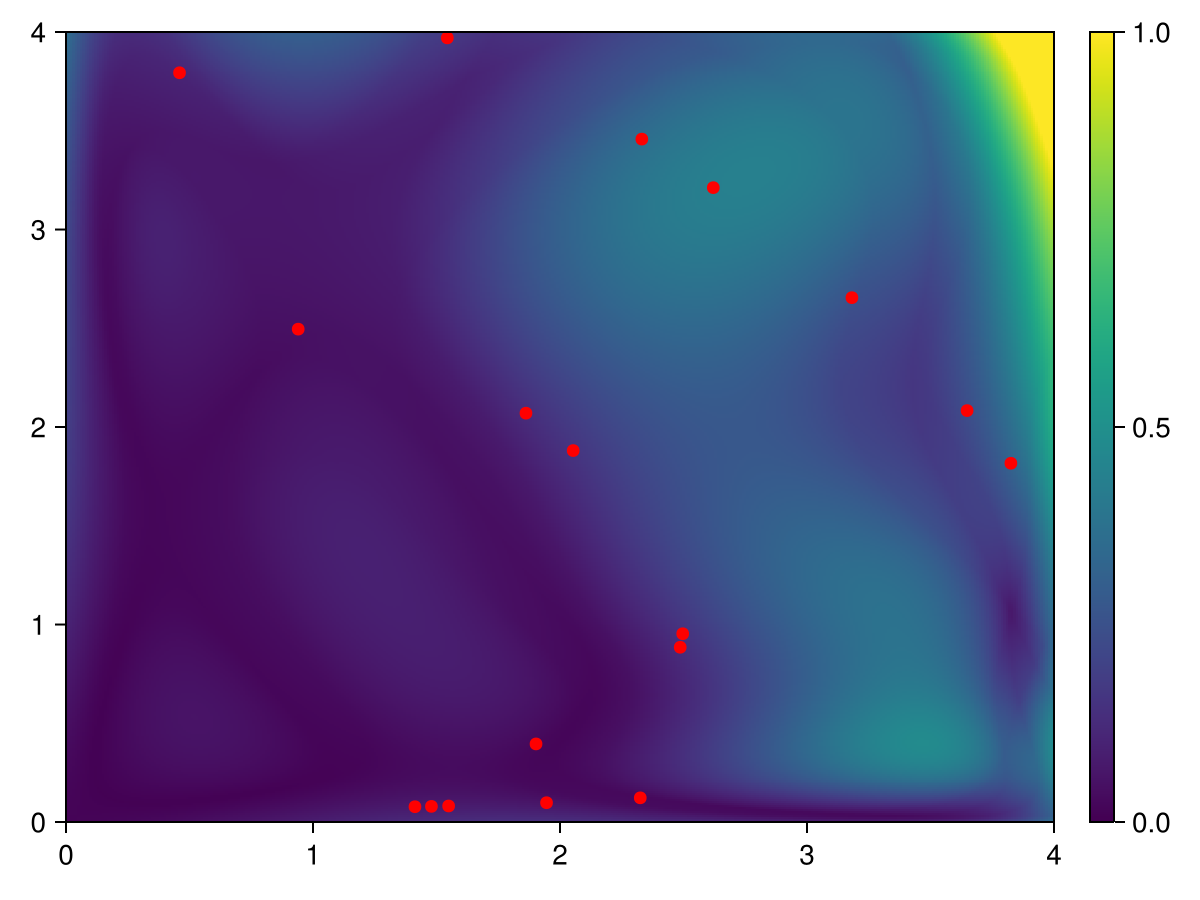}
		\subcaption{$w^*$-$u^*$-$\svdu{2}$, $\delta = 0.18$}
	\end{subfigure}
	\begin{subfigure}{.3\linewidth}
		\includegraphics[width=\linewidth]{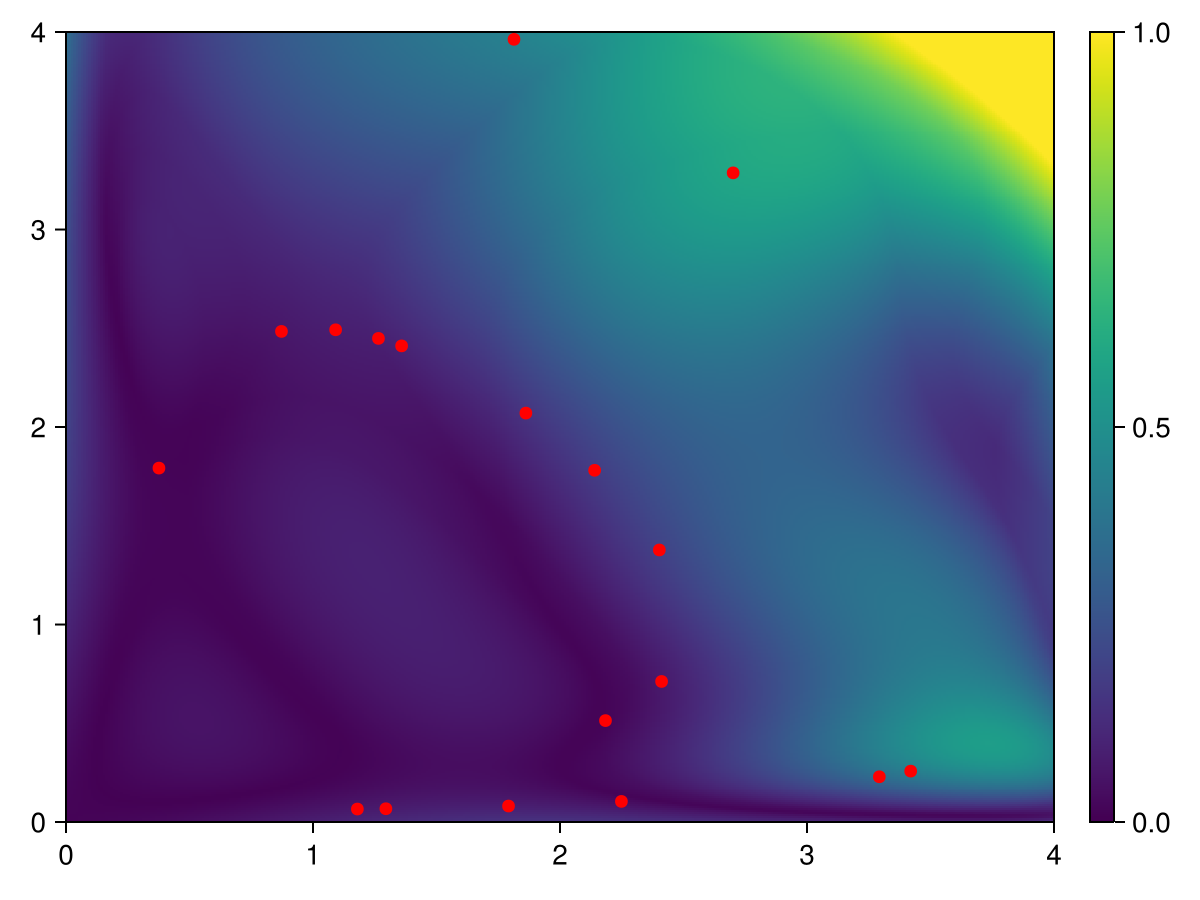}
		\subcaption{$w^*$-$u^*$-$\tsvdu{2}$, $\delta = 0.24$}
	\end{subfigure}
	\caption{Absolute errors of trained neural networks $\lvert \myU(x,y) - x\cdot y\rvert$ averaged over five training runs starting from initial $\theta$. The captions denote the applied strategy and the average absolute error $\delta$. Shown in red dots are 10 measurements per differential state which have been chosen according to the calculated OED. 
First two rows: Errors on domain $[0,4]^2$ for OED with fixed $u(t)=0$, corresponding to left columns of Table~\ref{tab:criteria_with_controls}.
Bottom rows: as above, but with $u(t) = u^*(t)$, corresponding to right columns of Table~\ref{tab:criteria_with_controls}.. 
    While the topology of uncertainty depends on the sampling $w$ and the chosen dimension reduction (first two rows), an additional excitation $u^*$ of the system (bottom rows) leads to increased accuracy.} 
	\label{fig:absolute_errors}
\end{figure}

\clearpage
\subsubsection{Concurrent training of ANN weights and estimation of model parameters} 

We now consider the situation where both model parameters $\hat p$ and ANN weights $\theta$ shall be inferred from experimental data.
We consider the case in which the parameter $\hat p_2$ is uncertain in addition to the weights of the ANN $U(x,\theta)$ studied in the previous section.

In Figure~\ref{fig_mixed_sensitivities} selected corresponding sensitivities are plotted, illustrating the relation between lumping and singular value decomposition and the impact of including the mechanistic model parameter $\hat{p}_2$ in the SVD.
\begin{figure}[h!!!]
	\centering
  \includegraphics[width=0.35\linewidth]{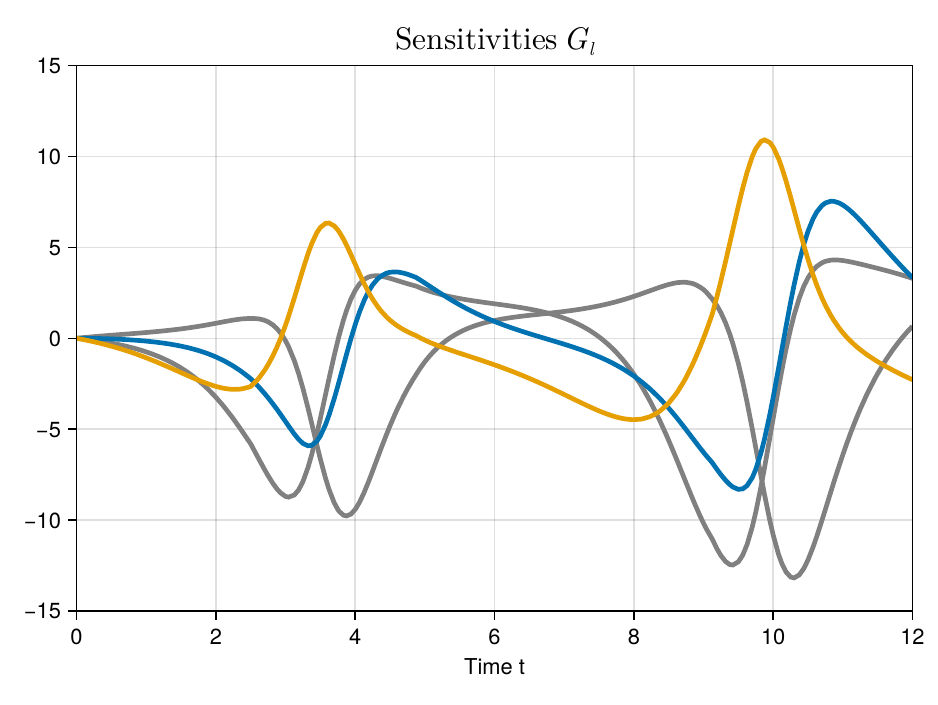}
  \includegraphics[width=0.35\linewidth]{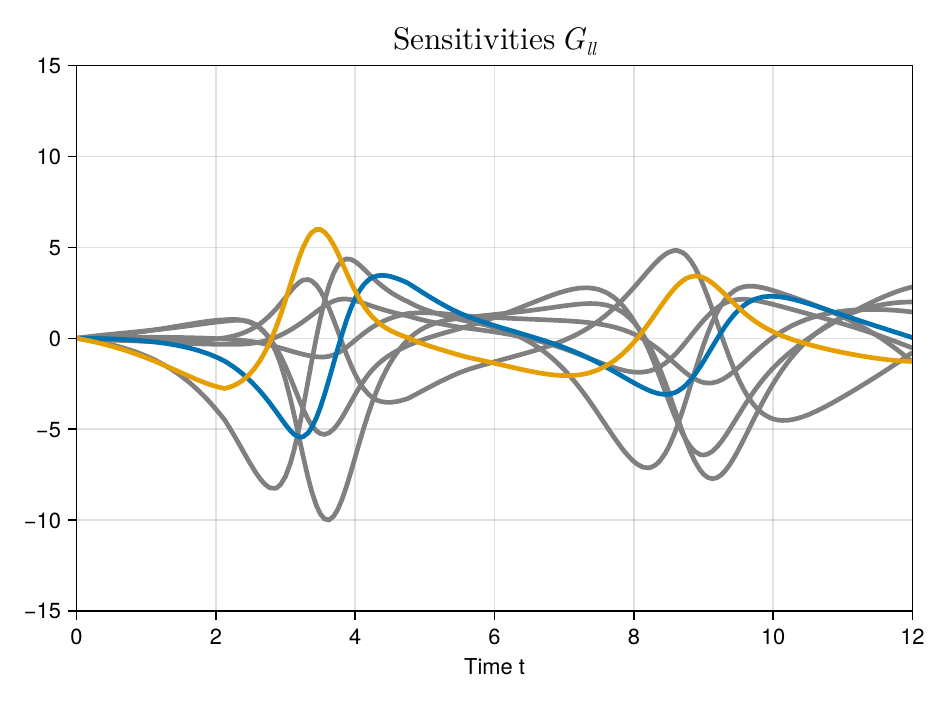}

  \includegraphics[width=0.35\linewidth]{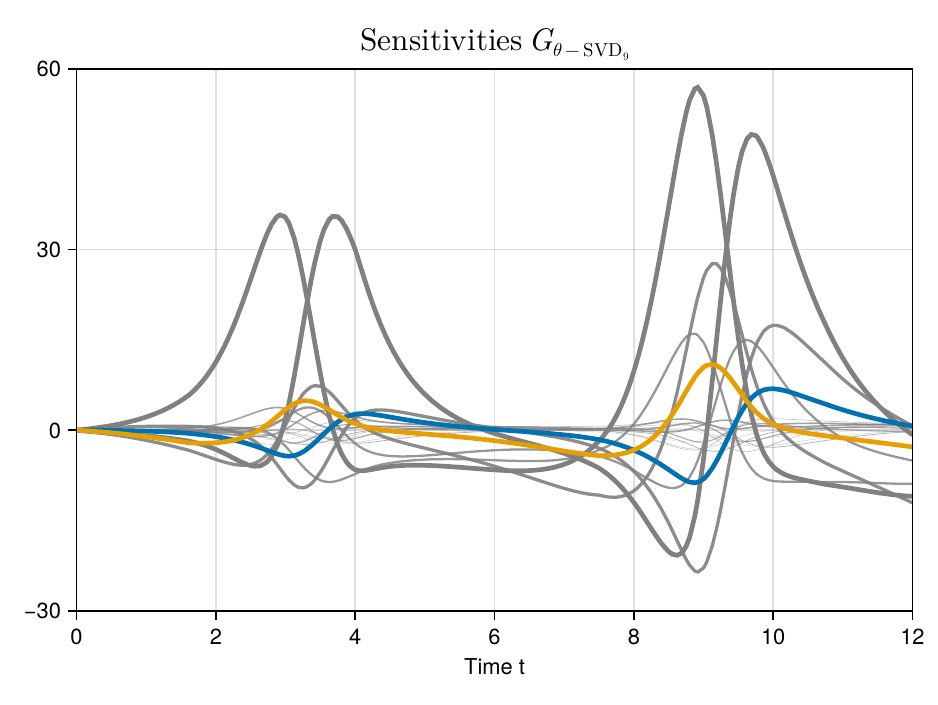}
  \includegraphics[width=0.35\linewidth]{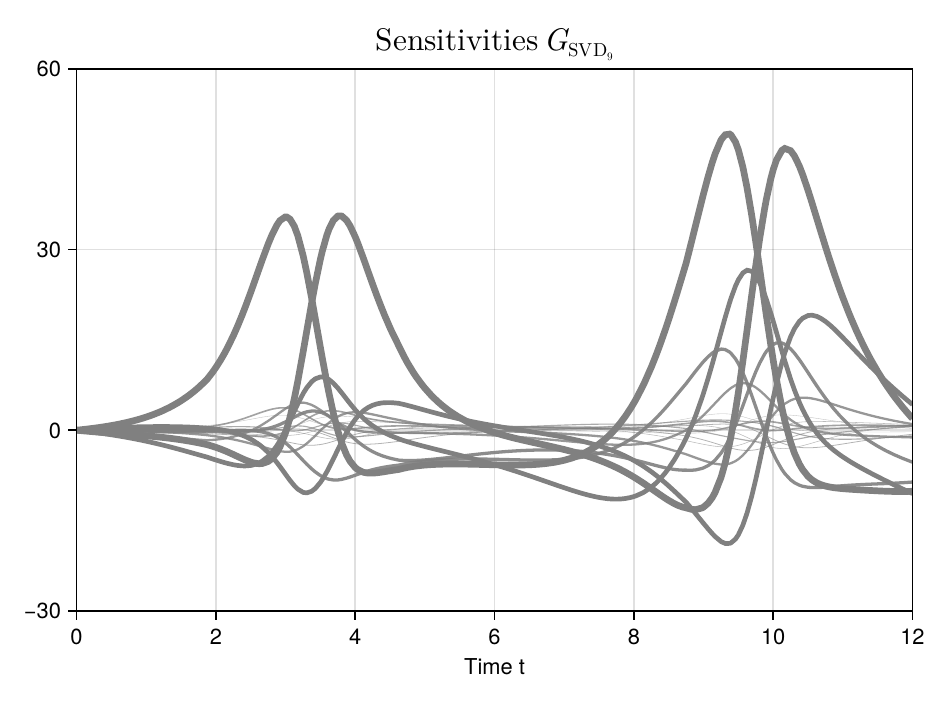}
	\caption{Comparison of sensitivities for the dimension reduction approaches $a \in \{l, ll, \psvdu{10}, \svdu{10} \}$ for the Lotka-Volterra UDE. The line width of trajectories indicates the size of the corresponding singular values. The two sensitivities $\frac{\mathrm{d} x}{\mathrm{d} \hat p_2}$ are shown in color.}
	\label{fig_mixed_sensitivities}
\end{figure}

Table~\ref{tab:criteria_with_controls_concurrent} shows the objective function values of the different OED approaches.
\begin{table}[h!!!]
	\centering
	\begin{tabular}{lr|lr|lr}
	 	Approach 			            & $\phi_{A,\svdu{10}}(w,u)$ & Approach 			  & $\phi_{A,\svdu{10}}(w,u)$ & Approach     & $\phi_{A,\svdu{10}}(w,u)$ \\ \toprule 
		$w^*$-$u^*$-$o$		       	&  4.711e-02 &	$w^*$-$u^*$-$l$          	&  5.727e-02 &  $w^*$-$u^*$-$ll$	       	&  1.119e-01\\ \hline
		$w^*$-$u^*$-$\svdu{1}$   	&  4.323e-02 &	$w^*$-$u^*$-$\psvdu{1}$  	&  5.473e-02 &	$w^*$-$u^*$-$\tsvdu{1}$  	&  4.394e-02 \\
		$w^*$-$u^*$-$\svdu{2}$   	&  4.497e-02 &	$w^*$-$u^*$-$\psvdu{2}$  	&  5.206e-02 &	$w^*$-$u^*$-$\tsvdu{2}$  	&  4.945e-02 \\ 
		$w^*$-$u^*$-$\svdu{3}$   	&  6.451e-02 &	$w^*$-$u^*$-$\psvdu{3}$  	&  5.253e-02 &	$w^*$-$u^*$-$\tsvdu{3}$  	&  4.806e-02 \\
		$w^*$-$u^*$-$\svdu{4}$   	&  4.721e-02 &	$w^*$-$u^*$-$\psvdu{4}$  	&  4.871e-02 &	$w^*$-$u^*$-$\tsvdu{4}$  	&  4.710e-02\\
		$w^*$-$u^*$-$\svdu{5}$   	&  4.755e-02 &	$w^*$-$u^*$-$\psvdu{5}$  	&  4.716e-02 &	$w^*$-$u^*$-$\tsvdu{5}$  	&  4.337e-02\\
		$w^*$-$u^*$-$\svdu{6}$   	&  5.867e-02 &	$w^*$-$u^*$-$\psvdu{6}$  	&  4.381e-02 &	$w^*$-$u^*$-$\tsvdu{6}$  	&  4.293e-02\\
		$w^*$-$u^*$-$\svdu{7}$   	&  4.083e-02 &	$w^*$-$u^*$-$\psvdu{7}$  	&  4.518e-02 &	$w^*$-$u^*$-$\tsvdu{7}$  	&  4.348e-02\\
		$w^*$-$u^*$-$\svdu{8}$   	&  3.769e-02 &	$w^*$-$u^*$-$\psvdu{8}$  	&  4.454e-02 &	$w^*$-$u^*$-$\tsvdu{8}$  	&  4.348e-02\\ 
		$w^*$-$u^*$-$\svdu{9}$   	&  3.723e-02 &	$w^*$-$u^*$-$\psvdu{9}$  	&  4.571e-02 &	$w^*$-$u^*$-$\tsvdu{9}$  	&  4.348e-02\\ 
		$w^*$-$u^*$-$\svdu{10}$  	&  3.705e-02 &	$w^*$-$u^*$-$\psvdu{10}$ 	&  4.682e-02 &	$w^*$-$u^*$-$\tsvdu{10}$ 	&  4.327e-02\\
		\bottomrule
	\end{tabular}
	\caption{Objective function values corresponding to different approaches $w^*$-$u^*$-$a$. The evaluation of the objective function was done for the largest singular value by simulation of optimal solutions $(w^*, u^*)$. As expected, the objective function values improve when more singular values are considered. Interestingly, the lumping approaches $w^*$-$u^*$-$o$ and $w^*$-$u^*$-$l$ perform well when compared to SVD approaches. No clear improvements for $a=\psvdu{n_s}$ or $a=\tsvdu{n_s}$ when compared to $a=\svdu{n_s}$ are apparent.}
	\label{tab:criteria_with_controls_concurrent}
\end{table}
We finish this section by looking at the behavior of optimal samplings $w^*$ in dependence of the SVD truncation index $n_s$. 
Figure~\ref{fig_gig_lotka} shows the distribution of the ten largest singular values $\lambda_i$, the difference between optimal samplings for increasing number $n_s$ of considered singular values, and the convergence of the functions $\Gamma^{D,i}_{n_s}(t)$ defined in \eqref{eq_GammaDSVD} that explain this behavior.
\begin{figure}[h!!!]
	\centering
  \includegraphics[width=0.48\linewidth]{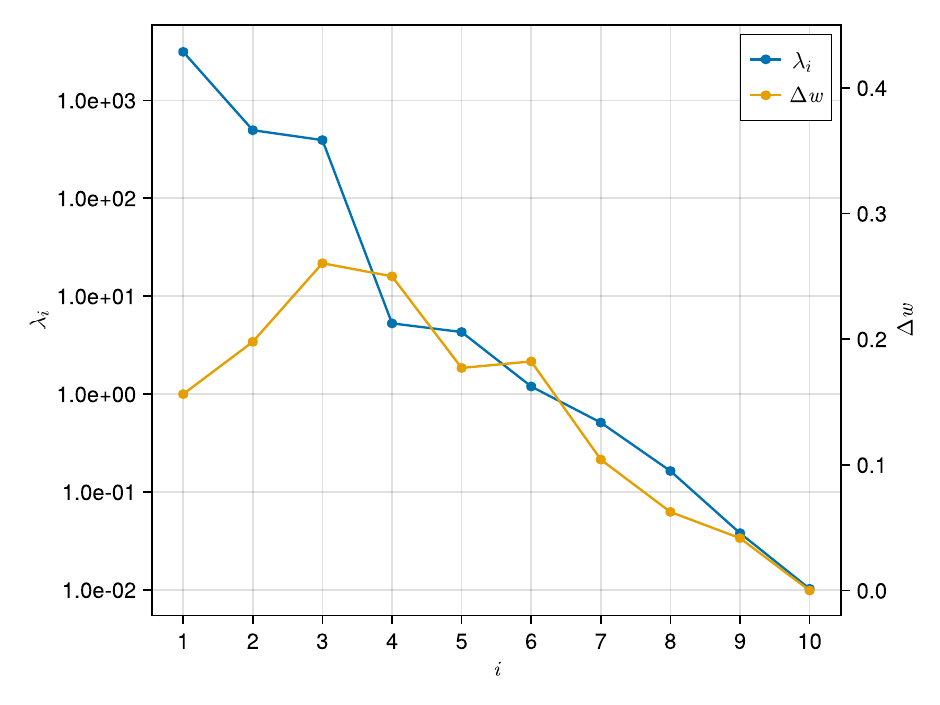}
  \includegraphics[width=0.48\linewidth]{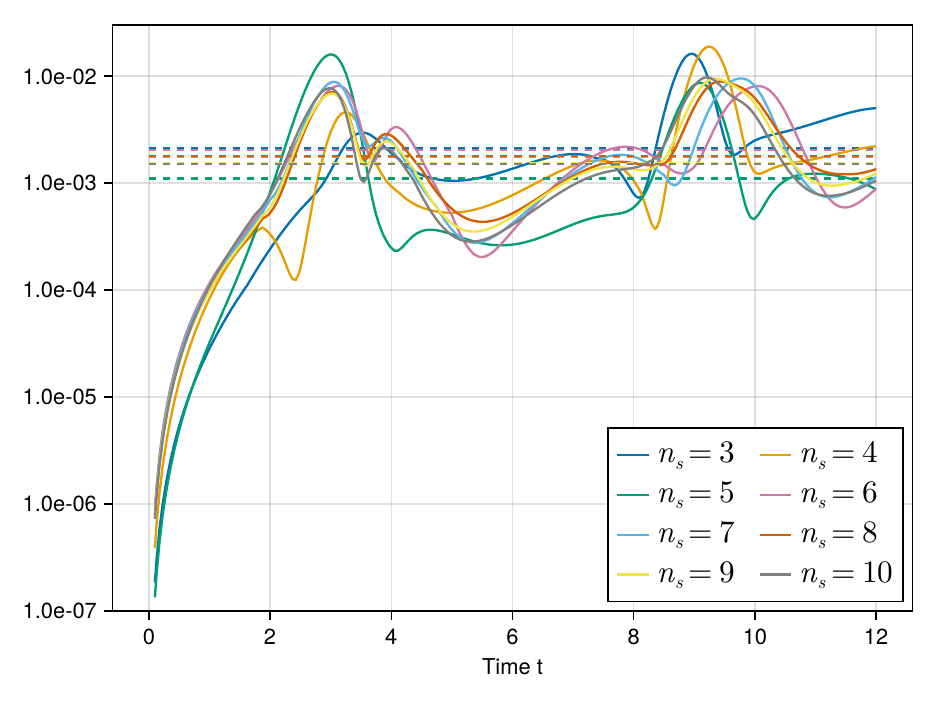}
	\caption{Left: distribution of singular values $\lambda_i$ of the SVD of the Lotka-Volterra UDE in blue on a logarithmic scale. The values $\Delta w := \frac{1}{N} \sum_{j=1}^{N} \lvert  w^*_j - w^*_{\svdu{10},j} \rvert$ with $N$ as the number of equidistant discretization intervals indicate how much the calculated optimal samplings for the problem with $n_s$ singular values differs from the solution for $n_t=10$ singular values. Right: functions $\Gamma^{D,i}_{n_s}(t)$ as defined in \eqref{eq_GammaDSVD} in solid and scaled Lagrange multipliers $\gamma^{D}_{n_s} \mu_i$ for $i=1$. The trajectories and multipliers get closer as $n_s \rightarrow n_t$, explaining via Lemma~\ref{lem_convergence} why also the optimal samplings converge.}
	\label{fig_gig_lotka}
\end{figure}

\subsection{Urethane reaction}

We consider the chemical reaction of urethane investigated in \cite{Koerkel2002,Koerkel2004}. The reaction is given by the scheme 

\begin{align}
	\begin{split}
		A + B &\rightarrow C\\
		A + C & \rightleftarrows D\\
		3 A & \rightarrow E
	\end{split}
\end{align}
with educts isocyanate $A$, butanol $B$, value product urethane $C$, downstream product allophanate $D$, by-product isocyanurate $E$ and solvent dimethylsulfoxide $L$.
It can be described 
%
by the system of ODEs
\begin{align}
	\begin{split}
		\dot n_C &= V \cdot \left( r_1 - r_2 + r_3\right) \\
		\dot n_D &= V \cdot \left( r_2 - r_3\right) \\
		\dot n_E &= V \cdot \left( r_4 \right) \\
		\dot n_A &= V \cdot (-r_1 - r_2 + r_3 - 3r_4) + u_1(t) \\
		\dot n_B &= V \cdot (- r_1) + u_2(t) \\
		\dot n_L &= u_1(t) + u_2(t),\\
		\dot T(t) 			&= u_T(t).
	\end{split} \label{eq_urethane}
\end{align}
The reaction rates $r_i$ are given by  
$r_1 = k_1 \frac{n_A}{V} \frac{n_B}{V}$,
$r_2 = k_1 \frac{n_A}{V} \frac{n_C}{V}$,
$r_3 = k_3 \frac{n_D}{V}$, and
$r_4 = k_4 \left(\frac{n_l}{V}\right)^2$, respectively,
%
%
using the reaction volume
\begin{align*}
	V &= \frac{n_A \cdot M_A}{\rho_A} + \frac{n_B \cdot M_B}{\rho_B} + \frac{n_C \cdot M_C}{\rho_C} +\frac{n_D \cdot M_D}{\rho_D} +\frac{n_E \cdot M_E}{\rho_E} +\frac{n_L \cdot M_L}{\rho_L},
\end{align*}
and Arrhenius equations for the dependency of the reaction rate on the temperature $T$
\begin{align*}
	\begin{split}
		k_1 &= k_{ref1} \cdot \exp \left( -\frac{E_{a1}}{R} \cdot \left( \frac{1}{T} - \frac{1}{T_{ref}}\right)\right), \\
		k_2 &= k_{ref2} \cdot \exp \left( -\frac{E_{a2}}{R} \cdot \left( \frac{1}{T} - \frac{1}{T_{ref}} \right)\right), \\
		k_4 &= k_{ref4} \cdot \exp \left( -\frac{E_{a4}}{R} \cdot \left( \frac{1}{T} - \frac{1}{T_{ref}}\right)\right), \\
		k_C &= k_{C2} 	\cdot \exp \left( -\frac{\Delta H_2}{R} \cdot \left( \frac{1}{T} - \frac{1}{T_{ref}}\right)\right), \\
		k_3 &= \frac{k_2}{k_c}
	\end{split}
\end{align*}
with steric factors $k_{ref1},k_{ref2},k_{ref4}$, activation energies $E_{a1}, E_{a2}, E_{a4}$, equilibrium constant $K_{C2}$, reaction enthalpy $\Delta H_2$, and reference reaction temperature $T_{ref}$ as parameters. The initial values of the products are set to zero, i.e.,
\begin{align*}
	n_C(t_0) = n_D(t_0) = n_E(t_0) = 0.
\end{align*}
Time-independent control values are given by the initial values of the educts
\begin{align*}
	n_A(t_0) = n_{A0}, n_B(t_0)=n_{B0}, n_L(t_0)=n_{L0}.
\end{align*}
Furthermore, the control functions $u_1(t), u_2(t), u_T(t), \ t \in [t_0, t_f]$ influence the evolution of the two feeds 
and the temperature $T(t)$, respectively. We assume that the reaction rate for the reaction of isocyanate to isocyanurate is unknown and needs to be learned from experimental data and thus replace $r_4$ by an ANN. Moreover, we assume that the steric factor $k_{ref_1}$ and the activation energy $E_{a1}$ involved in the reaction rate $r_1$ are uncertain parameters $\hat p$. Hence, we solve the problem \eqref{OED} for the concurrent estimation of parameters $k_{ref_1}$ and $E_{a1}$ and the weights $\theta$ with the hybrid model
\begin{align}
	\label{neural_urethan}
	\begin{split}
		\dot n_C &= V \cdot \left( r_1 - r_2 + r_3\right) \\
		\dot n_D &= V \cdot \left( r_2 - r_3\right) \\
		\dot n_E &= V \cdot \left( \myU((n_C, n_D, n_E, n_A, n_B, n_L), \theta) \right) \\
		\dot n_A &= V \cdot (-r_1 - r_2 + r_3 - 3\myU((n_C, n_D, n_E, n_A, n_B, n_L), \theta)) + u_1(t) \\
		\dot n_B &= V \cdot (- r_1) + u_2(t) \\
		\dot n_L &= u_1(t) + u_2(t),\\
		\dot T(t) 			&= u_T(t).
	\end{split}
\end{align}
which we consider on the time horizon $\mathcal T = [0, 80h]$. To generate experimental data three measurement functions are available. They are given by the mass percentages of the species isocyanate, urethane and isocyanurate, i.e.,
\begin{align*}
	\begin{split}
		h^1((n_C, n_D, n_E, n_A, n_B, n_L)) = \frac{100 \cdot n_A M_A}{\sum_{i \in \{A, B, C, D, E, L\}} n_i M_i}, \\		
		h^2((n_C, n_D, n_E, n_A, n_B, n_L)) = \frac{100 \cdot n_C M_C}{\sum_{i \in \{A, B, C, D, E, L\}} n_i M_i}, \\		
		h^3((n_C, n_D, n_E, n_A, n_B, n_L)) = \frac{100 \cdot n_E M_E}{\sum_{i \in \{A, B, C, D, E, L\}} n_i M_i}.		
	\end{split}
\end{align*}
 
For illustration purposes we restrict ourselves in the problem \eqref{OED} to the calculation of optimal feeds $u_1(t), u_2(t)$ with fixed initial values $n_{A0} = 0.1 \,\textrm{mol}, n_{B0} = 0.05 \,\textrm{mol}, n_{L0} = 0.0 \,\textrm{mol}$, and fixed temperature profile given by a linear increase from $T(0 h) = 300 K$ to $T(80 h)=400K$.

We focus directly on the interesting case of a concurrent estimation of model parameters and ANN weights.
Figure~\ref{fig_solution_urethan} shows parts of an optimal solution of \eqref{OED} for $w^*$-$u^*$-$\svdu{10}$.
The optimal controls $u^*$ are of bang-bang type and bring the system into a different area in state space. 
\begin{figure}[h!!!]
	\centering
  \includegraphics[width=0.48\linewidth]{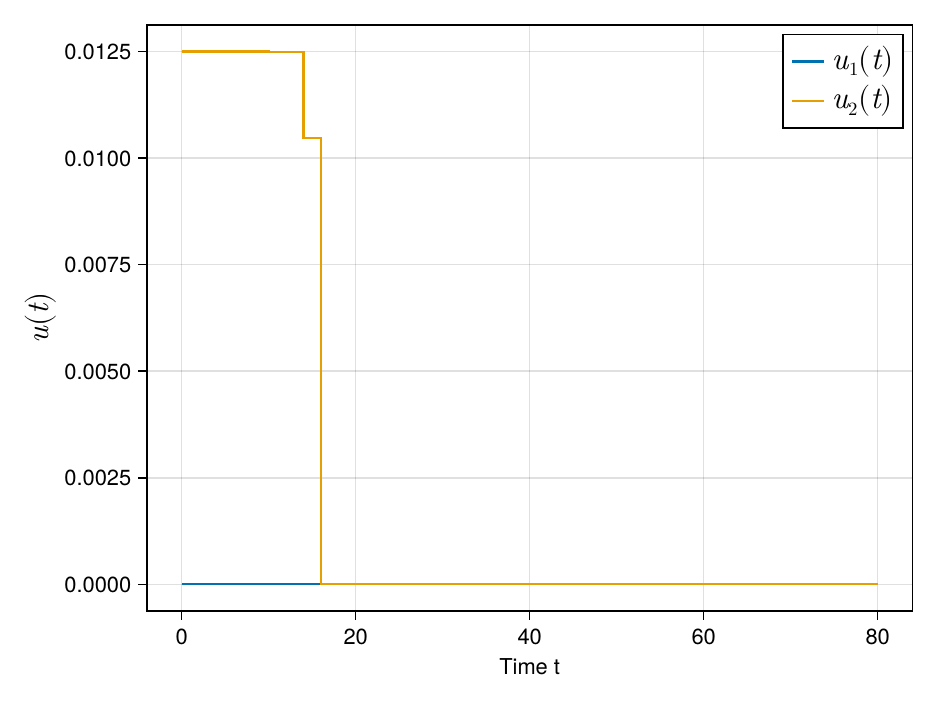}
  \includegraphics[width=0.48\linewidth]{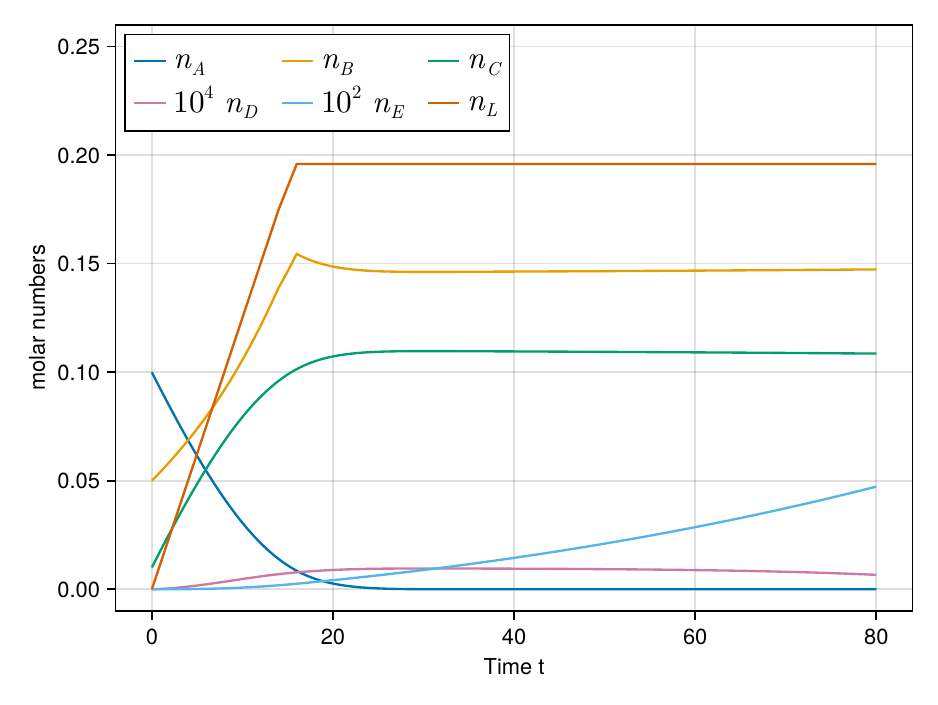}
	\caption{Optimal controls and differential states for $w^*$-$u^*$-$\svdu{10}$.}
	\label{fig_solution_urethan}
\end{figure}

The sensitivities for different approaches are shown in Figure~\ref{fig_mixed_sensitivities_urethan}.
\begin{figure}[h!!!]
	\centering
  \includegraphics[width=0.32\linewidth]{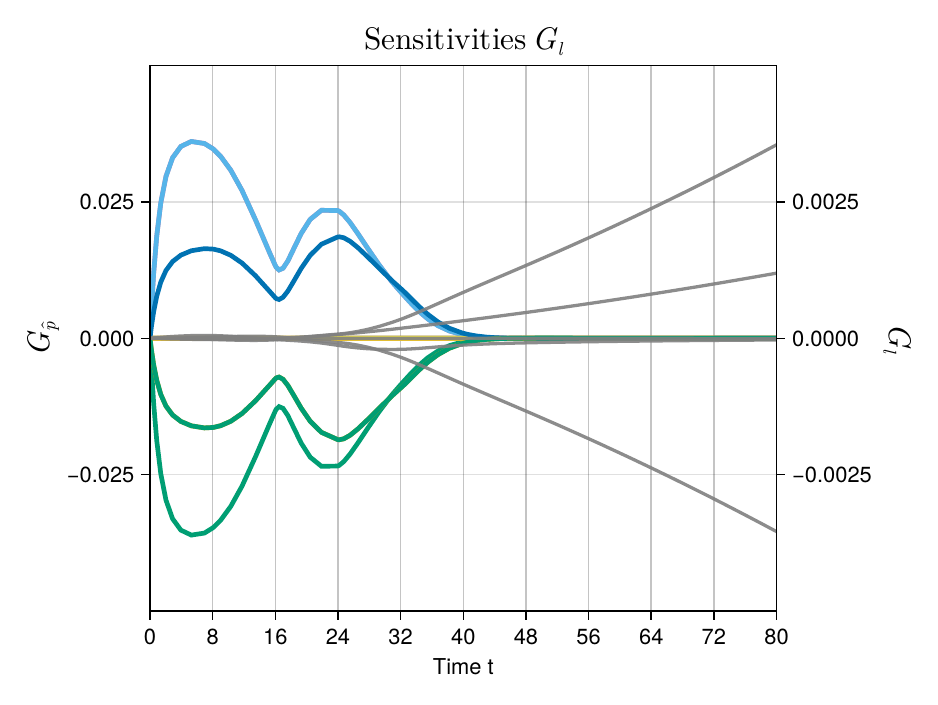}
  \includegraphics[width=0.32\linewidth]{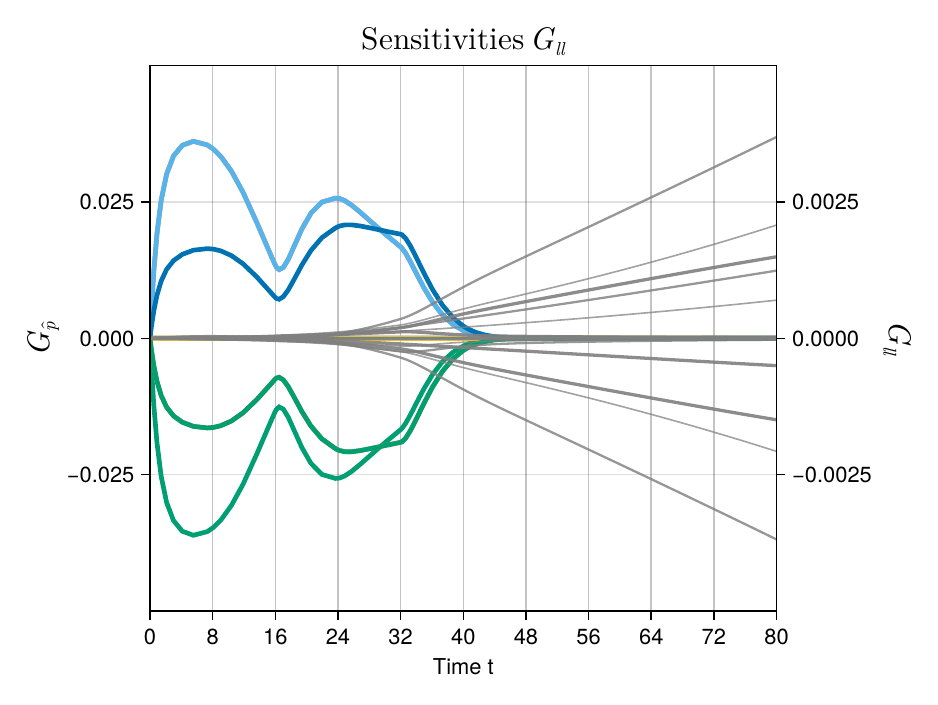}
  \includegraphics[width=0.32\linewidth]{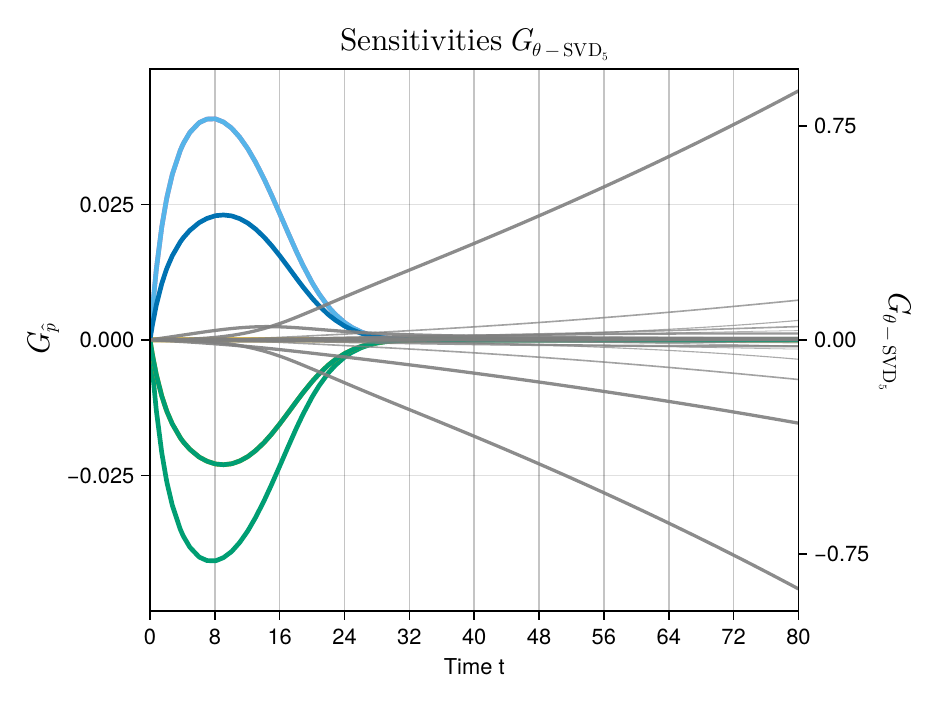}

	\caption{Visualization of the sensitivities $G_l$, $G_{ll}$, and $G_\tsvdu{5}$. The thickness of lines corresponds to the value of singular values. One observes that $G_l$ and $G_{ll}$ have a qualitatively similar behavior compared to $G_\tsvdu{5}$ for the sensitivities with respect to ANN weights (gray), while the sensitivities with respect to model parameters $\hat{p}$ (highlighted in color) are different.}
	\label{fig_mixed_sensitivities_urethan}
\end{figure}
Here one can observe that the sensitivities of the differential states with respect to $\hat{p} = (k_{ref_1}, E_{a1})$ are very small for almost all states. The four visible sensitivities differ, most apparently for the approach $a=\tsvdu{5}$, because different optimal controls $u^*$ were calculated that influence $x$ and hence also $G$. 

Table~\ref{tab:urethan_criteria_with_controls_concurrent} shows the OED objective function values for different approaches. Here, the approaches $a \in \{ o, l, ll \}$ do not perform as well as the SVD approaches. 
\begin{table}[h!!!]
	\centering
	\begin{tabular}{lr|lr|lr}
	 	Approach 			            & $\phi_{D,\svdu{10}}(w,u)$ & Approach 			  & $\phi_{D,\svdu{10}}(w,u)$ & Approach     & $\phi_{D,\svdu{10}}(w,u)$ \\ \toprule 
		$w^*$-$u^*$-$o$		       	&  1.201e-02 &	$w^*$-$u^*$-$l$          	&  2.072e-02 &  $w^*$-$u^*$-$ll$	       	&  1.883e-02\\ \hline
		$w^*$-$u^*$-$\svdu{1}$   	&  1.704e-04 &	$w^*$-$u^*$-$\psvdu{1}$  	&  2.299e-04 &	$w^*$-$u^*$-$\tsvdu{1}$  	&  5.146e-03 \\
		$w^*$-$u^*$-$\svdu{2}$   	&  1.728e-04 &	$w^*$-$u^*$-$\psvdu{2}$  	&  1.652e-02 &	$w^*$-$u^*$-$\tsvdu{2}$  	&  1.537e-03 \\ 
		$w^*$-$u^*$-$\svdu{3}$   	&  1.623e-04 &	$w^*$-$u^*$-$\psvdu{3}$  	&  3.255e-03 &	$w^*$-$u^*$-$\tsvdu{3}$  	&  4.763e-02 \\
		$w^*$-$u^*$-$\svdu{4}$   	&  2.688e-03 &	$w^*$-$u^*$-$\psvdu{4}$  	&  1.863e-04 &	$w^*$-$u^*$-$\tsvdu{4}$  	&  2.119e-02\\
		$w^*$-$u^*$-$\svdu{5}$   	&  2.781e-03 &	$w^*$-$u^*$-$\psvdu{5}$  	&  3.179e-03 &	$w^*$-$u^*$-$\tsvdu{5}$  	&  2.647e-04\\
		$w^*$-$u^*$-$\svdu{6}$   	&  1.623e-04 &	$w^*$-$u^*$-$\psvdu{6}$  	&  3.246e-03 &	$w^*$-$u^*$-$\tsvdu{6}$  	&  2.260e-04\\
		$w^*$-$u^*$-$\svdu{7}$   	&  2.779e-03 &	$w^*$-$u^*$-$\psvdu{7}$  	&  3.041e-03 &	$w^*$-$u^*$-$\tsvdu{7}$  	&  2.049e-04\\
		$w^*$-$u^*$-$\svdu{8}$   	&  2.913e-03 &	$w^*$-$u^*$-$\psvdu{8}$  	&  4.019e-03 &	$w^*$-$u^*$-$\tsvdu{8}$  	&  2.047e-04\\ 
		$w^*$-$u^*$-$\svdu{9}$   	&  2.788e-03 &	$w^*$-$u^*$-$\psvdu{9}$  	&  4.000e-03 &	$w^*$-$u^*$-$\tsvdu{9}$  	&  1.895e-04\\ 
		$w^*$-$u^*$-$\svdu{10}$  	&  2.680e-03 &	$w^*$-$u^*$-$\psvdu{10}$ 	&  2.799e-03 &	$w^*$-$u^*$-$\tsvdu{10}$ 	&  9.720e-03\\
			\midrule
	\end{tabular}
	\caption{Same setting as in Table~\ref{tab:criteria_with_controls_concurrent}, but for the urethane problem \eqref{neural_urethan}.}
	\label{tab:urethan_criteria_with_controls_concurrent}
\end{table}
The non-monotonicity with respect to increasing numbers $n_s$ of singular values indicates the effect of noise on the results. 
But also local optimality plays a role. Application of the local NLP solver \texttt{ipopt} to the discretized, nonconvex optimal control problem \eqref{OED} for \eqref{neural_urethan} only provides candidates for locally optimal solutions. For $a=\svdu{n_s}$ and $n_s \in \{1, 2, 3, 6\}$ slightly different (better) local optima were found.

Finally, Figure~\ref{fig_gig_urethan} visualizes the distribution of singular values, the Hamming distance of optimal samplings, and the scaled information gain functions resulting from the analysis of the necessary conditions of optimality for optimal samplings. As in the previous section, it can be observed that functions and multipliers are more and more similar for $n_s \rightarrow n_t$, with the exception of the outliers $n_s=3$ and $n_s=6$ due to local optima.
\begin{figure}[t!!!]
	\centering
  \includegraphics[width=0.4\linewidth]{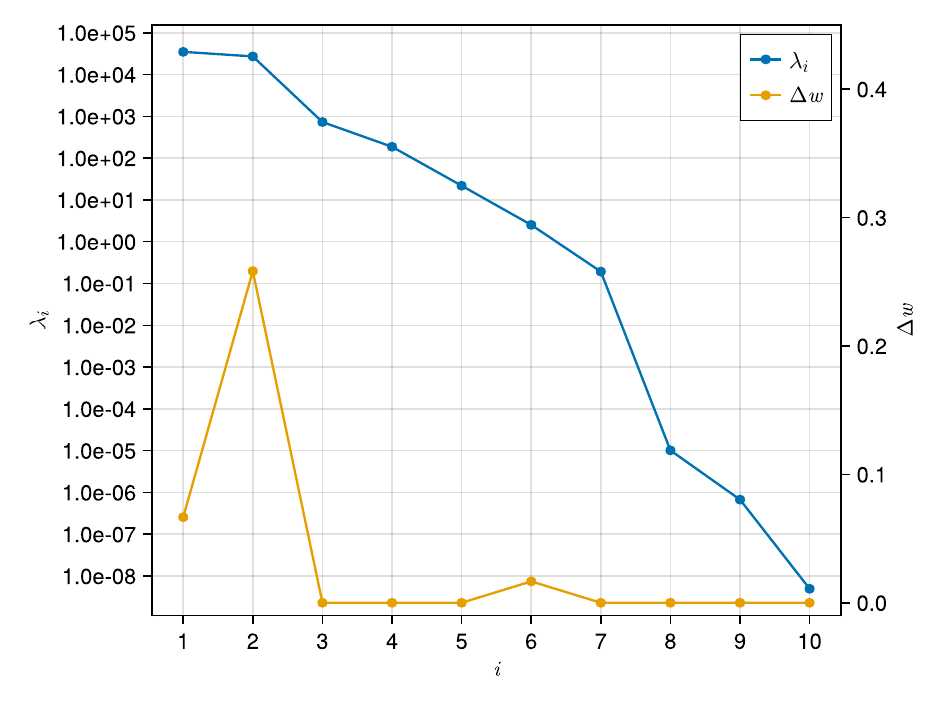}
  \includegraphics[width=0.4\linewidth]{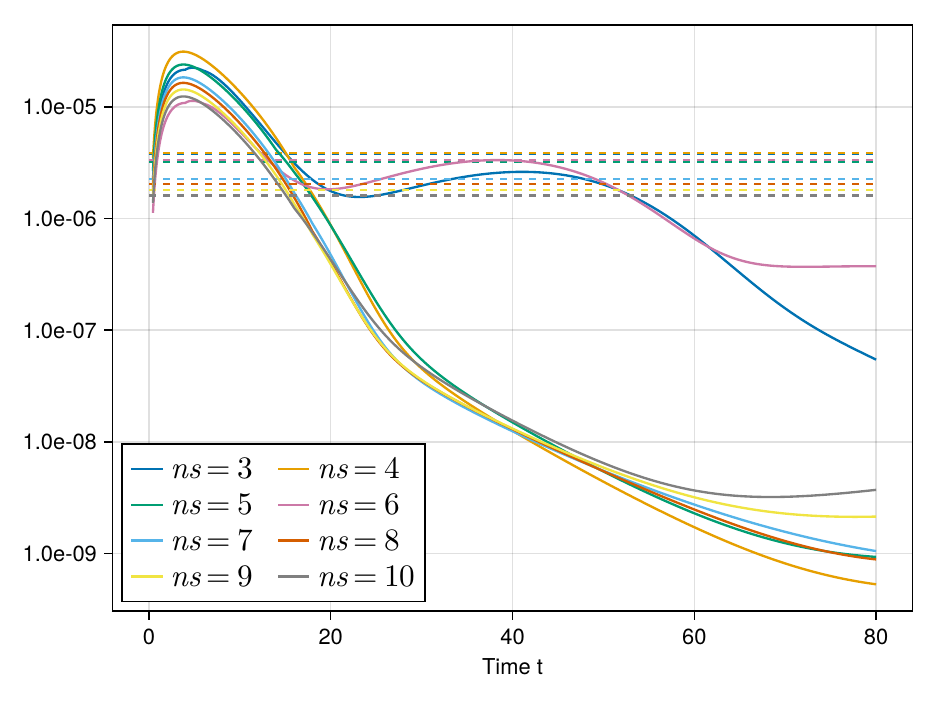}

  \includegraphics[width=0.4\linewidth]{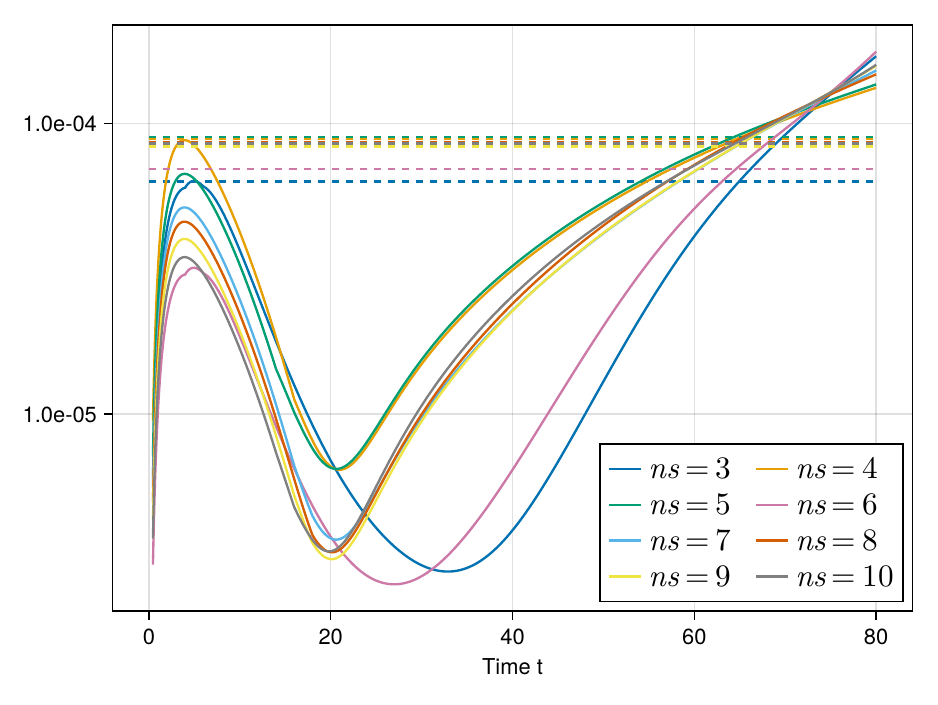}
  \includegraphics[width=0.4\linewidth]{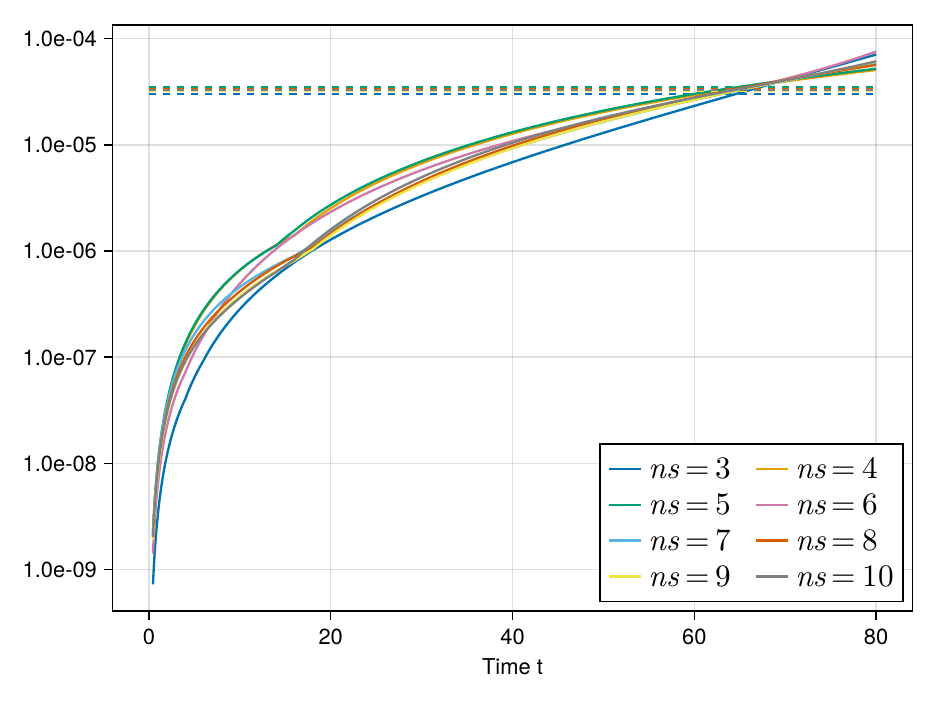}
	\caption{Same setting as in Table~\ref{tab:criteria_with_controls_concurrent}, but for the urethane problem \eqref{neural_urethan}. The top left plot shows the distribution of singular values and the Hamming distance of optimal samplings for different values of $n_s$ from the one for $n_t=10$. The other plots show the functions $\Gamma^{D,i}_{n_s}(t)$ as defined in \eqref{eq_GammaDSVD} in solid and scaled Lagrange multipliers $\gamma^{D}_{n_s} \mu_i$ for $i \in \{1,2,3\}$. The convergence of Lemma~\ref{lem_convergence} is visible, with the exception of local optima outliers $n_s=3$ and $n_s=6$.}
	\label{fig_gig_urethan}
\end{figure}

\section{Discussion and outlook} \label{sec:discussion}

We discussed optimal experimental design for universal differential equations. In particular, we compared a novel sensitivity lumping approach as an alternative to the established truncated SVD method. It uses the trick of an artificial scaling parameter with value 1.0 to derive a simple multiplication formula for the dimension reduction of the sensitivity equations that uses the current ANN weight estimate. 

By implementing all algorithms based on the julia package \texttt{dynamicOED.jl} we created a modular platform that allows to solve challenging OED problems and compare novel algorithmic ideas. Active learning for hybrid problems can thus be treated by efficient numerical algorithms for optimal control. 

Part of this comparison is the analysis of global information gain, which yields necessary conditions of optimality for the sampling decisions $w$. We motivated that this could be used in the future as a criterion for deciding the truncation point of SVD, for determining when to update SVD decompositions during an optimization algorithm, or to derive penalizations for the training of ANN weights resulting in better sampling decisions.

In summary, lumping approaches might become a feasible alternative to established reduction approaches for OED of UDE problems, especially when singular value decompositions become computationally expensive due to the size of the involved ANN.

\paragraph{Acknowledgements} 
This project has received funding from the European Regional Development Fund (grants timingMatters and IntelAlgen) under the European Union's Horizon Europe Research and Innovation Program, from the German Research Foundation DFG within GRK 2297 'Mathematical Complexity Reduction' and priority program 2331 ’Machine Learning in Chemical Engineering’ under grants KI 417/9-1, SA 2016/3-1, SE 586/25-1, which we gratefully acknowledge.

\bibliographystyle{plain}
\bibliography{literature,agsager}

\appendix

%

\end{document}